\theoremstyle{thmstyleone}%
\newtheorem{theorem}{Theorem}[section]
\newtheorem{proposition}[theorem]{Proposition}%
\newtheorem{lemma}[theorem]{Lemma} 
\newtheorem{cor}[theorem]{Corollary} 
\newtheorem{fact}[theorem]{Fact}  
\theoremstyle{thmstyletwo}%
\newtheorem{example}[theorem]{Example}%
\theoremstyle{thmstylethree}%
\newtheorem{definition}[theorem]{Definition}%
\newcommand{\leqnomode}{\tagsleft@true}
\newcommand{\reqnomode}{\tagsleft@false}
\tikzset{
modal/.style={>=stealth,shorten >=1pt,shorten <=1pt,auto,
node distance=1.5cm,semithick},
world/.style={circle,draw,minimum size=1cm,fill=gray!15},
point/.style={circle,draw,fill=black,inner sep=0.5mm},
reflexive/.style={->,in=120,out=60,loop,looseness=#1},
reflexive/.default={5},
reflexive point/.style={->,in=135,out=45,loop,looseness=#1},
reflexive point/.default={25},
coil/.style={decorate, decoration={coil,amplitude=4pt,segment length=5pt}},
snake/.style={decorate, decoration={snake}},
zigzag/.style={decorate, decoration={zigzag}}
}
\definecolor{Graydark}{gray}{0.75}
\definecolor{Graylight}{gray}{0.90}
\let\DotDiamond=\trianglepbdot
\let\triangledown=\trianglepb
\newcommand{\ML}{{\mathcal{ML}}}
\begin{document}

\title[Article Title]{Axiomatizing modal inclusion logic and its variants}


\author[1]{\fnm{Aleksi} \sur{Anttila}}

\author[2]{\fnm{Matilda} \sur{Häggblom}
}

\author[3]{\fnm{Fan} \sur{Yang}}


\affil[1]{\orgdiv{Institute for Logic, Language and Computation}, \orgname{University of Amsterdam},  \country{The Netherlands}}

\affil[2]{\orgdiv{Department of Mathematics and Statistics}, \orgname{University of Helsinki}, \country{Finland}}

\affil[3]{\orgdiv{Department of Philosophy and Religious Studies}, \orgname{Utrecht University},  \country{The Netherlands}}

\abstract{We provide a complete axiomatization of modal inclusion logic---team-based modal logic extended with inclusion atoms. We review and refine an expressive completeness and normal form theorem for the logic, define a natural deduction proof system, and use the normal form to prove completeness of the axiomatization. Complete axiomatizations are also provided for two other extensions of modal logic with the same expressive power as modal inclusion logic: one augmented with a might operator and the other with a single-world variant of the might operator.}

\keywords{Dependence logic, Team Semantics, Modal logic, Inclusion logic}


\pacs[MSC Classification]{03B60}

\maketitle

\bmhead{Acknowledgments}
This research was supported by grant 336283 of Academy of Finland. The first author's research was conducted while he was affiliated with the Department of Mathematics and Statistics, University of Helsinki, Finland, where he also received funding from the European Research Council (ERC) under the European Union's Horizon 2020 research and innovation programme (grant agreement No 101020762). The second author was supported additionally by the Vilho, Yrjö and Kalle Väisälä Foundation. 

\bmhead{Corresponding author} Matilda Häggblom: matilda.haggblom@helsinki.fi.

\section{Introduction}\label{sec1}

In this article, we axiomatize modal inclusion logic and two other expressively equivalent logics. Modal inclusion logic extends the usual modal logic with \emph{inclusion atoms}---non-classical atoms the interpretation of which requires the use of \emph{team semantics}. In team semantics---introduced by Hodges \cite{hodges1997,hodges19972} to provide a compositional semantics for Hintikka and Sandu's \emph{independence-friendly logic} \cite{HinSan1989,HinBook}, and developed further by V\"{a}\"{a}n\"{a}nen in his work on \emph{dependence logic} \cite{vaananen2007}---formulas are interpreted with respect to sets of evaluation points called \emph{teams}, as opposed to single evaluation points. In Hodges' and Väänänen's first-order setting, teams are sets of variable assignments; we will mainly work in \emph{modal team semantics} (first considered in \cite{vaananen2008}), in which teams are sets of possible worlds in a Kripke model. The shift to teams enables one to express that certain relationships hold between the truth/assignment values obtained by formulas/variables in the worlds/assignments in a team, rendering team-based logics generally more expressive than their singularly evaluated counterparts. The articulation of these relationships is typically accomplished by furnishing team-based logics with various \emph{atoms of dependency} expressing different kinds of relationships. The first such atoms to be considered were Väänänen's \emph{dependence atoms} \cite{vaananen2007,vaananen2008}. Another example of note are Grädel and Väänänen's \emph{independence atoms} \cite{gradel2013}.

Inclusion atoms were introduced by Galliani in \cite{galliani2012} to import the notion of \emph{inclusion dependencies} from database theory (see, e.g., \cite{casanova1984}) into team semantics. In the modal setting, an inclusion atom is a formula of the form $\mathsf{a}\subseteq\textsf{b}$, where $\mathsf{a}$ and $\textsf{b}$ are finite sequences of formulas of modal logic of the same length. The atom $\mathsf{a}\subseteq\textsf{b}$ is satisfied in a team (i.e., a set of possible worlds) if any sequence of truth values assigned to the formulas in $\textsf{a}$ by some world in the team is also assigned to the sequence $\textsf{b}$ by some world in the team. Consider a database in which various facts about the stores in some town have been collected. Each row (world) of the database contains data about a single store, and each column (propositional symbol) corresponds to a category of collected data such as whether a store sells flowers: if a store sells
flowers, then the row for the store has a $1$ in the $\tt{Sell{\_}flowers}$ column.
If the team consisting of all rows corresponding to stores in some neighbourhood satisfies the atom $\tt{Sell{\_}flowers}\subseteq\tt{Sell{\_}mulch}$, we know that if there is a store in the neighbourhood that sells flowers, there is one that sells mulch (and that if there is one that does not sell flowers, there is one that does not sell mulch). The atom $\top\bot\subseteq\tt{Sell{\_}flowers}\,\tt{Sell{\_}food}$, on the other hand, would express that there is a store that sells flowers but not food.

Our main focus in this article is on modal logic extended with inclusion atoms, or \emph{modal inclusion logic}, which we will denote by $\ML(\subseteq)$. The expressive power of $\ML(\subseteq)$ has been studied in \cite{hella2015,kontinen2015} and its complexity in \cite{hella2019,hella20192}, but the logic has so far not been axiomatized. We fill this gap in the literature by providing a sound and complete natural deduction system for the logic. Our system is an extension of the system for propositional inclusion logic introduced in \cite{yang2022}.

In addition to $\ML(\subseteq)$, we axiomatize two other logics with the same expressive power: $\ML(\triangledown)$ and $\ML(\DotDiamond)$, or modal logic extended with a \emph{might operator} $\triangledown$ (first considered in the team semantics literature in \cite{hella2015}) and with a \emph{singular might operator} $\DotDiamond$ that we introduce in this article, respectively. The names reflect the fact that similar operators have been used to model the meanings of epistemic possibility modalities such as the ``might'' in ``It might be raining''---see, for instance, \cite{yalcin, veltman}. 

Each of these three logics---$\ML(\subseteq)$, $\ML(\triangledown)$, and $\ML(\DotDiamond)$---is \emph{closed under unions}: if a formula is true in all teams in a nonempty collection of teams, it is true in the team formed by union of the collection. The most well-known team-based logics such as dependence logic are not union closed, but union-closed logics have recently been receiving more attention in the literature (see, e.g., \cite{aloni2022,aloni2023,galliani2013,gradel2016,hella2019,hella20192,hella2015,hoelzel2021,yang2020,yang2022}). Hella and Stumpf proved in \cite{hella2015} that each of $\ML(\subseteq)$ and $\ML(\triangledown)$ is expressively complete for team properties that are closed under unions and bounded bisimulations, and which contain the empty team. We revisit this proof and also show that $\ML(\DotDiamond)$ is complete for this class of properties. These proofs of expressive completeness also yield normal forms for each of the three logics. The normal forms play a crucial role in the completeness proofs for our axiomatizations.

The structure of the article is as follows: in Section \ref{section:preliminaries}, we define the syntax and semantics of modal inclusion logic $\ML(\subseteq)$ and the might-operator logics $\ML(\triangledown)$ and $\ML(\DotDiamond)$, and discuss some basic properties of these logics. 
In Section \ref{section:expressive_power} we prove the expressive completeness results discussed above. In Section \ref{section:axiomatizations}, we introduce natural deduction systems for each of the logics and show that they are complete. We conclude the article and discuss directions for possible further research in Section \ref{section:conclusion}. We provide a translation of $\ML(\subseteq)$ into first-order inclusion logic in the \hyperref[secA1]{Appendix}.


Preliminary versions of some of the results in this article were included in the master's thesis \cite{haggblom} of the second author, which was supervised by the third and the first author.

\section{Preliminaries}\label{a} \label{section:preliminaries}

In this section, we define the syntax and semantics of 
modal inclusion logic $\ML(\subseteq)$ and the two might-operator logics $\ML(\triangledown)$ and $\ML(\DotDiamond)$. We also discuss some basic properties of these logics.



Fix a (countably infinite) set \textsf{Prop} of propositional symbols.

\begin{definition}
The syntax for the usual modal logic ($\ML$) is given by the grammar:
\begin{equation*}
   \alpha ::= p\mid\bot\mid\neg\alpha\mid(\alpha\lor\alpha)\mid(\alpha\land \alpha) \mid\Diamond\alpha\mid\Box\alpha,
\end{equation*}
where $p\in\mathsf{Prop}$. Define $\top:=\neg\bot$. We also call formulas of $\ML$ \emph{classical formulas}. Throughout the article, we reserve the first Greek letters $\alpha$ and $\beta$ for classical formulas.

The syntax for modal inclusion logic ($\ML(\subseteq)$) is given by:
\begin{equation*}
   \phi ::= p\mid\bot\mid(\alpha_1\dots\alpha_n\subseteq\beta_1\dots\beta_n)\mid\neg\alpha\mid(\phi \lor \phi) \mid(\phi \land \phi) \mid 	\Diamond \phi\mid \Box\phi,
\end{equation*}
where $p\in\mathsf{Prop}$, and $\alpha,\alpha_i,\beta_i$ (for all $1\leq i\leq n$) range over classical formulas
. We write $\mathsf{Prop}(\phi)$ for the set of propositional symbols appearing in $\phi$, and $\phi(\mathsf{X})$ if $\mathsf{Prop}(\phi)\subseteq \mathsf{X}\subseteq\mathsf{Prop}$. 
\end{definition}

The above syntax of $\ML(\subseteq)$ deserves some comment. First, we only allow negation to occur  in front of classical formulas. Next, our inclusion atoms $\alpha_1\dots\alpha_n\subseteq\beta_1\dots\beta_n$ with $\alpha_i$ and $\beta_j$ being (possibly complex) classical formulas are known in the literature also as {\em extended inclusion atoms}, and the logic $\ML(\subseteq)$ defined above is sometimes also referred to as {\em extended modal inclusion logic}---in contexts which use this terminology, a distinction is drawn between extended inclusion atoms and inclusion atoms \emph{simpliciter}, the latter referring to atoms which may only contain propositional symbols as subformulas (modal inclusion logic \emph{simpliciter}, in these contexts, is then the variant which only allows these simpler atoms). In this article we only study the extended variant. We do not allow nested inclusion atoms; for example,  $p\subseteq(p\subseteq q)$ is not a formula of $\ML(\subseteq)$. 
We usually omit the parentheses around inclusion atoms, and stipulate higher precedence for the inclusion symbol $\subseteq$ than the other connectives; for instance, $p\land q\subseteq r $ has the subformula $q\subseteq r$. 

Modal inclusion logic is interpreted on standard Kripke models, but we use teams (sets of worlds) rather than single worlds as points of evaluation. A \emph{(Kripke) model} $M=(W,R,V)$ (over $\mathsf{X}\subseteq\mathsf{Prop}$)
consists of a set $W$ of \emph{possible worlds}, a binary \emph{accessibility relation} $R\subseteq W\times W$ and a \emph{valuation function} $V:\mathsf{X}\rightarrow \mathcal{P}(W)$. 
A \emph{team} $T$ of $M$ is a subset $T\subseteq W$ of the set of worlds in $M$.
The \emph{image} of $T$ (denoted $R[T]$) and the \emph{preimage} of $T$ (denoted  $R^{-1}[T]$) are defined as
$$R[T] = \{v \in W \mid \exists w \in T :wRv\}, \text{ and}$$ 
$$R^{-1}[T] = \{w \in W \mid \exists v \in T : wRv\}.$$ 
We say that a team $S$ of $M$ is a \emph{successor team} of $T$, written $TRS$, if
$$S \subseteq R[T]\text{ and }T \subseteq R^{-1}[S];$$ 
that is, every world in $S$ is accessible 
from a world in $T$, and every world in $T$ has an accessible world in $S$.

\begin{definition}
For any Kripke model $M$ over $\mathsf{X}$ and team $T$ of $M$, the satisfaction relation $M,T\models\phi$ (or simply $T\models\phi$) for an $\ML(\subseteq)$-formula $\phi(\mathsf{X})$ is given inductively by the following clauses:
\begin{align*}
 M,T\models p  \iff &T\subseteq V(p). \\
 M,T\models\bot  \iff &T=\emptyset.\\
M,T\models\alpha_1\dots\alpha_n\subseteq\beta_1\dots\beta_n \iff& \text{for all } w\in T,\text{ there exists } v\in T \text{ such that  for all }
\\
&1\leq i\leq n,~M,\{w\}\models\alpha_i \text{ iff } M,\{v\}\models\beta_i.\\
 M,T\models\neg\alpha  \iff& M,\{w\} \not\models\alpha\text{ for all }w\in T.  \\
 M,T\models\phi\lor\psi     \iff& M,T_1\models\phi \text{ and } M,T_2\models\psi \text{ for some } T_1,T_2\subseteq T \\
 &\text{ such that }  T_1\cup T_2=T.\\
 M,T\models\phi\land\psi  \iff& M,T\models\phi \text{ and } M,T\models\psi. \\
 M,T\models\Diamond\phi          \iff&  M,S\models\phi\text{ for some } S \text{ such that } TRS.     \\
M,T\models\Box\phi          \iff&  M,R[T]\models\phi.   
 \end{align*}
We say that a set of formulas $\Gamma$ \emph{entails} $\phi$, written $\Gamma\models \phi$, if for all models $M$ and teams $T$ of $M$, if $M,T\models \gamma$ for all $\gamma\in \Gamma$, then $M,T\models \phi$. We write simply $\phi_1,\ldots,\phi_n\models\phi$ for $\{\phi_1,\ldots,\phi_n\}\models\phi$ and $\models \phi$ for $\emptyset\models \phi$, where $\emptyset$ is the empty set of formulas. If both $\phi\models\psi$ and $\psi\models\phi$, we say that $\phi$ and $\psi$ are \emph{equivalent}, and write $\phi \equiv \psi$.
\end{definition}

We often write $T\models\phi$ instead of $M,T\models\phi$.

It is easy to verify that formulas of $\ML(\subseteq)$ have the following properties:
\begin{description}
\item[\textbf{Union closure:}] If $M,T_i\models\phi$ for all $i\in I\neq \emptyset$, then $M,\bigcup_{i\in I}T_i\models\phi$.
\item[\textbf{Empty Team Property:}] $M,\emptyset\models\phi$ for all models $M$.
\end{description} 
Classical formulas $\alpha$ (i.e., formulas of $\ML$) are, in addition, \emph{downward closed}, meaning that $M,T\models\alpha$ implies $M,S\models\alpha$ for all $S\subseteq T$. It is easy to show that a formula has the downward closure property, union closure property and the empty team property if and only if it has the {\em flatness property}:
\begin{description}
\item[\textbf{Flatness:}] $M,T\models \phi$ iff $M,\{w\}\models \phi$ for every  $w\in T$.
\end{description}
Classical formulas are thus flat. It is also straightforward to verify that for any classical formula $\alpha$,
\[M,\{w\}\models \alpha 
    \iff M,w\models \alpha,\]
   where $\models$ on the right is the usual single-world-based satisfaction relation for $\ML$. It follows from this that $\ML$ (with team semantics) coincides with the usual single-world based modal logic, and hence that $\ML(\subseteq)$ is {\em conservative} over the usual modal logic.
\begin{proposition}
\label{prop:semantics-state-world}
For any set $\Gamma\cup\{\alpha\}$  of $\ML$-formulas,  
$$\Gamma\models\alpha  \iff \Gamma\models^c \alpha, $$
where $\models^c$ is the usual entailment relation for $\ML$ (over the single-world semantics).
\end{proposition}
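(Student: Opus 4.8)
The plan is to reduce both directions to the two facts established just before the statement: that classical formulas are flat, and that for a classical formula $\alpha$ and a world $w$ we have $M,\{w\}\models\alpha$ iff $M,w\models\alpha$. Since $\Gamma\cup\{\alpha\}$ consists entirely of classical formulas, every formula involved enjoys both properties, and the argument is essentially bookkeeping about translating between the team $T$, its singletons $\{w\}$, and the worlds $w$ themselves.

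For the left-to-right direction, I would assume $\Gamma\models\alpha$ (team entailment) and take an arbitrary model $M$ and world $w$ with $M,w\models\gamma$ for all $\gamma\in\Gamma$. By the world--singleton correspondence, $M,\{w\}\models\gamma$ for all $\gamma\in\Gamma$, so the team-semantic hypothesis applied to the team $\{w\}$ yields $M,\{w\}\models\alpha$, hence $M,w\models\alpha$. As $M$ and $w$ were arbitrary, $\Gamma\models^c\alpha$.

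For the right-to-left direction, I would assume $\Gamma\models^c\alpha$ and take an arbitrary model $M$ and team $T$ with $M,T\models\gamma$ for all $\gamma\in\Gamma$. Fix $w\in T$. By flatness (applied to each $\gamma$), $M,\{w\}\models\gamma$, hence $M,w\models\gamma$, for all $\gamma\in\Gamma$; the single-world hypothesis then gives $M,w\models\alpha$, so $M,\{w\}\models\alpha$. Since $w\in T$ was arbitrary, flatness of $\alpha$ gives $M,T\models\alpha$ (note the case $T=\emptyset$ is covered automatically, since flatness makes the condition vacuous there). Hence $\Gamma\models\alpha$.

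There is no real obstacle here: the only thing to be careful about is invoking flatness for \emph{both} the premises in $\Gamma$ and the conclusion $\alpha$, and remembering that the empty team is handled for free by the flatness characterization rather than needing a separate appeal to the Empty Team Property. I would keep the write-up to a few lines, citing the flatness characterization and the single-world/singleton equivalence displayed immediately above the proposition.
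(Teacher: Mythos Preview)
Your proposal is correct and matches the paper's approach: the paper does not give an explicit proof of this proposition, presenting it instead as an immediate consequence of the two facts you cite (flatness of classical formulas and the equivalence $M,\{w\}\models\alpha \iff M,w\models\alpha$), which is exactly how you argue it.
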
 
Given these facts, we use the notations $M,\{w\}\models \alpha$ and $M,w\models \alpha$ interchangeably whenever $\alpha$ is classical, and similarly for $\Gamma\models\alpha $ and $ \Gamma\models^c \alpha$.

Let us briefly comment on the truth conditions for the modalities. Recall our store database example from Section \ref{sec1}. If the database is also equipped with an accessibility relation detailing the (epistemically) possible future inventories of each store, we could use a modal statement such as $\Diamond(\top\bot\subseteq\tt{Sell{\_}flowers}\,\tt{Sell{\_}food})$ to express that there might in the future be a store that sells flowers and does not sell food. Note that if $S$ is a successor team of $T$, there may, for any given world $w$ in $T$, be multiple worlds accessible to $w$ in $S$. Interpretations of the modalities must take this into account---for instance, in our database example, a successor team may contain multiple inconsistent records for each store.

One may also consider alternative team-semantic truth conditions for the diamond, such as the following (called the \emph{strict semantics} for the diamond, whereas the semantics above are the \emph{lax semantics}):
$$T\models \Diamond_s\phi \iff\exists f:T\to R[T] \text{ s.t. }\forall w\in T:w R f(w)\text{ and }f[T]\models \phi,$$
where $f[T]=\{f(w)\mid w\in T\}$. With the strict semantics, $\ML(\subseteq)$ is no longer union closed, and it also fails to be bisimulation-invariant for the notion of team bisimulation established in the literature.\footnote{For the failure of union closure, consider the formula $\phi:=\Diamond_s(pq\subseteq rs)$ and a model with $W=\{w_1,w_2,w_3,w_4\}$, $R=\{(w_1,w_1),(w_2,w_2),(w_2,w_3),(w_4,w_4)\}$, and $V(p)=\{w_1,w_2\}$, $V(q)=\{w_3,w_4\}$, $V(r)=\{w_2\}$, $V(s)=\{w_3\}$. We have $\{w_1,w_2\}\models \phi$ and $\{w_2,w_4\}\models \phi$, but $\{w_1,w_2,w_4\}\not\models \phi$. For the failure of bisimulation invariance, see Section \ref{section:expressive_power}.} In this article we only consider the lax semantics. For more on strict and lax semantics, see \cite{galliani2012, yang2022, hella2019, hella20192}.\footnote{In the literature, the term ``strict semantics" typically refers to the adoption of different truth conditions not only for the diamond but also for the disjunction.} For other alternative sets of team-semantic truth conditions for the modalities, see \cite{Ciardelli2015,ciardelli2016,aloni2022, aloni2023}.

We next discuss \emph{primitive inclusion atoms}---inclusion atoms of a specific restricted form which play a crucial role in our expressive completeness results. We often abbreviate a sequence $\langle\alpha_1,\dots,\alpha_n\rangle$ of classical formulas as $\mathsf{a}$; similarly, $\mathsf{b}$ is short for $\langle\beta_1,\dots,\beta_n\rangle$, etc. We also often write $\mathsf{x}$ for a sequence $\langle x_1,\dots,x_n\rangle$ in which each $x_i$ is one of the constants $\bot$ or $\top$. The \emph{arity} of an inclusion atom $\mathsf{a}\subseteq\mathsf{b}$ is defined as the length $|\mathsf{a}|$ of the sequence $\mathsf{a}$.
 \emph{Primitive inclusion atoms} are inclusion atoms of the form $\mathsf{x}\subseteq\mathsf{a}$---that is, they are atoms with only the constants $\bot$ and $\top$ on the left-hand side. We additionally call primitive inclusion atoms $\top\dots\top\subseteq \alpha_1\dots\alpha_n$ with only the constant $\top$ on the left-hand side \emph{top inclusion atoms}. Primitive inclusion atoms $\mathsf{x}\subseteq\mathsf{a}$ are clearly \emph{upward closed} (modulo the empty team property), i.e., for any nonempty teams $T$ and $S$, if $T\models \mathsf{x}\subseteq\mathsf{a}$ and $S\supseteq T$, then $S\models \mathsf{x}\subseteq\mathsf{a} $.

Primitive inclusion atoms are relatively tractable, as the next proposition shows. Hereafter, for any $x\in\{\top,\bot\}$, we write $\alpha^\top$ for $\alpha$, and $\alpha^\bot$ for $\neg\alpha$. For $x_1,\dots,x_n\in \{\top,\bot\}$, we abbreviate $\alpha_1^{x_1}\land\dots\land\alpha_n^{x_n}$ as $\mathsf{a}^\mathsf{x}$.

 \begin{proposition}\label{prop:inclfact} For any nonempty team $T$, \[T\models\mathsf{x}\subseteq\mathsf{a}\text{ iff there exists } v\in T \text{ such that } v\models\mathsf{a}^\mathsf{x}.\]
 \end{proposition}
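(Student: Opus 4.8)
The plan is to unwind the satisfaction clause for the inclusion atom in the special case where the left-hand side is a sequence of the constants $\top$ and $\bot$, exploiting two simplifications: the truth value of each constant at a singleton is fixed, and the condition this imposes on the witnessing world $v$ turns out not to mention the universally quantified world $w$.

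First I would record that for any world $w$ and any $i\le n$ we have $M,\{w\}\models x_i$ iff $x_i=\top$: indeed $\{w\}\ne\emptyset$, so $M,\{w\}\not\models\bot$, and hence (since $\top=\neg\bot$ and by the clause for $\neg$) $M,\{w\}\models\top$. Next, for $v\in T$, the clauses for $\neg$ and $\land$ give that $M,\{v\}\models\mathsf{a}^\mathsf{x}$ iff for every $i\le n$ we have $M,\{v\}\models\alpha_i$ when $x_i=\top$ and $M,\{v\}\not\models\alpha_i$ when $x_i=\bot$ (here using that $\{v\}$ is a singleton, so $M,\{v\}\models\neg\alpha_i$ amounts to $M,\{v\}\not\models\alpha_i$). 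Combining these two observations, for each $w\in T$ and $v\in T$ the condition
\[
\text{for all }1\le i\le n,\ M,\{w\}\models x_i \iff M,\{v\}\models\alpha_i
\]
is equivalent to $M,\{v\}\models\mathsf{a}^\mathsf{x}$, i.e.\ to $v\models\mathsf{a}^\mathsf{x}$.

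Substituting this equivalence into the satisfaction clause for $\subseteq$ yields: $T\models\mathsf{x}\subseteq\mathsf{a}$ iff for every $w\in T$ there is some $v\in T$ with $v\models\mathsf{a}^\mathsf{x}$. Since the property ``$v\models\mathsf{a}^\mathsf{x}$'' does not depend on $w$, and $T$ is nonempty by hypothesis, this is equivalent to the bare existential statement that there exists $v\in T$ with $v\models\mathsf{a}^\mathsf{x}$, which is exactly the right-hand side of the proposition.

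I do not anticipate any real obstacle; the only steps needing attention are the observation that among the constants $\top,\bot$ a singleton team verifies precisely $\top$, and the collapse of the quantifier prefix $\forall w\,\exists v$ to $\exists v$, which is valid precisely because the matrix is independent of $w$ and $T\ne\emptyset$ (this is also why the nonemptiness hypothesis cannot be dropped: for $T=\emptyset$ the left-hand side holds vacuously while the right-hand side fails).
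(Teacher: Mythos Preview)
Your proof is correct and follows essentially the same approach as the paper's: both unwind the satisfaction clause for the inclusion atom, use that each constant $x_i$ has a fixed truth value at any singleton, and exploit nonemptiness of $T$ to collapse the $\forall w\,\exists v$ prefix to $\exists v$. Your version is slightly more explicit about the quantifier collapse (the paper does one direction in detail and says ``the other direction is similar''), but the argument is the same.
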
 
 \begin{proof}
Suppose  $T\neq\emptyset$ and $T\models\mathsf{x}\subseteq\mathsf{a}$. Let $w\in T$. Then there is a $v\in T$ such that $w\models x_i$ iff $v\models\alpha_i$ for all $i=1,\dots,n$. Let $i\in\{1,\dots,n\}$. If $x_i=\top$, then $w\models x_i$, so $v\models\alpha_i$. Hence $v\models\alpha_i^\top$. If $x_i=\bot$, then $w\not\models x_i$, so $v\not\models\alpha_i$. Hence $v\models\alpha_i^\bot$. So for all $i=1,\dots,n$, $v\models\alpha_i^{x_i}$. We conclude $v\models\mathsf{a}^\mathsf{x}$. The other direction is similar. 
 \end{proof} 

\begin{cor}\label{coro:inclfact} 
$\bot\subseteq\alpha\equiv\top\subseteq\neg\alpha$. 
\end{cor}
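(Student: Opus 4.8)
The plan is to read this off Proposition \ref{prop:inclfact} essentially immediately, treating the empty team separately. Recall that $\bot\subseteq\alpha$ is the primitive inclusion atom $\mathsf{x}\subseteq\mathsf{a}$ with the length-one sequences $\mathsf{x}=\langle\bot\rangle$ and $\mathsf{a}=\langle\alpha\rangle$, so that $\mathsf{a}^\mathsf{x}=\alpha^\bot=\neg\alpha$; likewise $\top\subseteq\neg\alpha$ is the primitive (indeed top) inclusion atom with $\mathsf{x}=\langle\top\rangle$ and $\mathsf{a}=\langle\neg\alpha\rangle$, so that here $\mathsf{a}^\mathsf{x}=(\neg\alpha)^\top=\neg\alpha$. (This is syntactically legitimate: since $\alpha$ is classical, so is $\neg\alpha$, hence $\top\subseteq\neg\alpha$ is a well-formed inclusion atom.)

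First I would fix an arbitrary model $M$ and team $T$. If $T=\emptyset$, then by the Empty Team Property both $M,T\models\bot\subseteq\alpha$ and $M,T\models\top\subseteq\neg\alpha$ hold, so the two formulas are (vacuously) satisfied by the same empty team. If $T\neq\emptyset$, then Proposition \ref{prop:inclfact} applies to both atoms: $M,T\models\bot\subseteq\alpha$ iff there exists $v\in T$ with $M,\{v\}\models\neg\alpha$, and $M,T\models\top\subseteq\neg\alpha$ iff there exists $v\in T$ with $M,\{v\}\models\neg\alpha$. These two conditions are literally identical, so $M,T\models\bot\subseteq\alpha$ iff $M,T\models\top\subseteq\neg\alpha$.

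Combining the two cases, $M,T\models\bot\subseteq\alpha$ iff $M,T\models\top\subseteq\neg\alpha$ for every model $M$ and team $T$, which is exactly $\bot\subseteq\alpha\models\top\subseteq\neg\alpha$ and $\top\subseteq\neg\alpha\models\bot\subseteq\alpha$, i.e.\ $\bot\subseteq\alpha\equiv\top\subseteq\neg\alpha$. There is no real obstacle here; the only points requiring a moment's care are (i) remembering to dispatch the empty team via the Empty Team Property, since Proposition \ref{prop:inclfact} is stated only for nonempty teams, and (ii) correctly unwinding the $\mathsf{a}^\mathsf{x}$ abbreviation for the two length-one sequences so that both sides reduce to the single condition ``some world of $T$ satisfies $\neg\alpha$.''
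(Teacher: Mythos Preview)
Your proof is correct and is exactly the argument the paper intends: the corollary is stated without proof immediately after Proposition~\ref{prop:inclfact}, and your unwinding of $\mathsf{a}^\mathsf{x}$ in the two length-one cases together with the Empty Team Property is precisely how it follows.
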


The following example illustrates an interesting consequence of the upward closure (modulo the empty team property) of primitive inclusion atoms in our modal setting.

\begin{example}\label{example model}
Consider the model $M=(W,R,V)$ as illustrated in the figure below, where the relation $R$ is represented by the arrows, and $V(p)=\{v^\prime\}$. Consider the teams $T=\{u,v\}$ and $S=\{u',v'\}$. Since $v'\in S$ and $v'\models p$, by Proposition \ref{prop:inclfact}, $S\models \top \subseteq p$. It is also easy to verify that $TRS$, and hence $T\models\Diamond (\top\subseteq p)$. Similarly $R[T]\models \top\subseteq p$, whereby $T\models\Box(\top\subseteq p)$. In fact, we have in general $\Diamond(\mathsf{x}\subseteq \mathsf{a})\models\Box(\mathsf{x}\subseteq \mathsf{a})$ for  an arbitrary primitive inclusion atom $\mathsf{x}\subseteq \mathsf{a}$. Moreover, observe that the subteam $\{u\}$ of $T$ does not satisfy either $\Diamond(\top\subseteq p)$ or $\Box(\top\subseteq p)$, illustrating the failure of downward closure in $\ML(\subseteq)$.

\begin{center}
\begin{tikzpicture}[modal]
\node[point] (u) [label=below:{$u$}] {};
\node[point] (v) [label=below:{$v$},below=1cm of u] {};
\node[point] (uu) [label=below:{$u^\prime\not\models p$},right=3cm of u] {};
\node[point] (vv) [label=below:{$v^\prime\models p$},right=3cm of v] {};
\node[point] (w) [label=below:{$w^\prime\not\models p$},below=1.1cm of vv] {};

\path[->] (u)  edge (uu);
\path[->] (v) edge (vv);
\path[->] (v) edge (w);

\node[draw=black, rounded corners, fit=(u) (v),inner sep=10mm,label=$T$](FIt1) {};
\node[draw=black, rounded corners, fit=(uu) (vv) (w) ,inner sep=10mm,label=$R{[}T{]}$](FIt2) {};
\node[ellipse,draw=blue, fit=(uu) (vv),inner sep=4.7mm,label=60:{\color{blue}$S$}](FIt3) {};
\end{tikzpicture}
\end{center}

\end{example}

It was proved in \cite{yang2022} in the propositional context that
arbitrary inclusion atoms can be defined in terms of primitive inclusion atoms, and further in terms of unary primitive inclusion atoms (inclusion atoms of the form $\top\subseteq\alpha$ or $\bot\subseteq \alpha$).  
This result can be extended to our current modal context by the same argument.

\begin{lemma}[\cite{yang2022}]\label{lemma:inclusion_atom_reduction} Let $\mathsf{a},\mathsf{b}$ be sequences of $\ML$-formulas, and let $\mathsf{x}, \mathsf{y}$ be sequences each of whose elements is either $\top$ or $\bot$. Then
\begin{enumerate}[label=(\roman*)]
    \item \label{lemma:inclusion_atom_reduction_i} $\bigwedge_{\mathsf{x}\in\{\top,\bot\}^{|\mathsf{a}|}}(\neg\mathsf{a}^\mathsf{x}\lor \mathsf{x}\subseteq\mathsf{b})\equiv\mathsf{a}\subseteq\mathsf{b}.$
    \item \label{lemma:inclusion_atom_reduction_ii} $\mathsf{x}\mathsf{y}\subseteq\mathsf{a}\mathsf{b}\equiv \mathsf{x}\subseteq\mathsf{a}\land((\mathsf{y}\subseteq\mathsf{b}\land\mathsf{a}^\mathsf{x})\lor\neg\mathsf{a}^\mathsf{x})$
\end{enumerate}
\end{lemma}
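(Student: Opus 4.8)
The plan is to establish both equivalences directly from the team-semantic truth definitions, using Proposition~\ref{prop:inclfact} to rewrite satisfaction of primitive inclusion atoms as the existence of a witnessing world. In both parts the empty team is dispatched immediately: every $\ML(\subseteq)$-formula has the empty team property, so $\emptyset$ satisfies both sides. Hence throughout I may assume the team $T$ is nonempty.

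For part~\ref{lemma:inclusion_atom_reduction_i}, I would first analyze a single conjunct $\neg\mathsf{a}^\mathsf{x}\lor \mathsf{x}\subseteq\mathsf{b}$. Since $\neg\mathsf{a}^\mathsf{x}$ is classical (hence flat) and $\mathsf{x}\subseteq\mathsf{b}$ is upward closed modulo the empty team, I claim $T\models\neg\mathsf{a}^\mathsf{x}\lor \mathsf{x}\subseteq\mathsf{b}$ iff either no world of $T$ satisfies $\mathsf{a}^\mathsf{x}$, or some world of $T$ satisfies $\mathsf{b}^\mathsf{x}$. The forward direction uses that in a split $T=T_1\cup T_2$ with $T_1\models\neg\mathsf{a}^\mathsf{x}$ and $T_2\models\mathsf{x}\subseteq\mathsf{b}$, either $T_2=\emptyset$ (so $T=T_1$) or Proposition~\ref{prop:inclfact} yields a witness $v\in T_2\subseteq T$ with $v\models\mathsf{b}^\mathsf{x}$; for the converse, given a witness $v$, the split $T_1=\{w\in T\mid w\not\models\mathsf{a}^\mathsf{x}\}$, $T_2=\{w\in T\mid w\models\mathsf{a}^\mathsf{x}\}\cup\{v\}$ works (it is a union cover, $T_2$ is nonempty, and $v\in T_2$). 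Taking the conjunction over all $\mathsf{x}\in\{\top,\bot\}^{|\mathsf{a}|}$ and noting that each $w\in T$ satisfies $\mathsf{a}^{\mathsf{x}}$ for exactly one $\mathsf{x}$ (its ``$\mathsf{a}$-type'' $\mathsf{x}_w$), the left-hand side becomes: for every $\mathsf{x}$ realized as the $\mathsf{a}$-type of some $w\in T$, there is $v\in T$ with $v\models\mathsf{b}^\mathsf{x}$. Unfolding $T\models\mathsf{a}\subseteq\mathsf{b}$ and observing that ``$w\models\alpha_i$ iff $v\models\beta_i$ for all $i$'' says exactly that the $\mathsf{b}$-type of $v$ equals $\mathsf{x}_w$, i.e.\ $v\models\mathsf{b}^{\mathsf{x}_w}$, gives precisely the same condition, completing part~\ref{lemma:inclusion_atom_reduction_i}.

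For part~\ref{lemma:inclusion_atom_reduction_ii}, I would unfold both sides via Proposition~\ref{prop:inclfact} using $(\mathsf{a}\mathsf{b})^{\mathsf{x}\mathsf{y}}=\mathsf{a}^\mathsf{x}\land\mathsf{b}^\mathsf{y}$: the left-hand side says there is $v\in T$ with $v\models\mathsf{a}^\mathsf{x}\land\mathsf{b}^\mathsf{y}$. For the right-hand side, the conjunct $\mathsf{x}\subseteq\mathsf{a}$ gives a world satisfying $\mathsf{a}^\mathsf{x}$, and in any split $T=T_1\cup T_2$ witnessing $(\mathsf{y}\subseteq\mathsf{b}\land\mathsf{a}^\mathsf{x})\lor\neg\mathsf{a}^\mathsf{x}$, every world of $T_1$ satisfies $\mathsf{a}^\mathsf{x}$ while every world of $T_2$ fails it; hence any world satisfying $\mathsf{a}^\mathsf{x}$ lies in $T_1$, so $T_1\neq\emptyset$ and Proposition~\ref{prop:inclfact} produces $v\in T_1$ with $v\models\mathsf{b}^\mathsf{y}$, whence $v\models\mathsf{a}^\mathsf{x}\land\mathsf{b}^\mathsf{y}$. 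Conversely, from a witness $v\models\mathsf{a}^\mathsf{x}\land\mathsf{b}^\mathsf{y}$ I would take $T_1=\{w\in T\mid w\models\mathsf{a}^\mathsf{x}\}$ and $T_2=\{w\in T\mid w\not\models\mathsf{a}^\mathsf{x}\}$: flatness of $\mathsf{a}^\mathsf{x}$ gives $T_1\models\mathsf{a}^\mathsf{x}$ and $T_2\models\neg\mathsf{a}^\mathsf{x}$, and $v\in T_1$ together with Proposition~\ref{prop:inclfact} gives $T_1\models\mathsf{y}\subseteq\mathsf{b}$, while $v$ simultaneously witnesses $T\models\mathsf{x}\subseteq\mathsf{a}$.

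The only steps needing care are bookkeeping: checking that the displayed subteam splits are genuine union covers with the claimed satisfaction properties, and—the closest thing to an obstacle—the type-counting in part~\ref{lemma:inclusion_atom_reduction_i}, where one must match the quantifier pattern of $\mathsf{a}\subseteq\mathsf{b}$ against the big conjunction and verify that inserting a witness world into one half of a split (in the ``upward closure modulo empty team'' step) preserves validity of the cover.
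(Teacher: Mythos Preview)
Your proof is correct. The paper does not actually supply a proof of this lemma: it cites \cite{yang2022} and merely remarks that the propositional argument carries over to the modal setting unchanged. Your direct semantic verification via Proposition~\ref{prop:inclfact}---reducing primitive inclusion atoms to the existence of a witnessing world, and then matching the quantifier pattern of $\mathsf{a}\subseteq\mathsf{b}$ against the conjunction over $\mathsf{a}$-types---is exactly the kind of argument being deferred to, and it goes through cleanly. One small presentational point: in the converse of your single-conjunct claim in part~\ref{lemma:inclusion_atom_reduction_i} you only display the split for the disjunct ``some $v\in T$ satisfies $\mathsf{b}^\mathsf{x}$''; the other disjunct (no world satisfies $\mathsf{a}^\mathsf{x}$) is handled by the trivial split $T_1=T$, $T_2=\emptyset$, which you might mention explicitly for completeness.
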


Given Corollary \ref{coro:inclfact}, we can further conclude that an arbitrary inclusion atom can be defined in terms of unary top inclusion atoms $\top\subseteq\alpha$. Indeed, we will see in Section \ref{section:expressive_power} that every formula of $\ML(\subseteq)$ is equivalent to one in a normal form which contains no inclusion atoms save for unary top inclusion atoms.

In addition to modal inclusion logic, we also consider two other extensions of the usual modal logic: $\ML(\triangledown)$, or modal logic with the {\em might operator} $\triangledown$, and $\ML(\DotDiamond)$, or modal logic with the {\em singular might operator} $\DotDiamond$. We define the team semantics of these operators as follows:
\begin{align*}
M,T\models\triangledown\phi&\iff T=\emptyset\text{ or there exists a nonempty } S\subseteq T\text{ such that } M,S\models\phi.\\
M,T\models\DotDiamond\phi&\iff T=\emptyset\text{ or there exists  } w\in T\text{ such that } M,\{w\}\models\phi.
\end{align*}

The operator $\triangledown$ was first considered in the team semantics literature in \cite{hella2015} and the other operator $\DotDiamond$ is introduced in the present article, but very similar operators have been employed in philosophical logic and formal semantics to model the meanings of epistemic possibility modalities such as the ``might'' in ``It might be raining'' (see, for instance \cite{yalcin, veltman}). One can also make sense of this interpretation in the team semantics setting. A team $T$ can be conceived of as corresponding to an \emph{information state}: to know that the actual world $v$ is one of the worlds in $T$ is to know that whatever holds in all worlds in $T$ must be the case. If it is raining in all worlds in $T$, this information state supports the assertion that it is raining. If, on the other hand, there is some nonempty subteam $S$ of $T$ consisting of worlds in which it is not raining, then for all that one knows given the information embodied in $T$, it might not be raining. Both operators may be thought of as expressing this kind of epistemic modality, with some subtle differences in meaning between the two (note, for instance, that if $\DotDiamond \phi$ appears within the scope of another $\DotDiamond$ in a propositional (non-modal) formula, it can be substituted \emph{salva veritate} with $\phi$, whereas the analogous fact does not hold for $\triangledown$).\footnote{\label{footnote:NE}The requirement in the truth conditions that $\triangledown \phi$ and $\DotDiamond \phi$ also be true in the empty team is added to preserve the empty team property. One may also consider variants of these operators without this requirement. The nonempty variant of $\DotDiamond$ is sometimes called the \emph{exists operator} $\mathsf{E}$ and is discussed in, for instance, \cite{kontinen2015,luck}. Logics with both the nonempty variant $\triangledown_{\normalfont\textsc{ne}}$ of $\triangledown$ and the disjunction $\vee$ may be thought of as variants of logics which include $\vee$ and the \emph{nonemptiness atom} $\normalfont\textsc{ne}$ (where $T\models \normalfont\textsc{ne} $ iff $T\neq \emptyset$) due to the equivalences $\triangledown_{\normalfont\textsc{ne}}\phi\equiv (\phi \land \normalfont\textsc{ne}) \vee \top$ and $\normalfont\textsc{ne}\equiv \triangledown_{\normalfont\textsc{ne}}\top$; for more on these logics, see \cite{yangvaananen,aloni2023}.}

The singular might $\DotDiamond\phi$  clearly entails the (non-singular) might $\triangledown\phi$; the converse direction $\triangledown\phi\models\DotDiamond\phi$ holds whenever $\phi$ is downward closed, and in particular whenever $\phi$ is classical. For classical formulas $\alpha$, both $\triangledown \alpha$ and $\DotDiamond\alpha$ coincide with the unary top inclusion atom  $\top\subseteq\alpha$.

\begin{fact}\label{equi_might}
For any $\ML$-formula $\alpha$, $\top\subseteq\alpha\equiv\;\DotDiamond\alpha\equiv\triangledown\alpha.$
\end{fact}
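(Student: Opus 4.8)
The plan is to establish the two equivalences $\top\subseteq\alpha\equiv\DotDiamond\alpha$ and $\DotDiamond\alpha\equiv\triangledown\alpha$ by unwinding the semantic clauses, using the flatness of the classical formula $\alpha$ throughout. The empty-team case is immediate for all three formulas, so in what follows I would fix a nonempty team $T$ of an arbitrary model $M$.

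First I would show $\top\subseteq\alpha\equiv\DotDiamond\alpha$. By Proposition~\ref{prop:inclfact} (applied with the length-one sequence $\mathsf{x}=\langle\top\rangle$ and $\mathsf{a}=\langle\alpha\rangle$, so that $\mathsf{a}^\mathsf{x}=\alpha^\top=\alpha$), we have $M,T\models\top\subseteq\alpha$ iff there exists $v\in T$ with $M,\{v\}\models\alpha$. But the right-hand side is, by definition, exactly the condition for $M,T\models\DotDiamond\alpha$ in the nonempty case. Hence the two hold in the same teams.

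Next I would show $\DotDiamond\alpha\equiv\triangledown\alpha$. The direction $\DotDiamond\alpha\models\triangledown\alpha$ is the general fact (noted just before the Fact) that singular might entails might: if $M,\{v\}\models\alpha$ for some $v\in T$, then $S:=\{v\}$ is a nonempty subteam of $T$ with $M,S\models\alpha$, so $M,T\models\triangledown\alpha$. For the converse $\triangledown\alpha\models\DotDiamond\alpha$, suppose $M,T\models\triangledown\alpha$; then there is a nonempty $S\subseteq T$ with $M,S\models\alpha$. Since $\alpha$ is classical, it is flat, so $M,\{v\}\models\alpha$ for every $v\in S$; picking any $v\in S$ (which exists since $S\neq\emptyset$) and noting $v\in T$, we get $M,T\models\DotDiamond\alpha$. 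Chaining the two equivalences gives $\top\subseteq\alpha\equiv\DotDiamond\alpha\equiv\triangledown\alpha$.

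There is no real obstacle here — the statement is essentially a bookkeeping exercise combining Proposition~\ref{prop:inclfact} with the flatness of classical formulas and the empty-team conventions built into the $\triangledown$ and $\DotDiamond$ clauses. The only point requiring a moment's care is the $\triangledown\alpha\models\DotDiamond\alpha$ direction, where one must invoke downward closure (via flatness) of $\alpha$ to pass from satisfaction in the subteam $S$ to satisfaction in the singleton $\{v\}$; without classicality of $\alpha$ this implication would fail, which is precisely why the Fact is stated only for $\ML$-formulas.
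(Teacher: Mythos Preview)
Your proof is correct. The paper states this as a Fact without giving a proof, so there is no approach to compare against; your argument---handling the empty-team case separately, invoking Proposition~\ref{prop:inclfact} for the first equivalence, and using flatness of $\alpha$ for the $\triangledown\alpha\models\DotDiamond\alpha$ direction---is exactly the intended verification.
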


In general, the two might operators behave differently; in particular, there are subtle differences when the operators are iterated: $\triangledown (\triangledown \phi \land \triangledown \psi)\equiv\triangledown \phi \land \triangledown \psi$, whereas  $\DotDiamond \phi \land \DotDiamond \psi\not\models \DotDiamond (\DotDiamond \phi \land \DotDiamond \psi)$; 
$\DotDiamond( \phi \land \DotDiamond \psi) \equiv \DotDiamond (\phi \land \psi)$, whereas $\triangledown( \phi \land \triangledown \psi) \not\models \triangledown (\phi \land \psi)$.

As is the case with $\ML(\subseteq)$, the logics $\ML(\triangledown)$ and $\ML(\DotDiamond)$ are union closed and have the empty team property; we further show in Section \ref{section:expressive_power} that the three logics are in fact expressively equivalent.

As with many other logics with team semantics, the three logics we consider in this article are not closed under uniform substitution. Recall that a \emph{substitution} $\sigma$ for a logic $\mathcal{L}$ is a mapping from the set of formulas $\mathcal{L}$ to itself that commutes with the connectives (and connective-like atoms such inclusion atoms) of $\mathcal{L}$. We say that $\mathcal{L}$ is \emph{closed under} substitution $\sigma$ if for any set $\Gamma\cup\{\phi\}$ of formulas of $\mathcal{L}$,
$$\Gamma\models \phi \quad\text{implies}\quad \{\sigma(\gamma)\mid \gamma\in \Gamma\}\models \sigma(\phi).$$
A logic $\mathcal{L}$ is {\em closed under uniform substitution} if it is closed under all substitutions. Note that due to our syntactic restrictions, any map mapping a $\ML(\subseteq)$-formula $p\subseteq q$ to a non-formula $(r\subseteq s)\subseteq q$ is not considered a valid substitution for $\ML(\subseteq)$. This does not mean that substitution into inclusion atoms is disallowed in general: a valid substitution might map $p\subseteq q$ to $(r\land s)\subseteq q$.

To see why $\ML(\subseteq)$ is not closed under uniform substitution, note that clearly 
$(p\lor \neg p)\land q\models (p\land q)\lor(\neg p\land q)$ holds. But when we substitute $\top\subseteq p$ for $q$ on both sides, the entailment no longer holds: $(p\lor \neg p )\land\top\subseteq p\not\models (p\land \top \subseteq p)\lor(\neg p\land\top\subseteq p)$.
For a counterexample to the entailment, consider the team $R[T]=\{u^\prime,v^\prime,w^\prime\}$ from Example \ref{example model}. Clearly $R[T]\models(p\lor \neg p)\land \top\subseteq p$, but there are no subteams $T_1,T_2\subseteq R[T]$ such that $T_1\cup T_2=R[T]$ with $T_1\models p\land \top\subseteq p$ and $T_2\models \neg p\land \top\subseteq p$. Similar counterexamples can also be found for $\ML(\triangledown)$ and $\ML(\DotDiamond)$: in the above example, instead of using the top inclusion atom $\top\subseteq p$, we can equivalently (by Fact \ref{equi_might}) use the formula $\triangledown p$ or the formula $\DotDiamond p$.

Nevertheless, the three logics we consider in the article  are  closed under {\em classical substitutions}, namely, substitutions $\sigma$ such that $\sigma(p)$ is a classical formula for all $p\in\mathsf{Prop}$. We now prove this by using the same strategy as in \cite{yangvaananen}, where it was proved that a number of propositional team logics (including propositional inclusion logic) are closed under classical substitutions.

\begin{lemma}\label{classicalsubstlemma}
Let $\phi$ be a formula in $\ML(\subseteq)$, $\ML(\triangledown)$, or $\ML(\DotDiamond)$, and $\sigma$ a classical substitution for the relevant logic. For any model $M=(W,R,V)$ over $\mathsf{X}\supseteq \mathsf{Prop}(\sigma(\phi))$ and team $T$ of $M$, 
\[M,T \models \sigma(\phi) \iff M_\sigma,T \models \phi,\]
where $M_\sigma=(W,R,V_\sigma)$ is any model over $\mathsf{Y}\supseteq \mathsf{Prop}(\phi)$ satisfying $M_\sigma, w \models p$ iff $M,w\models\sigma(p)$, for all $p\in\mathsf{Y}$ and $w\in W$.
\end{lemma}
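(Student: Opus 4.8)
The plan is to prove the lemma by induction on the structure of $\phi$, treating uniformly the connectives shared by all three logics and then the operator-specific cases $\triangledown$ and $\DotDiamond$. The key auxiliary observation, which I would isolate at the outset, is that the claim for a \emph{classical} formula reduces to the ordinary substitution lemma of single-world modal logic: since $\sigma$ is a classical substitution, $\sigma(\alpha)$ is again a classical formula whenever $\alpha$ is, and by the remarks preceding Proposition \ref{prop:semantics-state-world} (which identify $M,\{w\}\models\alpha$ with $M,w\models\alpha$ for classical $\alpha$), a routine sub-induction on $\alpha$ — or a direct appeal to the closure of $\ML$ under uniform substitution — yields
\[M,\{w\}\models\sigma(\alpha)\iff M_\sigma,\{w\}\models\alpha\]
for every world $w$. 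This settles all the atomic base cases: $\phi=p$ (additionally using flatness of the classical formula $\sigma(p)$ to pass between the team $T$ and its individual worlds, so that $M,T\models\sigma(p)$ iff $T\subseteq V_\sigma(p)$ iff $M_\sigma,T\models p$); $\phi=\bot$ (where $\sigma(\bot)=\bot$ and both sides assert $T=\emptyset$); $\phi=\neg\alpha$ (whose clause is phrased worldwise, so the displayed equivalence applies directly at each $w\in T$); and $\phi=\alpha_1\dots\alpha_n\subseteq\beta_1\dots\beta_n$ (where $\sigma$ distributes over the atom and one applies the displayed equivalence to each $\sigma(\alpha_i)$ and $\sigma(\beta_i)$ at the relevant singletons $\{w\}$ and $\{v\}$).

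For the inductive step I would exploit that $M$ and $M_\sigma$ share the same underlying frame $(W,R)$, so that the notions ``$T_1\cup T_2=T$'', ``$R[T]$'', and ``$TRS$'' are literally the same computed in either model. Hence for $\phi=\psi\lor\chi$, $\phi=\psi\land\chi$, $\phi=\Diamond\psi$, and $\phi=\Box\psi$ the satisfaction clause for $\sigma(\phi)$ in $M$ and for $\phi$ in $M_\sigma$ have identical outer shape, and the equivalence follows by applying the induction hypothesis to $\psi$ and $\chi$ on the relevant subteams (noting that $M$ over $\mathsf{X}$ and $M_\sigma$ over $\mathsf{Y}$ remain admissible models for the subformulas, since $\mathsf{Prop}(\psi)\subseteq\mathsf{Prop}(\phi)\subseteq\mathsf{Y}$ and $\mathsf{Prop}(\sigma(\psi))\subseteq\mathsf{Prop}(\sigma(\phi))\subseteq\mathsf{X}$ because $\sigma$ commutes with the connectives). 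The cases $\phi=\triangledown\psi$ and $\phi=\DotDiamond\psi$ go through in exactly the same manner: their clauses refer only to ``$T=\emptyset$'', to a nonempty $S\subseteq T$, or to a world $w\in T$, none of which involves the valuation, so the induction hypothesis on $\psi$ (applied to $S$, respectively to $\{w\}$) closes these cases.

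I do not expect a serious obstacle: the substance lies entirely in the atomic layer. The one point requiring care is to invoke the classical substitution fact at the level of \emph{single worlds} and only then lift it to teams via flatness of classical formulas — conflating the two would obscure precisely the $\neg\alpha$ and inclusion-atom clauses, whose truth conditions are stated worldwise. It is also worth remarking explicitly that the statement does not depend on the particular choice of $M_\sigma$ (or of $\mathsf{Y}$): the induction uses only the values $M_\sigma,w\models p$ for $p\in\mathsf{Prop}(\phi)$, and these are pinned down by the defining condition on $M_\sigma$.
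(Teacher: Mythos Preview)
Your proposal is correct and follows essentially the same approach as the paper: structural induction on $\phi$, with the base case $p$ handled via flatness of the classical formula $\sigma(p)$ and the remaining cases by the induction hypothesis, exploiting that $M$ and $M_\sigma$ share the same frame. You supply considerably more detail than the paper's terse sketch---in particular you isolate the single-world classical substitution fact and treat $\neg\alpha$ and inclusion atoms explicitly---but this is an expositional choice rather than a different argument (the paper simply folds these into ``the other cases follow by the induction hypothesis,'' since the classical $\alpha$'s and $\alpha_i,\beta_i$'s are themselves subformulas to which the hypothesis applies).
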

\begin{proof}
We prove the lemma by induction on $\phi$. The case in which $\phi=p$ follows by flatness of $\sigma(p)$. The other cases follow by the induction hypothesis. In particular, for $\phi=\Diamond\psi$, we have $M,T\models\Diamond\sigma(\psi)$ if and only if there is a team $S$ such that $TRS$ and $M,S\models\sigma(\psi)$. By the induction hypothesis, this is the case if and only if  there is a team $S$ such that $TRS$ and $M_\sigma,S\models\psi$, which holds if and only if $M_\sigma,T\models\Diamond\psi$.
\end{proof}

\begin{theorem}\label{classicalsubstthm}
Let $\Gamma\cup\{\phi\}$ be a set of formulas in $\ML(\subseteq)$, $\ML(\triangledown)$, or $\ML(\DotDiamond)$. For any classical substitution $\sigma$ for the relevant logic, if
$\Gamma\models\phi$, then $\{\sigma(\gamma) \vert \gamma\in\Gamma\}\models \sigma(\phi)$.
\end{theorem}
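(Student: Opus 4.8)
The plan is to get this directly from Lemma~\ref{classicalsubstlemma} via a purely semantic argument: push a valid entailment forward along $\sigma$ by routing it through the auxiliary model $M_\sigma$. So first I would fix a classical substitution $\sigma$ for the relevant logic and assume $\Gamma\models\phi$. To prove $\{\sigma(\gamma)\mid\gamma\in\Gamma\}\models\sigma(\phi)$, I would take an arbitrary model $M=(W,R,V)$ over a set of propositional symbols large enough to contain $\mathsf{Prop}(\sigma(\psi))$ for every $\psi\in\Gamma\cup\{\phi\}$, together with an arbitrary team $T$ of $M$ satisfying $M,T\models\sigma(\gamma)$ for all $\gamma\in\Gamma$; the goal is then $M,T\models\sigma(\phi)$.

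Next I would construct the single auxiliary model $M_\sigma=(W,R,V_\sigma)$ that works simultaneously for every formula in $\Gamma\cup\{\phi\}$, by setting $V_\sigma(p):=\{w\in W\mid M,w\models\sigma(p)\}$ for all $p\in\mathsf{Prop}$. This is legitimate exactly because $\sigma$ is classical: each $\sigma(p)$ is an $\ML$-formula, hence flat, so its truth is determined world by world and $V_\sigma$ is a genuine valuation with $M_\sigma,w\models p$ iff $M,w\models\sigma(p)$. Applying Lemma~\ref{classicalsubstlemma} to each $\gamma\in\Gamma$ gives $M_\sigma,T\models\gamma$ for all $\gamma\in\Gamma$; the hypothesis $\Gamma\models\phi$ then yields $M_\sigma,T\models\phi$; and a final application of Lemma~\ref{classicalsubstlemma}, now to $\phi$, converts this back into $M,T\models\sigma(\phi)$, which is what we wanted.

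I do not expect any genuine obstacle here, since all the work sits in Lemma~\ref{classicalsubstlemma}; the only points needing a little care are (i) defining $M_\sigma$ on all propositional symbols (or at least on $\bigcup_{\psi\in\Gamma\cup\{\phi\}}\mathsf{Prop}(\psi)$) so that one and the same auxiliary model witnesses the lemma for $\phi$ and for every $\gamma$ at once, and (ii) observing that the argument is indifferent to whether $\Gamma$ is finite or infinite, as Lemma~\ref{classicalsubstlemma} is invoked formula-by-formula. The three cases $\ML(\subseteq)$, $\ML(\triangledown)$, $\ML(\DotDiamond)$ require no separate treatment, since Lemma~\ref{classicalsubstlemma} is already stated uniformly for all of them.
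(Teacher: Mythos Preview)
Your proposal is correct and follows essentially the same approach as the paper: both proofs route the entailment through the auxiliary model $M_\sigma$, applying Lemma~\ref{classicalsubstlemma} first to each $\gamma\in\Gamma$ to transfer the assumptions into $M_\sigma$, then invoking $\Gamma\models\phi$, and finally applying the lemma once more to $\phi$ to return to $M$. Your additional remarks on defining $M_\sigma$ uniformly over all relevant propositional symbols and on the irrelevance of the cardinality of $\Gamma$ are accurate elaborations but do not constitute a different route.
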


\begin{proof}
If $\Gamma \models \phi$, then for any model $M$ and team $T$ of $M$,
\begin{align*}
   M,T \models \sigma(\gamma)\ \text{for all } \gamma \in \Gamma &\implies M_\sigma,T \models \gamma\ \text{for all } \gamma \in \Gamma \tag{Lemma \ref{classicalsubstlemma}}\\
   &\implies M_\sigma,T \models \phi  \tag{By assumption} \\
   &\implies M,T \models \sigma(\phi) \tag{Lemma \ref{classicalsubstlemma}}
\end{align*} 
Hence $\{\sigma(\gamma) \vert \gamma \in \Gamma\} \models \sigma(\phi)$.
\end{proof}

As with standard modal logic, one can provide a first-order translation of $\ML(\subseteq)$ and its variants; see \cite{kontinen2015} for a translation (via normal form) of many modal team-based logics into classical first-order logic, and see our \hyperref[secA1]{Appendix} for a translation of $\ML(\subseteq)$ into first-order inclusion logic.


\section{Expressive completeness and normal forms} \label{section:expressive_power}

It was proved in \cite{hella2015} that each of $\ML(\subseteq)$ and $\ML(\triangledown)$ is expressively complete with respect to the class $\mathbb{U}$ of union-closed modally definable team properties with the empty team property. In this section, we review this result, and show that our new variant $\ML(\DotDiamond)$ has the same expressive power. The proofs of these results also yield a normal form for each logic. These normal forms are crucial for proving the completeness of our axiomatizations for the logics---see Section \ref{section:axiomatizations}.


As in the single-world setting, in the team-based setting modal definability can be characterized by bisimulation invariance: the modally definable properties are precisely those invariant (i.e., closed) under bisimulation. We begin by recalling the standard notion of bisimulation as well as that of Hintikka formulas---characteristic formulas which serve to capture bisimilarity in the syntax of $\ML$---and then proceed to construct analogous notions for teams.

Throughout this section, we make use of a fixed finite set $\mathsf{X}\subseteq\mathsf{Prop}$ of propositional symbols; we often omit mention of $\mathsf{X}$ in definitions and results to keep notation light. If $M$ is a Kripke model (over $\mathsf{X}$) and $w\in W$, we call $(M,w)$ a \emph{pointed model} (over $\mathsf{X}$).


\begin{definition}
For any $(M,w)$, $(M^\prime,w^\prime)$, and $k\in\mathbb{N}$, $(M,w)$ and  $(M^\prime,w^\prime)$ are $\mathsf{X},k$-bisimilar, written $M,w\leftrightarroweq^\mathsf{X}_k M^\prime,w^\prime$ (or simply $w \leftrightarroweq_k w'$), if the following recursively defined relation holds:
\begin{enumerate}[label=(\roman*)]
    \item $M,w\leftrightarroweq^\mathsf{X}_0M^\prime,w^\prime$ iff $M,w\models p \iff M^\prime,w^\prime\models p$ for all $p\in\mathsf{X}$. 
    \item $M,w\leftrightarroweq^\mathsf{X}_{k+1}M^\prime,w^\prime$ if $M,w\leftrightarroweq^\mathsf{X}_0 M^\prime,w^\prime$ and:
    \begin{enumerate}[align=left]
        \item[(Forth condition)] For every world $v$ of $M$ with $wRv$ there is a world $v^\prime$ of $M^\prime$ with $w^\prime Rv^\prime$ such that $M,v\leftrightarroweq^\mathsf{X}_{k}M^\prime,v^\prime$.
        \item[(Back condition)] For every world $v^\prime$ of $M^\prime$ with $w^\prime Rv^\prime$ there is a world $v$ of $M$ with $wRv$ such that $M,v\leftrightarroweq^\mathsf{X}_{k}M^\prime,v^\prime$.
    \end{enumerate}
\end{enumerate}
\end{definition}


The \emph{modal depth} $md(\phi)$ of a formula $\phi$ is defined by the following clauses:
\begin{align*}
md(p)&= md(\bot)=0,\\
md(\neg\alpha)&= md(\alpha), \\
md(\psi_1\lor\psi_2)&= md(\psi_1\land\psi_2)= \text{max}\{ md(\psi_1),  md(\psi_2)\}, \\
md(\Diamond\psi)&= md(\Box\psi)= md(\psi)+1, \text{ and}\\
md(\alpha_1\dots\alpha_n\subseteq\beta_1\dots\beta_n)&=\text{max}\{ md(\alpha_1),\dots md(\alpha_n), md(\beta_1)\dots md(\beta_n)\}.
\end{align*}

We say that two pointed models $(M,w)$ and $(M^\prime,w^\prime)$ are \emph{$\mathsf{X},k$-equivalent}, written $M,w \equiv^\mathsf{X}_k M^\prime,w^\prime$ (or simply $w\equiv_k w'$), if they satisfy the same $\ML$-formulas with propositional symbols among those in $\mathsf{X}$ up to modal depth $k$, i.e., if $M,w\models\alpha$ iff $M^\prime,w^\prime\models\alpha$ for every $\alpha(\mathsf{X})\in \ML$ with $md(\alpha)\leq k$.

\begin{definition} Let $k\in\mathbb{N}$ and let $(M,w)$ be a pointed model over $\mathsf{Y}\supseteq \mathsf{X}$. The $k$th \emph{Hintikka formula} $\chi^{\mathsf{X},k}_{M,w}$ (or simply $\chi^k_w$) of $(M,w)$ is defined recursively by:
\begin{enumerate}
    \item[]$\chi^{\mathsf{X},0}_{M,w}:=\bigwedge\{p\mid p\in\mathsf{X}\text{ and } w\in V(p)\}\land\bigwedge\{\neg p\mid p\in\mathsf{X}\text{ and } w\not\in V(p)\}$  
    \item[]$\chi^{\mathsf{X},k+1}_{M,w}:=\chi^k_{M,w}\land\bigwedge_{v\in R[w]}\Diamond \chi^k_{M,v}\land\Box\bigvee_{v\in R[w]}\chi^k_{M,v}$.
\end{enumerate}
\end{definition}

It is not hard to see that there are only finitely many non-equivalent $k$th Hintikka formulas for a given finite $\mathsf{X}$. This is why we may assume that the conjunction and the disjunction in $\chi^{\mathsf{X},k+1}_{M,w}$ are finite and hence that $\chi^{\mathsf{X},k+1}_{M,w}$ is well-defined\footnote{\label{footnote:finite_representatives}
To be more precise, this assumption amounts to the following: if $R[w]$ is infinite, we choose a  finite set $T\subseteq R[w]$ such that for all $v\in R[w]$, there is a $v'\in T$ with $\chi^{\mathsf{X},k}_{M,v}\equiv \chi^{\mathsf{X},k}_{M,v'}$, and define $\chi^{\mathsf{X},k+1}_{M,w}$ using this finite representative set $T$ in place of $R[w]$. Clearly, the specific choice of this finite representative set $T$ makes no difference for the Hintikka formula. Similar remarks apply to other finiteness assumptions made in the sequel. 
}.

\begin{theorem}[See, e.g., \cite{goranko}]
\label{thm:world_bisimulation}
For any $k\in\mathbb{N}$ and any pointed models $(M,w)$ and $(M^\prime,w^\prime)$:
\begin{equation*}
    w \equiv_k w^\prime \iff
    w\leftrightarroweq_{k} w^\prime
    \iff w^\prime\models\chi^k_{w}. 
\end{equation*}
\end{theorem}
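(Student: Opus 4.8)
The plan is to reduce the two biconditionals to the cycle of implications
\[
w \equiv_k w' \ \implies\ w' \models \chi^k_w \ \implies\ w \leftrightarroweq_k w' \ \implies\ w \equiv_k w',
\]
which makes the three statements pairwise equivalent. The first two links are short inductions on $k$; the third is the classical single-world argument that $k$-bisimilar pointed models satisfy the same $\ML$-formulas up to modal depth $k$, and it is where essentially all of the work sits (though it is entirely standard; see, e.g., \cite{goranko}).

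For the first link I would first record, by a routine induction on $k$, the auxiliary facts that (a) $M,w\models \chi^{k}_{M,w}$ and (b) $md(\chi^{k}_{M,w})\le k$. In (a), the $\Box$-conjunct is verified using the finite-representative-set convention adopted after the definition of $\chi^{k+1}$: each $v\in R[w]$ has some $u$ in the representative set with $\chi^{k}_{M,v}\equiv\chi^{k}_{M,u}$, so $v\models\chi^k_{M,v}$ yields $v\models\bigvee_u \chi^k_{M,u}$. Granting (a) and (b): if $w\equiv_k w'$, then since $\chi^{k}_{w}$ is an $\ML$-formula over $\mathsf{X}$ of modal depth at most $k$ that holds at $w$, it holds at $w'$ too, i.e.\ $w'\models\chi^{k}_{w}$.

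For the second link I would prove $w'\models\chi^k_w \Rightarrow w\leftrightarroweq_k w'$ by induction on $k$. For $k=0$, $\chi^0_w$ is exactly the conjunction of the $\mathsf{X}$-literals true at $w$, so $w'\models\chi^0_w$ forces $w$ and $w'$ to agree on every $p\in\mathsf{X}$, which is $w\leftrightarroweq_0 w'$. In the step, suppose $w'\models\chi^{k+1}_w$; the first conjunct gives $w'\models\chi^k_w$, hence $w\leftrightarroweq_k w'$ by the induction hypothesis, and hence $w\leftrightarroweq_0 w'$. For the Forth condition, given $v$ with $wRv$, the conjunct $\Diamond\chi^k_v$ of $\chi^{k+1}_w$ supplies $v'$ with $w'Rv'$ and $v'\models\chi^k_v$, so $v\leftrightarroweq_k v'$ by the induction hypothesis; for the Back condition, given $v'$ with $w'Rv'$, the conjunct $\Box\bigvee_{v\in R[w]}\chi^k_v$ gives $v'\models\chi^k_v$ for some $v\in R[w]$, and again $v\leftrightarroweq_k v'$ by the induction hypothesis. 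Hence $w\leftrightarroweq_{k+1}w'$.

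For the third link I would prove $w\leftrightarroweq_k w' \Rightarrow w\equiv_k w'$ by induction on $k$, with a subsidiary induction on formula structure inside the step. For $k=0$, formulas of modal depth $0$ are Boolean combinations of propositional symbols and $\bot$, and $w\leftrightarroweq_0 w'$ gives each such formula the same value at $w$ and $w'$. In the step from $k$ to $k+1$, take $\alpha$ with $md(\alpha)\le k+1$ and induct on $\alpha$; the atomic and Boolean cases are immediate, and for $\alpha=\Diamond\beta$ (so $md(\beta)\le k$): if $w\models\Diamond\beta$, pick $v$ with $wRv$ and $v\models\beta$, use the Forth condition of $w\leftrightarroweq_{k+1}w'$ to obtain $v'$ with $w'Rv'$ and $v\leftrightarroweq_k v'$, and conclude $v'\models\beta$ by the outer induction hypothesis (as $md(\beta)\le k$), so $w'\models\Diamond\beta$; the converse uses the Back condition, and $\Box$ is handled symmetrically (or via $\Box\beta\equiv\neg\Diamond\neg\beta$ at single worlds). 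The only point needing real attention anywhere is the bookkeeping of the two nested inductions in this link together with the representative-set convention used in facts (a)--(b); there is no essential difficulty, which is why the statement can be cited rather than proved in detail.
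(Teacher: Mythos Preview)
Your proof is correct and is the standard argument. Note, however, that the paper does not actually prove this theorem: it is stated with a citation (``See, e.g., \cite{goranko}'') and no proof is given, since the result is classical. Your cycle of implications and the inductions on $k$ are exactly the textbook approach, and you correctly flag the one place where the paper's finite-representative-set convention for $\chi^{k+1}_w$ needs to be unwound (in verifying fact (a) and in the Forth direction of the second link, where the literal conjunct may be $\Diamond\chi^k_u$ for a representative $u$ rather than $\Diamond\chi^k_v$ itself).
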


We now define team-based analogues to the preceding world-based notions. A \emph{model with a team} (over $\mathsf{X}$) is a pair $(M,T)$, where $M$ is a model (over $\mathsf{X}$) and  $T$ is a team of $M$.
Team bisimulation (introduced in \cite{hella2014,kontinen2015}) is a straightforward generalization of world bisimulation:

\begin{definition}\label{teamkbisimdef}
For any $(M,T)$ and $ (M^\prime,T^\prime)$, and any $k\in\mathbb{N}$, $(M,T)$ and $( M^\prime,T^\prime)$ are (team) $\mathsf{X},k$-bisimilar, written $M,T\leftrightarroweq^\mathsf{X}_k M^\prime,T^\prime$ (or simply $T\leftrightarroweq_k T^\prime$), if: 
\begin{enumerate}[align=left]
        \item[(Forth condition)]
        For every $w\in T$ there exists a $w^\prime\in T^\prime$ such that $w\leftrightarroweq^\mathsf{X}_{k} w^\prime$.
    \item[(Back condition)] For every $w^\prime\in T^\prime$ there exists a $w\in T$ such that $w\leftrightarroweq^\mathsf{X}_{k} w^\prime$. 
\end{enumerate}
 \end{definition}

Clearly, both world and team $k$-bisimulation are equivalence relations. Moreover, 
if $w\leftrightarroweq_k w^\prime$, then $w\leftrightarroweq_n w^\prime$ for all $n\leq k$, and similarly if $T\leftrightarroweq_k T^\prime$, then $T\leftrightarroweq_n T^\prime$ for all $n\leq k$. The following lemma lists some further simple facts about team bisimulation.

\begin{lemma}[\cite{hella2014}]
\label{lemma:bisimulation_properties}
If $T\leftrightarroweq_{k+1} T^\prime$, then
\begin{enumerate}[label=(\roman*)]
\item \label{lemma:bisimulation_properties_i} For every $S$ such that $TRS$, there is a $S^\prime$ such that $T^\prime R^\prime S^\prime$ and $S\leftrightarroweq_k S^\prime$.

\item \label{lemma:bisimulation_properties_ii} $R[T]\leftrightarroweq_k R^\prime[T^\prime]$.

\item \label{lemma:bisimulation_properties_iii} For all $T_1,T_2\subseteq T$ such that $T_1\cup T_2=T$ there are $T^\prime_1,T^\prime_2\subseteq T^\prime$ such that $T^\prime_1\cup T^\prime_2=T^\prime$ and $T_i\leftrightarroweq_{k+1} T^\prime_i$ for $i\in\{1,2\}$.
\end{enumerate}
\end{lemma}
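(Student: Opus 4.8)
\textbf{Proof plan for Lemma \ref{lemma:bisimulation_properties}.}

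The plan is to prove the three items in order, using throughout the fact (already noted before the lemma) that $k$-bisimilarity of worlds/teams implies $n$-bisimilarity for all $n \le k$, together with the world-level forth/back conditions unpacked from $T \leftrightarroweq_{k+1} T'$. Note that $T \leftrightarroweq_{k+1} T'$ gives us, for each $w \in T$, a witness $w' \in T'$ with $w \leftrightarroweq_{k+1} w'$ (and symmetrically); and $w \leftrightarroweq_{k+1} w'$ in turn provides, by definition, the forth and back conditions one level down, i.e. a matching of $R$-successors of $w$ with $R'$-successors of $w'$ up to $k$-bisimilarity.

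For item \ref{lemma:bisimulation_properties_i}: given $S$ with $TRS$, I would build $S'$ as follows. For each $w \in T$ pick $g(w) = w' \in T'$ with $w \leftrightarroweq_{k+1} w'$. Since $S \subseteq R[T]$, every $v \in S$ has some $w \in T$ with $wRv$; using the forth condition of $w \leftrightarroweq_{k+1} g(w)$, choose $h(v) \in R'[g(w)]$ with $v \leftrightarroweq_k h(v)$ (if several $w$ work, fix one). Set $S' = \{h(v) \mid v \in S\} \cup \bigcup_{w' \in T'}(\text{one }k\text{-successor matching witness})$ — more carefully: to guarantee $T' \subseteq R'^{-1}[S']$ I also need, for each $w' \in T'$, an element of $S'$ accessible from $w'$; since $w' = g(w)$ for some $w \in T$ (by the back condition applied to $T'$, pick $w$ with $w \leftrightarroweq_{k+1} w'$), and since $T \subseteq R^{-1}[S]$ gives a $v \in S$ with $wRv$, the element $h(v)$ already lies in $R'[w']$. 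So in fact $S' := \{h(v) \mid v\in S\}$ works once the witness functions are chosen coherently: $S' \subseteq R'[T']$ by construction, and the $T' \subseteq R'^{-1}[S']$ direction follows from combining the back condition on teams with $T \subseteq R^{-1}[S]$ and the forth condition on worlds. Finally $S \leftrightarroweq_k S'$: the forth direction is immediate from $v \leftrightarroweq_k h(v)$; the back direction needs that every element of $S'$ is $h(v)$ for some $v \in S$, which holds by construction, and $v \leftrightarroweq_k h(v)$ again.

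Item \ref{lemma:bisimulation_properties_ii} is the special case $S = R[T]$, $S' = R'[T']$ of the argument in \ref{lemma:bisimulation_properties_i} (one checks $R[T]$ is a successor team of $T$ and $R'[T']$ of $T'$, then the forth/back matching of worlds is exactly what \ref{lemma:bisimulation_properties_i}'s construction produces); alternatively prove it directly from the world-level forth/back conditions, which is essentially the same computation. Item \ref{lemma:bisimulation_properties_iii}: given $T_1 \cup T_2 = T$, again fix for each $w \in T$ a witness $g(w) = w' \in T'$ with $w \leftrightarroweq_{k+1} w'$, and for each $w' \in T'$ a witness $f(w') \in T$ with $f(w') \leftrightarroweq_{k+1} w'$. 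Set $T'_i := g[T_i] \cup f^{-1}[T_i]$ — i.e. $T'_i := \{g(w) \mid w \in T_i\} \cup \{w' \in T' \mid f(w') \in T_i\}$. Then $T'_1 \cup T'_2 = T'$: any $w' \in T'$ has $f(w') \in T = T_1 \cup T_2$, so $f(w') \in T_i$ for some $i$, hence $w' \in T'_i$. And $T_i \leftrightarroweq_{k+1} T'_i$: forth — for $w \in T_i$, $g(w) \in T'_i$ and $w \leftrightarroweq_{k+1} g(w)$; back — for $w' \in T'_i$, either $w' = g(w)$ with $w \in T_i$ (done), or $f(w') \in T_i$ and $f(w') \leftrightarroweq_{k+1} w'$ (done).

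The main obstacle is bookkeeping rather than conceptual: making sure in \ref{lemma:bisimulation_properties_i} that the successor-team conditions $S' \subseteq R'[T']$ \emph{and} $T' \subseteq R'^{-1}[S']$ both hold for the $S'$ one constructs, since a naive choice of $S'$ as just the image of $S$ under a successor-matching might miss some world of $T'$ unless the witness functions on teams and on worlds are chosen compatibly. The fix is to choose the team-level witness $g: T \to T'$ first and derive $h$ on $S$ relative to $g$, so that coverage of $T'$ follows from $T \subseteq R^{-1}[S]$ via $g$ and the world-level forth condition; everything else is routine unwinding of the definitions.
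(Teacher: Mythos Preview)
The paper does not actually prove this lemma; it is stated with a citation to \cite{hella2014} and no proof is given. So there is no paper proof to compare against, and I assess your argument on its own merits.

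Your arguments for (ii) and (iii) are essentially correct, with one caveat: in (ii) you assert that $R[T]$ is a successor team of $T$, but this requires $T\subseteq R^{-1}[R[T]]$, i.e.\ that every world of $T$ has an $R$-successor, which is not assumed. So (ii) is not literally a special case of (i). Your alternative direct argument via the world-level forth/back conditions does go through, however, so this is harmless.

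Item (i) has a genuine gap. You set $S' = h[S]$, where $h(v)$ lands in $R'[g(w_v)]$ for some chosen $w_v\in T$ with $w_vRv$, and then argue $T'\subseteq R'^{-1}[S']$ because ``$w' = g(w)$ for some $w\in T$ (by the back condition applied to $T'$).'' But the back condition only produces some $w\in T$ with $w\leftrightarroweq_{k+1}w'$; it does \emph{not} guarantee $g(w)=w'$. The function $g$ was fixed in advance and need not be surjective: if $T'$ contains two $(k{+}1)$-bisimilar worlds $w'_1,w'_2$ and $T$ a single world $w$ with $g(w)=w'_1$, then nothing in your $S'$ need be $R'$-accessible from $w'_2$. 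Your ``fix'' paragraph restates the construction but does not address this.

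The standard repair is to add a second pass over $T'$: for each $w'\in T'$, pick $w\in T$ with $w\leftrightarroweq_{k+1}w'$ (back condition on teams), then $v\in S$ with $wRv$ (using $T\subseteq R^{-1}[S]$), then use the forth condition of $w\leftrightarroweq_{k+1}w'$ to obtain $v'\in R'[w']$ with $v\leftrightarroweq_k v'$. Take $S'$ to be $h[S]$ together with all such $v'$. This enlarged $S'$ still lies in $R'[T']$, still satisfies the back half of $S\leftrightarroweq_k S'$ (every added $v'$ is $k$-bisimilar to some $v\in S$), and now $T'\subseteq R'^{-1}[S']$ holds by construction.
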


We say that the models with teams $(M,T)$ and $(M^\prime,T^\prime)$ are \emph{$\mathsf{X},k$-equivalent} (in a logic $\mathcal{L}$), written $M,T \equiv^\mathsf{X}_k M^\prime,T^\prime$ (or simply $T\equiv_k T'$), if for every formula $\phi(\mathsf{X})$ in $\mathcal{L}$ with $md(\phi)\leq k$, $M,T\models\phi$ iff $M^\prime,T^\prime\models\phi$. It will follow from results we show that $T \equiv_k T'$ in any of the logics we consider iff $T\equiv_k T'$ in either of the other logics; we therefore simply write $T\equiv_k T$ without specifying the logic. It is easy to show that $k$-bisimilarity implies $k$-equivalence for all three logics:

\begin{theorem}[Bisimulation invariance]
\label{thm:invariance}
If $T\leftrightarroweq_k T^\prime$, then $T\equiv_k T^\prime$.
\end{theorem}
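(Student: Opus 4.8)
The plan is to prove the statement by induction on the modal depth $k$, and simultaneously (or rather, within each inductive step) by induction on the structure of the formula $\phi$ with $md(\phi)\leq k$. So suppose $T\leftrightarroweq_k T'$ and let $\phi$ be a formula (in whichever of the three logics) with $md(\phi)\leq k$; I want to show $M,T\models\phi \iff M',T'\models\phi$. By symmetry of $\leftrightarroweq_k$ it suffices to prove one direction, say left to right.

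First I would handle the atomic and classical cases. If $\phi=\bot$, both sides say the respective team is empty, and this is preserved since the Forth/Back conditions make $T=\emptyset$ iff $T'=\emptyset$. If $\phi=p$ or $\phi=\neg\alpha$ or more generally $\phi$ is a classical formula with $md(\alpha)\leq k$, then by flatness it suffices to check single worlds: $T\models\alpha$ iff $w\models\alpha$ for all $w\in T$; using the Forth condition, each $w\in T$ has a $k$-bisimilar $w'\in T'$, and by Theorem~\ref{thm:world_bisimulation} (world bisimilarity implies $k$-equivalence) $w\models\alpha\iff w'\models\alpha$; combined with the Back condition this gives $T\models\alpha\iff T'\models\alpha$. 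For an inclusion atom $\mathsf{a}\subseteq\mathsf{b}$ with $md\leq k$, I would again pass through world-bisimilarity: for $w\in T$ pick $k$-bisimilar $w'\in T'$ and for the witness $v\in T$ pick $k$-bisimilar $v'\in T'$, noting that since $w\leftrightarroweq_k w'$ and $v\leftrightarroweq_k v'$ agree on all classical formulas of depth $\leq k$ (in particular on each $\alpha_i,\beta_i$), the required biconditionals transfer; the Back direction is symmetric. The cases $\phi=\psi_1\land\psi_2$ is immediate from the induction hypothesis on subformulas (same team, smaller/equal depth). For $\phi=\psi_1\lor\psi_2$, use Lemma~\ref{lemma:bisimulation_properties}\ref{lemma:bisimulation_properties_iii}: given a splitting $T=T_1\cup T_2$ witnessing $T\models\psi_1\lor\psi_2$, obtain a splitting $T'=T_1'\cup T_2'$ with $T_i\leftrightarroweq_{k}T_i'$ (note $md(\psi_i)\leq k$, and Lemma~\ref{lemma:bisimulation_properties}\ref{lemma:bisimulation_properties_iii} is stated for $k+1$; one should apply it at the right level — see the remark below), then apply the induction hypothesis to each $\psi_i$.

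For the modal cases, which are the heart of the argument and where the drop in modal depth is used: if $\phi=\Diamond\psi$ with $md(\psi)\leq k-1$, then $T\leftrightarroweq_k T'$ and $k\geq 1$, so write $k=m+1$; given a successor team $S$ with $TRS$ and $S\models\psi$, apply Lemma~\ref{lemma:bisimulation_properties}\ref{lemma:bisimulation_properties_i} to get $S'$ with $T'R'S'$ and $S\leftrightarroweq_m S'$, and since $md(\psi)\leq m$ the induction hypothesis (now at depth $m<k$) yields $S'\models\psi$, hence $T'\models\Diamond\psi$. Similarly $\phi=\Box\psi$ uses Lemma~\ref{lemma:bisimulation_properties}\ref{lemma:bisimulation_properties_ii}: $R[T]\leftrightarroweq_m R'[T']$ and $md(\psi)\leq m$, so from $R[T]\models\psi$ we get $R'[T']\models\psi$ by the induction hypothesis, i.e.\ $T'\models\Box\psi$. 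For the two might-operators: if $\phi=\triangledown\psi$, then either $T=\emptyset$ — hence $T'=\emptyset$ and $T'\models\triangledown\psi$ by the empty team clause — or there is a nonempty $S\subseteq T$ with $S\models\psi$; since $T\leftrightarroweq_k T'$ one can extract from the Forth condition a corresponding nonempty $S'\subseteq T'$ with $S\leftrightarroweq_k S'$ (for each $w\in S$ choose one $k$-bisimilar partner in $T'$ and let $S'$ be the set of chosen partners; this $S'$ is nonempty, a subset of $T'$, and Forth/Back between $S$ and $S'$ hold because $k$-bisimilarity between worlds is an equivalence relation so one can also match back), then apply the induction hypothesis to $\psi$ (noting $md(\psi)=md(\phi)\leq k$). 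The case $\phi=\DotDiamond\psi$ is the same but with singleton subteams $S=\{w\}$, $S'=\{w'\}$, and $w\leftrightarroweq_k w'$.

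The main obstacle — really a bookkeeping subtlety rather than a deep difficulty — is keeping the depth index consistent between the statement of Lemma~\ref{lemma:bisimulation_properties} (which is phrased as ``if $T\leftrightarroweq_{k+1}T'$ then …'') and the induction. The clean way to organize this is to do the outer induction on $k$ with the statement ``for all $\phi$ with $md(\phi)\leq k$ and all $T,T'$, $T\leftrightarroweq_k T'$ implies $T\equiv_\phi T'$,'' and inside it do induction on $\phi$; the modal cases then legitimately invoke the outer induction hypothesis at $k-1$ after extracting a $(k-1)$-bisimilar pair via Lemma~\ref{lemma:bisimulation_properties}. One should double check that Lemma~\ref{lemma:bisimulation_properties}\ref{lemma:bisimulation_properties_iii}, applied at level $k$ (i.e.\ using the monotonicity remark that $T\leftrightarroweq_{k}T'$ gives what we need for the disjunction case where no depth is consumed) is invoked correctly — but this is exactly the content already recorded before the theorem, so no new work is required. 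Everything else is a routine unwinding of the satisfaction clauses together with Theorem~\ref{thm:world_bisimulation} for the classical and atomic base cases.
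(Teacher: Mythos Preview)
Your proposal is correct and follows essentially the same approach as the paper's proof: induction on the structure of $\phi$, invoking Lemma~\ref{lemma:bisimulation_properties}\ref{lemma:bisimulation_properties_i}--\ref{lemma:bisimulation_properties_iii} for the diamond, box, and disjunction cases respectively, and Theorem~\ref{thm:world_bisimulation} for the classical and atomic base cases. The paper organizes this as a single induction on formula complexity (taking $k=md(\phi)$ and appealing to monotonicity of $\leftrightarroweq_k$ for larger $k$) rather than your double induction on $k$ and then $\phi$, and it only spells out the $\DotDiamond$ case in detail, but the argument is the same.
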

 \begin{proof}
 We show by induction on the complexity of formulas $\phi$ in $\ML(\subseteq)$, $\ML(\triangledown)$, or $\ML(\DotDiamond)$ that if $T\leftrightarroweq_k T^\prime$, then $T\equiv_k T^\prime$ for $k=md(\phi)$. See \cite{hella2015} for details. In particular, items \ref{lemma:bisimulation_properties_i}, \ref{lemma:bisimulation_properties_ii} and \ref{lemma:bisimulation_properties_iii} in Lemma \ref{lemma:bisimulation_properties} are used in the diamond, box and disjunction cases, respectively. For $\phi=\DotDiamond\psi$, we have $T\models \DotDiamond\psi$ iff $T=\emptyset$ or $\{w\}\models \psi$ for some $w\in T$. If $T=\emptyset$, then by $T\leftrightarroweq_k T'$ also $T'=\emptyset$ so that $T'\models \DotDiamond\psi$. Otherwise by $T\leftrightarroweq_k T'$ we have $\{w\}\leftrightarroweq_k \{w'\}$ for some $w'\in T'$, and then $\{w'\}\models \psi$ by the induction hypothesis, whence $T'\models  \DotDiamond\psi$. The other direction is similar.
 \end{proof}

 Let us also demonstrate why $\ML(\subseteq)$, $\ML(\triangledown)$, and $\ML(\DotDiamond)$ with the strict semantics for the diamond (see Section \ref{section:preliminaries}) are not bisimulation-invariant for the notion of team bisimulation we have adopted. Consider a model $M=(W,R,V)$ over $\{p\}$ where $W=\{w_1,w_2,w_3\}$; $R=\{(w_1,w_1),(w_1,w_3),(w_2,w_2),(w_2,w_3)\}$; and $V(p)=\{w_3\}$. Clearly $w_1 \leftrightarroweq_k w_2$ for all $k\in \mathbb{N}$, so that also $\{w_1\}\leftrightarroweq_k \{w_1,w_2\}$ for all $k\in \mathbb{N}$. Defining $f(w_1):=w_3$ and $f(w_2):=w_2$, we have $f[\{w_1,w_2\}]\models \top \subseteq p\land \bot \subseteq p$ so that $\{w_1,w_2\}\models \Diamond_s (\top \subseteq p\land \bot \subseteq p)$. Clearly $\{w_1\}\not\models  \Diamond_s(\top \subseteq p\land \bot \subseteq p)$, so $\{w_1\} \leftindex_s{\not\equiv}_1 \{w_1,w_2\}$ where $\leftindex_s{\equiv}_1$ is $1$-equivalence defined with respect to the strict semantics. An analogous argument can be conducted in $\ML(\triangledown)$ as well as in $\ML(\DotDiamond)$.

With a suitable notion of bisimilarity at hand, we now proceed to show our expressive completeness results. We measure the expressive power of the logics in terms of the \emph{properties} they can express. A \emph{(team) property} (over $\mathsf{X}$) is a class of models with teams (over $\mathsf{X}$). For each formula $\phi(\mathsf{X})$ we denote by $\left\Vert\phi\right\Vert_\mathsf{X}$ (or simply $\left\Vert\phi\right\Vert$) the property (over $\mathsf{X}$) expressed or defined by $\phi$, i.e.,
\begin{equation*}
    \left\Vert\phi\right\Vert_\mathsf{X}:=\{(M,T)\text{ over }\mathsf{X}\mid M,T\models\phi\}. 
\end{equation*}
A property $\mathcal{P}$ is \emph{invariant under $\mathsf{X},k$-bisimulation} 
if $(M,T)\in\mathcal{P}$ and $M,T\leftrightarroweq^\mathsf{X}_k M^\prime,T^\prime$ imply that $(M^\prime,T^\prime)\in\mathcal{P}$; is \emph{closed under unions} 
if $(M,T_i)\in\mathcal{P}$ for all $i\in I\neq \emptyset$ implies that $(M,\bigcup_{i\in I}T_i)\in\mathcal{P}$; and has the \emph{empty team property} if $(M,\emptyset)\in\mathcal{P}$ for all $M$.

We say that a logic $\mathcal{L}$ is \emph{expressively complete} for a class of properties $\mathbb{P}$, written $\mathbb{P}=\left\Vert\mathcal{L}\right\Vert$, if for each finite $\mathsf{X}\subseteq\mathsf{Prop}$, the class $\mathbb{P}_\mathsf{X}$ of properties over $\mathsf{X}$ in $\mathbb{P}$ is precisely the class of properties over $\mathsf{X}$ definable by formulas in $\mathcal{L}$, i.e., if
$$\mathbb{P}_\mathsf{X}=\{\left\Vert\phi\right\Vert_\mathsf{X}\mid \phi \in \mathcal{L}\text{ and }\mathsf{Prop}(\phi)=\mathsf{X}\}.$$

Our goal is to show that each of the three logics is expressively complete for the class $\mathbb{U}$ of properties $\mathcal{P}$ such that $\mathcal{P}$ is union closed; $\mathcal{P}$ has the empty team property; and $\mathcal{P}$ is invariant under bounded bisimulation, meaning that if $\mathcal{P}$ is a property over $\mathsf{X}$, then $\mathcal{P}$ is invariant under $\mathsf{X},k$-bisimulation for some $k\in \mathbb{N}$. That is, we show:
$$\mathbb{U}=\left\Vert\mathcal{\ML(\subseteq)}\right\Vert=\left\Vert\mathcal{\ML(\triangledown)}\right\Vert =\left\Vert\mathcal{\ML(\DotDiamond)}\right\Vert.$$
Clearly, any property $\left\Vert\phi\right\Vert_\mathsf{X}$ expressible in any of the three logics is union closed and has the empty team property, and by Theorem \ref{thm:invariance} it is also invariant under $\mathsf{X},k$-bisimulation for some $k\in \mathbb{N}$ (take any $k\geq md(\phi)$). Therefore, $\{\left\Vert\phi\right\Vert_\mathsf{X}\mid \phi \in \mathcal{L}\text{ where }\mathsf{Prop}(\phi)=\mathsf{X}\}\subseteq\mathbb{U}_\mathsf{X}$ for $\mathcal{L}$ being any of the three logics. For the converse inclusion, we must show that any property $\mathcal{P}\in \mathbb{U}_\mathsf{X}$ is definable by some formula in each of the logics. Let us focus on $\ML(\subseteq)$ first.

As a first step, we will construct characteristic formulas in $\ML(\subseteq)$ for teams analogous to the Hintikka formulas (characteristic formulas for worlds) defined above. These formulas and the accompanying lemma are essentially as in \cite{hella2015}; we have simplified them somewhat using an idea presented in \cite{kontinen2015}.

\begin{lemma}\label{lemma:team_bisimulation}
For any $k\in\mathbb{N}$ and any $(M,T)$ over $\mathsf{X}$, there are formulas $ \eta^{\mathsf{X},k}_{M,T}\in \ML $ and $\zeta^{\mathsf{X},k}_{M,T},\theta^{\mathsf{X},k}_{M,T}\in \ML(\subseteq)$ such that:
\begin{enumerate}[label=(\roman*)]
    \item \label{lemma:team_bisimulation_i} $M^\prime,T^\prime\models\zeta^{\mathsf{X},k}_{M,T}$ iff $ T'=\emptyset$ or for all $ w\in T $ there is a $ w^\prime\in T^\prime$ such that $M,w\leftrightarroweq^\mathsf{X}_k M^\prime,w^\prime. $
\item \label{lemma:team_bisimulation_ii}
    $M^\prime, T^\prime\models \eta^{\mathsf{X},k}_{M,T}$ iff for all $w^\prime\in T^\prime $ there is a $ w\in T$ such that $M,w\leftrightarroweq^\mathsf{X}_k M^\prime, w^\prime.$
\item \label{lemma:team_bisimulation_iii} $M^\prime, T^\prime\models \theta^{\mathsf{X},k}_{M,T}$ iff $  M,T\leftrightarroweq^\mathsf{X}_k M^\prime, T^\prime\text{ or } T^\prime=\emptyset.$
\end{enumerate}
\end{lemma}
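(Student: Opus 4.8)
The plan is to define the three characteristic formulas explicitly from the Hintikka formulas $\chi^{\mathsf{X},k}_{M,w}$ of Theorem \ref{thm:world_bisimulation} and then verify the three equivalences directly; no further induction is required, the inductive work already being contained in the Hintikka formulas themselves. Concretely, I would set
\[
\eta^{\mathsf{X},k}_{M,T}:=\bigvee_{w\in T}\chi^{\mathsf{X},k}_{M,w},\qquad
\zeta^{\mathsf{X},k}_{M,T}:=\bigwedge_{w\in T}\bigl(\top\subseteq\chi^{\mathsf{X},k}_{M,w}\bigr),\qquad
\theta^{\mathsf{X},k}_{M,T}:=\eta^{\mathsf{X},k}_{M,T}\land\zeta^{\mathsf{X},k}_{M,T},
\]
reading the empty disjunction as $\bot$ and the empty conjunction as $\top$. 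Since each $\chi^{\mathsf{X},k}_{M,w}$ is a single classical formula with $md(\chi^{\mathsf{X},k}_{M,w})\leq k$, the formula $\zeta^{\mathsf{X},k}_{M,T}$ contains only unary top inclusion atoms (this is the simplification over \cite{hella2015} alluded to above) and all three formulas have modal depth at most $k$, which will matter when the formulas are deployed in the expressive completeness argument.

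For item \ref{lemma:team_bisimulation_ii}: a disjunction of classical formulas is again classical, hence flat, so $M',T'\models\eta^{\mathsf{X},k}_{M,T}$ iff $M',w'\models\bigvee_{w\in T}\chi^{\mathsf{X},k}_{M,w}$ for every $w'\in T'$, that is, iff for every $w'\in T'$ there is $w\in T$ with $M',w'\models\chi^{\mathsf{X},k}_{M,w}$, which by Theorem \ref{thm:world_bisimulation} says precisely that $M,w\leftrightarroweq^\mathsf{X}_k M',w'$; in the degenerate case $T=\emptyset$ the formula is $\bot$, true exactly on the empty team, again as required. For item \ref{lemma:team_bisimulation_i}: on the empty team $\zeta^{\mathsf{X},k}_{M,T}$ holds by the empty team property (matching the first disjunct), while for $T'\neq\emptyset$ Proposition \ref{prop:inclfact} gives $M',T'\models\top\subseteq\chi^{\mathsf{X},k}_{M,w}$ iff some $w'\in T'$ satisfies $\chi^{\mathsf{X},k}_{M,w}$, i.e., by Theorem \ref{thm:world_bisimulation}, iff some $w'\in T'$ has $M,w\leftrightarroweq^\mathsf{X}_k M',w'$; conjoining over $w\in T$ yields the stated condition. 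Item \ref{lemma:team_bisimulation_iii} then follows at once: on a nonempty $T'$ the two conjuncts $\eta^{\mathsf{X},k}_{M,T}$ and $\zeta^{\mathsf{X},k}_{M,T}$ express exactly the Back and Forth conditions of Definition \ref{teamkbisimdef}, hence $M,T\leftrightarroweq^\mathsf{X}_k M',T'$, and on the empty team $\theta^{\mathsf{X},k}_{M,T}$ holds by the empty team property. (Replacing each $\top\subseteq\chi^{\mathsf{X},k}_{M,w}$ by $\triangledown\chi^{\mathsf{X},k}_{M,w}$ or $\DotDiamond\chi^{\mathsf{X},k}_{M,w}$ yields, via Fact \ref{equi_might}, the analogues in $\ML(\triangledown)$ and $\ML(\DotDiamond)$.)

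The one point that genuinely needs care --- more a bookkeeping matter than a real obstacle --- is well-definedness: $T$ may be infinite, so the displayed conjunction and disjunction are prima facie infinitary. This is dealt with exactly as for the Hintikka formulas (see footnote \ref{footnote:finite_representatives}): for fixed finite $\mathsf{X}$ there are, up to logical equivalence, only finitely many formulas $\chi^{\mathsf{X},k}_{M,w}$, so one first replaces $T$ by a finite subset meeting every $\leftrightarroweq^\mathsf{X}_k$-class represented in $T$; this changes neither $\eta^{\mathsf{X},k}_{M,T}$ nor $\zeta^{\mathsf{X},k}_{M,T}$ up to equivalence, and the choice of representatives is immaterial. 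Everything else is a routine unwinding of the semantic clauses using Theorem \ref{thm:world_bisimulation}, Proposition \ref{prop:inclfact}, flatness of classical formulas, and the empty team property.
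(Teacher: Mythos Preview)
Your proposal is correct and matches the paper's proof essentially verbatim: the paper defines $\zeta^{\mathsf{X},k}_{M,T}:=\bigwedge_{w\in T}(\top\subseteq\chi^{\mathsf{X},k}_{M,w})$, $\eta^{\mathsf{X},k}_{M,T}:=\bigvee_{w\in T}\chi^{\mathsf{X},k}_{M,w}$, and $\theta^{\mathsf{X},k}_{M,T}:=\eta^{\mathsf{X},k}_{M,T}\land\zeta^{\mathsf{X},k}_{M,T}$, verifies \ref{lemma:team_bisimulation_i} via the semantics of the top inclusion atom (Proposition~\ref{prop:inclfact}) and Theorem~\ref{thm:world_bisimulation}, verifies \ref{lemma:team_bisimulation_ii} via flatness and Theorem~\ref{thm:world_bisimulation}, and obtains \ref{lemma:team_bisimulation_iii} by conjoining; finiteness is handled just as you describe.
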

\begin{proof}
\begin{enumerate}[label=(\roman*)]
    \item Let $$\zeta^{\mathsf{X},k}_{M,T}:=\bigwedge_{w\in T}(\top\subseteq \chi^{\mathsf{X},k}_{M,w}),$$  where we stipulate $\bigwedge\emptyset=\top$. Since there are only a finite number of non-equivalent $k$th Hintikka-formulas for a given finite $\mathsf{X}$, we can assume the conjunction $\bigwedge_{w\in T}(\top\subseteq \chi^{\mathsf{X},k}_{M,w})$ to be finite and $\zeta^k_{M,T}$ to be well-defined. If $T^\prime\neq\emptyset$, then: 
    \reqnomode
\begin{align*} 
T^\prime\models\bigwedge_{w\in T}(\top\subseteq \chi^k_{w})&\iff\forall w\in T:T^\prime\models \top\subseteq \chi^k_{w} \\
&\iff\forall w\in T\ \forall v^\prime\in T^\prime\ \exists w^\prime\in T^\prime: v^\prime\models \top\iff  w^\prime\models \chi^k_{w} \\  
    &\iff \forall w\in T\ \exists w^\prime\in T^\prime: w^\prime\models \chi^k_{w} \tag{\text{Since }$T^\prime\neq\emptyset$}\\
    &\iff \forall w\in T\ \exists w^\prime\in T^\prime: w\leftrightarroweq_k w^\prime. \tag{\text{Theorem } \ref{thm:world_bisimulation}}
\end{align*}
    
    \item Let $$\eta^{\mathsf{X},k}_{M,T}:=\bigvee_{w\in T}\chi^{\mathsf{X},k}_{M,w},$$
    where we stipulate $\bigvee\emptyset=\bot$.
    \begin{align*} 
T^\prime\models\bigvee_{w\in T}\chi^k_{w}&\iff T'=\bigcup_{w\in  T}T'_w \text{ where }T'_w\models \chi^k_w \\
&\iff\forall w^\prime\in T^\prime\ \exists w\in T: w'\models \chi^k_w &&\tag{By flatness}\\
&\iff\forall w^\prime\in T^\prime\ \exists w\in T: w\leftrightarroweq_k w^\prime. &&\tag{\text{Theorem } \ref{thm:world_bisimulation}}
\end{align*}
\leqnomode

\item Let
\begin{equation*}
    \theta^{\mathsf{X},k}_{M,T}:=\eta^{\mathsf{X},k}_{M,T}\land\zeta^{\mathsf{X},k}_{M,T}.
\end{equation*}
If $T^\prime=\emptyset$, then $T'\models \theta^k_T$ by the empty team property. Else if $T^\prime\neq\emptyset$, then $T^\prime\models\eta^k_{T}\land\zeta^k_{T}$ if and only if the back and forth conditions in Definition \ref{teamkbisimdef} hold (by items \ref{lemma:team_bisimulation_i} and \ref{lemma:team_bisimulation_ii}), which is the case if and only if $T\leftrightarroweq_k T^\prime$. \qedhere
\end{enumerate}
\end{proof}

We call the characteristic formulas $\theta^k_T$ for teams obtained in Lemma \ref{lemma:team_bisimulation} \ref{lemma:team_bisimulation_iii} \emph{Hintikka formulas for teams}. As with standard Hintikka formulas, it is easy to see that for a given finite $\mathsf{X}$, there are only a finite number of non-equivalent $k$th Hintikka formulas for teams.

We are now ready to show also the inclusion 
$\mathbb{U}_\mathsf{X}\subseteq\{\left\Vert\phi\right\Vert_\mathsf{X}\mid \phi \in \mathcal{\ML(\subseteq)}$ and $\mathsf{Prop}(\phi)=\mathsf{X}\}$.
\begin{theorem}[\cite{hella2015}] \label{theorem:expressive_completeness_MIL} $\mathbb{U}=\left\Vert\mathcal{\ML(\subseteq)}\right\Vert$. That is, for each finite $\mathsf{X}\subseteq\mathsf{Prop}$, $\mathbb{U}_\mathsf{X}=\{\mathcal{C}$ over $\mathsf{X}\mid \mathcal{C}$ is union closed, has the empty team property, and is invariant under $\leftrightarroweq^\mathsf{X}_k$ for some $k\in \mathbb{N}\}=\{\left\Vert\phi\right\Vert_{\mathsf{X}}\mid\phi\in\ML(\subseteq)$ and $\mathsf{Prop}(\phi)=\mathsf{X}\}$.
\end{theorem}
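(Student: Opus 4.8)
`\documentclass[default,12pt]{sn-jnl}`\textbf{Plan.} The excerpt has already settled the inclusion $\{\left\Vert\phi\right\Vert_\mathsf{X}\mid \phi\in\ML(\subseteq),\ \mathsf{Prop}(\phi)=\mathsf{X}\}\subseteq\mathbb{U}_\mathsf{X}$, so what remains is the converse: given $\mathcal{P}\in\mathbb{U}_\mathsf{X}$, say invariant under $\leftrightarroweq^\mathsf{X}_k$, I will produce an $\ML(\subseteq)$-formula $\psi$ with $\mathsf{Prop}(\psi)=\mathsf{X}$ and $\left\Vert\psi\right\Vert_\mathsf{X}=\mathcal{P}$. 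The formula is assembled from the Hintikka formulas for worlds via Lemma~\ref{lemma:team_bisimulation}, and since every $\left\Vert\phi\right\Vert_\mathsf{X}$ lies in $\mathbb{U}_\mathsf{X}$, the same construction shows that every $\ML(\subseteq)$-formula is equivalent to one of this shape, i.e.\ it yields the advertised normal form.

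\textbf{Setup.} Fix representatives $w_1,\dots,w_m$ of the finitely many $\leftrightarroweq^\mathsf{X}_k$-classes of pointed models over $\mathsf{X}$ and write $\chi_i:=\chi^{\mathsf{X},k}_{M,w_i}$. By Theorem~\ref{thm:world_bisimulation} every world $w'$ of a model over $\mathsf{X}$ satisfies exactly one $\chi_i$, and by the back/forth conditions two teams over $\mathsf{X}$ are $\leftrightarroweq^\mathsf{X}_k$-related iff they \emph{realize} the same index set $A\subseteq\{1,\dots,m\}$ (where $T'$ realizes $i$ iff some $w'\in T'$ satisfies $\chi_i$). For $A\subseteq\{1,\dots,m\}$ set
$$\theta_A:=\Big(\bigvee_{i\in A}\chi_i\Big)\wedge\bigwedge_{i\in A}(\top\subseteq\chi_i),$$
which up to equivalence is the Hintikka formula $\theta^{\mathsf{X},k}_{M,T}$ of any model with team over $\mathsf{X}$ realizing exactly $A$; hence by Lemma~\ref{lemma:team_bisimulation}\ref{lemma:team_bisimulation_iii}, $M',T'\models\theta_A$ iff $T'=\emptyset$ or $T'$ realizes exactly $A$. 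Let $\mathcal{A}:=\{A\mid (M,T)\in\mathcal{P}$ for some (equivalently, by invariance, every) model with team over $\mathsf{X}$ whose team realizes exactly $A\}$, and put $\psi:=\bigvee_{A\in\mathcal{A}}\theta_A$; note $\emptyset\in\mathcal{A}$ by the empty team property of $\mathcal{P}$, and $md(\psi)\leq k$.

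\textbf{Key step and verification.} First I would show $\mathcal{A}$ is closed under unions. Given $A,B\in\mathcal{A}$, take witnesses $(M_A,T_A),(M_B,T_B)\in\mathcal{P}$; passing to the disjoint union $M:=M_A\sqcup M_B$ alters no team's $\leftrightarroweq^\mathsf{X}_k$-class (a team only sees its own component), so $(M,T_A),(M,T_B)\in\mathcal{P}$ by invariance, whence $(M,T_A\cup T_B)\in\mathcal{P}$ by union closure, and $T_A\cup T_B$ realizes exactly $A\cup B$. Then $\left\Vert\psi\right\Vert_\mathsf{X}=\mathcal{P}$: if $(M',T')\in\mathcal{P}$ and $A$ is the index set realized by $T'$, then $A\in\mathcal{A}$ and $T'\models\theta_A$, so $T'\models\psi$; conversely, if $M',T'\models\psi$ then $T'=\bigcup_{A\in\mathcal{A}}T'_A$ with $T'_A\models\theta_A$, and either all $T'_A$ are empty (so $T'=\emptyset\in\mathcal{P}$) or each nonempty $T'_A$ realizes exactly $A$, so $T'$ realizes exactly $B:=\bigcup\{A\mid T'_A\neq\emptyset\}\in\mathcal{A}$ by the key step, forcing $(M',T')\in\mathcal{P}$ by invariance. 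A last bookkeeping point is to ensure $\mathsf{Prop}(\psi)=\mathsf{X}$ exactly: each $\chi_i$ already mentions every $p\in\mathsf{X}$ (in its depth-$0$ part), so $\psi$ does too as soon as some member of $\mathcal{A}$ is nonempty; in the degenerate case $\mathcal{A}=\{\emptyset\}$ one replaces the resulting $\bot$ by $\bot\wedge\bigwedge_{p\in\mathsf{X}}(p\vee\neg p)\equiv\bot$.

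\textbf{Main obstacle.} The delicate part is not writing down $\psi$ but checking it defines exactly $\mathcal{P}$ and nothing larger. Since $\vee$ in $\ML(\subseteq)$ is the team-splitting (tensor) disjunction, $\bigvee_{A\in\mathcal{A}}\theta_A$ is satisfied precisely by teams realizing \emph{some union} of members of $\mathcal{A}$; this is exactly why union closure of $\mathcal{P}$ is both used and needed, entering the argument as closure of $\mathcal{A}$ under $\cup$. The one genuinely non-routine manoeuvre supporting this is the passage to a disjoint union of models, which lets one place the two witness teams $T_A,T_B$ inside a common model without disturbing their bisimulation types so that union closure of $\mathcal{P}$ can be applied.
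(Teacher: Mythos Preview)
Your proof is correct and follows the same overall strategy as the paper: define the candidate formula as the disjunction of the team Hintikka formulas $\theta^{\mathsf{X},k}_{M,T}$ over $(M,T)\in\mathcal{P}$ (equivalently, over the index sets $A\in\mathcal{A}$), then verify both inclusions.

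The one noteworthy difference is in the verification that $\left\Vert\psi\right\Vert_\mathsf{X}\subseteq\mathcal{P}$. You first prove that $\mathcal{A}$ is closed under unions via the disjoint-union-of-models trick, then use this to argue that any $T'\models\psi$ realizes some $B\in\mathcal{A}$. The paper avoids this detour entirely: if $T'\models\psi$, then $T'=\bigcup_A T'_A$ with each nonempty $T'_A$ $k$-bisimilar to some team in $\mathcal{P}$, so by invariance each $(M',T'_A)\in\mathcal{P}$ \emph{already in the model $M'$}, and then union closure of $\mathcal{P}$ applied directly to these subteams of $M'$ gives $(M',T')\in\mathcal{P}$. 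So the step you flag as the ``one genuinely non-routine manoeuvre'' is in fact dispensable---union closure does all the work without needing to manufacture a common ambient model for the witnesses. Your route is not wrong, just longer than necessary.
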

\begin{proof}
The direction $\supseteq$ follows by Theorem \ref{thm:invariance} and the fact that formulas in 
$\ML(\subseteq)$ are union closed and have the empty team property.

For the direction $\subseteq$, let $\mathcal{C}\in \mathbb{U_\mathsf{X}}$, and let $k\in\mathbb{N}$ be such that $\mathcal{C}$ is invariant under $\leftrightarroweq^\mathsf{X}_k$. 
Let
\begin{equation*}
    \phi':=\bigvee_{(M,T)\in\mathcal{C}}\theta^{\mathsf{X},k}_{M,T},
\end{equation*}
where $\theta^{\mathsf{X},k}_{M,T}$ is defined as in the proof of Lemma \ref{lemma:team_bisimulation} \ref{lemma:team_bisimulation_iii}. Since there are only finitely many non-equivalent $k$th Hintikka formulas for teams for a given finite $\mathsf{X}$, we may assume the disjunction in $\phi'$ to be finite and $\phi'$ to be well-defined. We show $\mathcal{C}=\left\Vert\phi'\right\Vert_\mathsf{X}$. Note that since both $\mathcal{C}$ and $\left\Vert\phi'\right\Vert_\mathsf{X}$ have the empty team property, neither is empty.

Let $(M,T)\in\mathcal{C}$. Clearly $T\leftrightarroweq_k T$, so by Lemma \ref{lemma:team_bisimulation} \ref{lemma:team_bisimulation_iii}, $T\models\theta^k_{T}$, whence $T\models\phi^\prime$.

Now let $M^\prime,T^\prime\models\phi^\prime$. Then there are subteams $T_{T}^\prime\subseteq T^\prime$ such that $T^\prime=\bigcup_{(M,T)\in \mathcal{C}}T_{T}^\prime$ and $M^\prime,T_{T}^\prime\models\theta^k_{T}$. By Lemma \ref{lemma:team_bisimulation} \ref{lemma:team_bisimulation_iii} it follows that for a given $(M,T)\in \mathcal{C}$, either $M,T\leftrightarroweq_k M^\prime, T_{T}^\prime$ or $T_{T}^\prime=\emptyset$. If $T_{T}^\prime=\emptyset$, then by the empty team property $(M^\prime, T_{T}^\prime)\in \mathcal{C}$. If $M,T\leftrightarroweq_k M^\prime, T_{T}^\prime$, then since $\mathcal{C}$ is invariant under $k$-bisimulation, $(M^\prime, T_{T}^\prime)\in\mathcal{C}$. So for all $(M,T)\in \mathcal{C}$ we have $(M^\prime, T_{T}^\prime)\in\mathcal{C}$; then since $\mathcal{C}$ is closed under unions, we conclude that $(M^\prime, T^\prime)\in\mathcal{C}$. 
\end{proof}

It follows from the proof of Theorem \ref{theorem:expressive_completeness_MIL} that each $\ML(\subseteq)$-formula is equivalent to a formula in the normal form \begin{equation*}\label{NF} \tag*{(NF)}
\bigvee_{(M,T)\in\mathcal{C}}\theta^k_{T}=    \bigvee_{(M,T)\in\mathcal{C}}(\bigvee_{w\in T}\chi^k_{w}\land\bigwedge_{w\in T}(\top\subseteq\chi^k_{w})).   
\end{equation*}
Note that in this normal form, the extended inclusion atoms $\top \subseteq \chi^k_{w}$ play a substantial role that cannot be replicated using inclusion atoms \emph{simpliciter}, i.e., inclusion atoms $p_1\dots p_n\subseteq q_1\dots q_n$ with propositional symbols only. It was observed in \cite{yang2022} that the variant of propositional inclusion logic with only these simpler inclusion atoms is strictly less expressive than extended propositional inclusion logic, as, e.g., in one propositional symbol $p$, the former contains only one (trivial) inclusion atom $p\subseteq p$ and is thus flat, while the latter can express non-flat properties using extended inclusion atoms such as $\top\subseteq p$ and $\bot \subseteq p$. The analogous fact clearly also holds in the modal case.

Observe also that only unary top inclusion atoms (atoms of the form $\top \subseteq \alpha$) are required in the above normal form to express any given property in $\mathbb{U}$. Given that $\top \subseteq \alpha\equiv  \triangledown \alpha \equiv \DotDiamond \alpha$ (Fact \ref{equi_might}), we can derive the expressive completeness of $ \ML(\triangledown)$ as well as that of $\ML(\DotDiamond)$ as a corollary to Theorem \ref{theorem:expressive_completeness_MIL}.
\begin{theorem} \label{theorem:expressive_completeness_might}$\mathbb{U}=\left\Vert\ML(\triangledown)\right\Vert=\left\Vert\ML(\DotDiamond)\right\Vert$.
\end{theorem}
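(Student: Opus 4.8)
The plan is to obtain this as a corollary of Theorem \ref{theorem:expressive_completeness_MIL} together with Fact \ref{equi_might}. For the inclusions $\left\Vert\ML(\triangledown)\right\Vert\subseteq\mathbb{U}$ and $\left\Vert\ML(\DotDiamond)\right\Vert\subseteq\mathbb{U}$ I would argue exactly as in the $\supseteq$ direction of Theorem \ref{theorem:expressive_completeness_MIL}: any formula $\phi$ of $\ML(\triangledown)$ or $\ML(\DotDiamond)$ is union closed, has the empty team property, and, by Theorem \ref{thm:invariance}, is invariant under $\mathsf{X},k$-bisimulation for any $k\geq md(\phi)$; hence $\left\Vert\phi\right\Vert_{\mathsf{X}}\in\mathbb{U}_{\mathsf{X}}$.

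For the converse inclusions, let $\mathsf{X}\subseteq\mathsf{Prop}$ be finite and $\mathcal{C}\in\mathbb{U}_{\mathsf{X}}$. By (the proof of) Theorem \ref{theorem:expressive_completeness_MIL}, $\mathcal{C}=\left\Vert\phi'\right\Vert_{\mathsf{X}}$ where $\phi'$ is the formula in the normal form \ref{NF}. The key observation is that the only inclusion atoms occurring in \ref{NF} are the unary top inclusion atoms $\top\subseteq\chi^k_w$. Let $\phi'_{\triangledown}$ (resp.\ $\phi'_{\DotDiamond}$) be obtained from $\phi'$ by replacing every occurrence of $\top\subseteq\chi^k_w$ with $\triangledown\chi^k_w$ (resp.\ $\DotDiamond\chi^k_w$); this is a formula of $\ML(\triangledown)$ (resp.\ $\ML(\DotDiamond)$) with the same propositional symbols as $\phi'$, so $\mathsf{Prop}(\phi'_{\triangledown})=\mathsf{Prop}(\phi'_{\DotDiamond})=\mathsf{X}$. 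By Fact \ref{equi_might} we have $\top\subseteq\chi^k_w\equiv\triangledown\chi^k_w\equiv\DotDiamond\chi^k_w$, and since the team semantics of all three logics is compositional, replacing a subformula by an equivalent one preserves equivalence; hence $\phi'_{\triangledown}\equiv\phi'\equiv\phi'_{\DotDiamond}$ and therefore $\left\Vert\phi'_{\triangledown}\right\Vert_{\mathsf{X}}=\left\Vert\phi'_{\DotDiamond}\right\Vert_{\mathsf{X}}=\mathcal{C}$. Combining the two directions yields $\mathbb{U}=\left\Vert\ML(\triangledown)\right\Vert=\left\Vert\ML(\DotDiamond)\right\Vert$.

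There is no genuine obstacle here; the only point meriting a word of care is the replacement-of-equivalents principle, which — though routine — relies on compositionality of the satisfaction clauses, something one checks by inspection for each of the three logics. If one preferred to avoid invoking a general congruence lemma, one could instead rerun the argument of Theorem \ref{theorem:expressive_completeness_MIL} verbatim, noting that inclusion atoms enter only through Lemma \ref{lemma:team_bisimulation}\ref{lemma:team_bisimulation_i}, whose proof uses $\top\subseteq\chi^k_w$ solely via the equivalence of Fact \ref{equi_might}; replacing these atoms by $\triangledown\chi^k_w$ or $\DotDiamond\chi^k_w$ throughout produces directly the characteristic (Hintikka) formulas for teams in $\ML(\triangledown)$ and in $\ML(\DotDiamond)$, and the rest of the proof goes through unchanged.
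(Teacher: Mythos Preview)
Your proposal is correct and matches the paper's own approach: the paper derives the result as a corollary of Theorem~\ref{theorem:expressive_completeness_MIL} by observing that the normal form \ref{NF} uses only unary top inclusion atoms $\top\subseteq\chi^k_w$, which by Fact~\ref{equi_might} may be replaced by $\triangledown\chi^k_w$ or $\DotDiamond\chi^k_w$. Your remarks about compositionality and the alternative of rerunning Lemma~\ref{lemma:team_bisimulation} are apt but go slightly beyond what the paper spells out.
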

And we clearly have the following normal forms for these logics:
\begin{align*}\tag*{($\triangledown$NF)}
    \bigvee_{(M,T)\in\mathcal{C}}(\bigvee_{w\in T}\chi^k_{w}\land\bigwedge_{w\in T}\triangledown\chi^k_{w}).\\
    \tag*{($\DotDiamond$NF)}\bigvee_{(M,T)\in\mathcal{C}}(\bigvee_{w\in T}\chi^k_{w}\land\bigwedge_{w\in T}\DotDiamond\chi^k_{w}).   
\end{align*}
The expressive completeness of $\ML(\triangledown)$ was also proved in \cite{hella2015}; we have added the result for $\ML(\DotDiamond)$. Let us briefly comment on how the structure of the normal form motivates the introduction of the operator $\DotDiamond$. Given a team $T$, the formula $\zeta^k_T=\bigwedge_{w\in T}(\top\subseteq \chi^{k}_{w})$ is intended to express the forth-condition of $\leftrightarroweq_k$ from $T$ to a team $T'$ in the sense that $T'\models \zeta^k_T$ iff for all $w\in T$ there is a $w'\in T$ such that $w\leftrightarroweq_k w'$ (see the proof of Lemma \ref{lemma:team_bisimulation} \ref{lemma:team_bisimulation_i}). The formula $\bigwedge_{w\in T}\DotDiamond \chi^k_w$ articulates precisely what $T'$ needs to satisfy in order to fulfill this condition. Inclusion atoms are clearly stronger than is strictly necessary if we only wish to express this condition. The operator $\triangledown$ is also \emph{prima facie} stronger than $\DotDiamond$ in the sense that for any $\phi\in \ML(\DotDiamond)$, there is an $\alpha \in \ML$ such that $\DotDiamond \phi \equiv \DotDiamond \alpha$,\footnote{This follows from the fact that $\ML$ is expressively complete for the class of flat properties invariant under $k$-bisimulation for some $k\in \mathbb{N}$ (see, e.g., \cite{yang2017}). A team property $\mathcal{P}$ is \emph{flat} if $(M,T)\in \mathcal{P} \iff (M,\{w\})\in \mathcal{P}$ for all $w\in T$. Given any $\phi\in \ML(\DotDiamond)$, $\{(M,T)\mid M,\{w\}\models \phi$ for all $w\in T\}$ is clearly flat 
and invariant under $k$-bisimulation for $k=md(\phi)$, so by the expressive completeness fact we have $\{(M,T)\mid M,\{w\}\models \phi$ for all $w\in T\}=\left\Vert\alpha \right\Vert$ for some $\alpha\in \ML$, and then  $\DotDiamond\phi \equiv\DotDiamond\alpha $.}
 whereas, for instance, there is no $\alpha\in \ML $ such that $\triangledown (\triangledown p \land \triangledown q)\equiv \triangledown \alpha$---the singular might, in a sense, cancels out non-classical content within its scope. It is interesting, therefore, that $\ML(\DotDiamond)$ has the same expressive power as the other two logics. 

One important corollary to the results in this section is that the logics we consider are compact. This follows from the fact that any team property invariant under bounded bisimulation can be expressed in classical first-order logic---see \cite{kontinen2015}. (It should be noted that for many team-based logics, compactness formulated in terms of satisfiability need not coincide with compactness formulated in terms of entailment, but the logics we consider are compact in both senses; see the \hyperref[secA1]{Appendix} for further discussion.)
It also follows readily from the expressive completeness results that 
each of the three logics admits uniform interpolation. It was proved in \cite{dagostino2019} that any team-based 
modal logic which is \emph{local} and \emph{forgetting} admits uniform interpolation. The logics we consider are all local, and it follows from the expressive completeness theorems that they are also forgetting; they therefore admit uniform interpolation. We refer the reader to \cite{dagostino2019} for the detailed proof and more discussion about the notion of uniform interpolation; see \cite{yang2022} for clarification on the role of locality.

\section{Axiomatizations}\label{b} \label{section:axiomatizations}
In this section, we introduce sound and complete natural deduction systems for the logics $\ML(\subseteq)$, $\ML(\triangledown)$ and $\mathcal{ML}(\DotDiamond)$. We prove the completeness of the axiomatizations by means of a strategy commonly used for propositional and modal team-based logics which involves showing that each formula is provably equivalent to its normal form. This strategy is used in \cite{yang2022} for the propositional inclusion logic system; we also adapt many of the lemmas and other details of the proof presented in \cite{yang2022}.

\subsection{$\ML(\subseteq)$}\label{t}

We start by axiomatizing our core logic, modal inclusion logic $\ML(\subseteq)$. Our natural deduction system for $\ML(\subseteq)$ comprises standard rules for connectives and modalities for the smallest normal modal logic $\mathcal{K}$, modified to account for special features of our setting such as the failure of downward closure, as well as rules governing the behaviour of inclusion atoms and their interaction with the other connectives. Note that since $\ML(\subseteq)$ is not closed under uniform substitution (see Section \ref{section:preliminaries}), the system does not admit the usual uniform substitution rule.

\begin{definition}
The natural deduction system for $\ML(\subseteq)$ consists of all axioms and rules presented in Tables \ref{table:classical_connectives}-\ref{table:modal_inclusion}. We write $\Gamma\vdash_{\ML(\subseteq)}\phi$ (or simply $\Gamma\vdash \phi$) if $\phi$ is derivable from formulas in $\Gamma$ using this system. We write $\phi\vdash \psi$ for $\{\phi\}\vdash \psi$ and $\vdash \phi$ for $\emptyset\vdash \phi$, and say that $\phi$ and $\psi$ are \emph{provably equivalent}, denoted by $\phi\dashv\vdash\psi$, if $\phi\vdash\psi$ and $\psi\vdash\phi$.
\end{definition}

\begin{table}[h]\centering \small
  \renewcommand*{\arraystretch}{3.8}
\begin{tabular*}{\linewidth}{@{\extracolsep{\fill}}|ccc|}
\hline
     \AxiomC{$[\alpha]$}
     \noLine
     \UnaryInfC{$D_0$} 
     \noLine     
     \UnaryInfC{$\bot$}
     \RightLabel{$\neg$I\scriptsize(1)}
\UnaryInfC{$\neg\alpha$}
\DisplayProof
&
  \AxiomC{$[\neg\alpha]$} 
  \noLine
  \UnaryInfC{$D_0$}
  \noLine
  \UnaryInfC{$\bot$}
  \RightLabel{RAA\scriptsize(1)}
 \UnaryInfC{$\alpha$}  
 \DisplayProof
& \AxiomC{$D_0$}
 \noLine
 \UnaryInfC{$\alpha$}
  \AxiomC{$D_1$}
 \noLine
 \UnaryInfC{$\neg\alpha$} \RightLabel{$\neg$E}
 \BinaryInfC{$\phi$}
 \DisplayProof
 \\ [2ex] 
\AxiomC{$D$}
\noLine
\UnaryInfC{$\phi$}
\RightLabel{$\lor$I}
\UnaryInfC{$\phi\lor\psi$}
\DisplayProof
& 
\AxiomC{$D$}
\noLine
\UnaryInfC{$\psi$}
\RightLabel{$\lor$I}
\UnaryInfC{$\phi\lor\psi$}
\DisplayProof &  \AxiomC{$D$}
\noLine
\UnaryInfC{$\phi \lor \psi$}
\noLine
\AxiomC{[$\phi$]}
\noLine
\UnaryInfC{$D_0$}
\noLine
\UnaryInfC{$\chi$}
\AxiomC{[$\psi$]}
\noLine
\UnaryInfC{$D_1$}
\noLine
\UnaryInfC{$\chi$}
\RightLabel{$\lor$E\scriptsize(1)}
\TrinaryInfC{$\chi$}
\DisplayProof\\ [2ex] 
\multicolumn{2}{|c}{
\AxiomC{$D_0$}
\noLine
\UnaryInfC{$\phi$}
\AxiomC{$D_1$}
\noLine
\UnaryInfC{$\psi$}
\RightLabel{$\land$I}
\BinaryInfC{$\phi\land\psi$}
\DisplayProof}
&
\AxiomC{$D$}
\noLine
\UnaryInfC{$\phi \land \psi$}
\RightLabel{$\land$E}
\UnaryInfC{$\phi$}
\DisplayProof
\hskip 1.5em
\AxiomC{$D$}
\noLine
\UnaryInfC{$\phi \land \psi$}
\RightLabel{$\land$E}
\UnaryInfC{$\psi$}
\DisplayProof \\ 

\AxiomC{$D$}
\noLine
\UnaryInfC{$\neg\Box\alpha$}
\RightLabel{$\Diamond\Box$Inter}
\UnaryInfC{$\Diamond\neg\alpha$}
\DisplayProof
&

\AxiomC{$D$}
\noLine
\UnaryInfC{$\Diamond\neg\alpha$}
\RightLabel{$\Box\Diamond$Inter}
\UnaryInfC{$\neg\Box\alpha$}
\DisplayProof 
&

\AxiomC{$D$}
\noLine
\UnaryInfC{$\Diamond(\phi\lor\psi)$}
\RightLabel{$\Diamond\lor$Distr}
\UnaryInfC{$\Diamond\phi\lor\Diamond\psi$}
\DisplayProof \\[5ex]

\multicolumn{3}{|l|}{\kern-3em\AxiomC{$[\phi_1]$}
\AxiomC{$\dots$}
\AxiomC{$[\phi_n]$}
\noLine
\TrinaryInfC{$D_0$\ \ }
\noLine
\UnaryInfC{$\ddots\ $\vdots$\ \reflectbox{$\ddots$}\ \ $}
\noLine
\UnaryInfC{$\psi\ \ $}

\AxiomC{$D_1$}
\noLine
\UnaryInfC{$\Box\phi_1$}

\AxiomC{$\dots$}
\noLine
\UnaryInfC{}
\noLine
\UnaryInfC{}
\noLine
\UnaryInfC{$\dots$}

\AxiomC{$D_n$}
\noLine
\UnaryInfC{$\Box\phi_n$}

\RightLabel{$\Box$Mon\scriptsize(2)}
\QuaternaryInfC{$\Box\psi$}
\DisplayProof

\kern-1em\AxiomC{$[\phi]$}
\noLine
\UnaryInfC{$D_0$}
\noLine
\UnaryInfC{$\psi$}
\AxiomC{$D_1$}
\noLine
\UnaryInfC{$\Diamond\phi$}
\RightLabel{$\Diamond$Mon\scriptsize(2)}
\BinaryInfC{$\Diamond\psi$}
\DisplayProof}   \\
 \multicolumn{3}{|l|}{\makecell{(1) The undischarged assumptions in $D_0$ and $D_1$ are $\ML$-formulas. \\
 (2) $D_0$ has no undischarged assumptions.\,\,\,\,\,\,\,\,\,\,\,\,\,\,\,\,\,\,\,\,\,\,\,\,\,\,\,\,\,\,\,\,\,\,\,\,\,\,\,\,\,\,\,\,\,\,\,\,\,\,\,\,\,\,\,\,\,\,\,\,\,\,\,\,\,\,\,\,\,}}\\
 \hline
\end{tabular*}
 \caption{Rules for classical modal logic.}
\label{table:classical_connectives}
\end{table}  
 
\begin{table}[h]\centering \small
  \renewcommand*{\arraystretch}{3.8}
\begin{tabular*}{\linewidth}{@{\extracolsep{\fill}}| c c |}
       \hline
\AxiomC{}
\RightLabel{$\subseteq$Id}
\UnaryInfC{$\mathsf{a}\subseteq\mathsf{a}$}
\DisplayProof  &
\AxiomC{$D_0$}
\noLine
\UnaryInfC{$\alpha^x$}
\AxiomC{$D_1$}
\noLine
\UnaryInfC{$\mathsf{b}\subseteq\mathsf{c}$}
\RightLabel{$\subseteq$Exp}
\BinaryInfC{$x\mathsf{b}\subseteq\alpha\mathsf{c}$}
\DisplayProof \\ [2ex] 
 \AxiomC{$D_0$}
\noLine
\UnaryInfC{$\neg\mathsf{a}^\mathsf{x}$}
\AxiomC{$D_1$}
\noLine
\UnaryInfC{$\mathsf{x}\subseteq\mathsf{a}$}
\RightLabel{$\subseteq_\neg$E}
\BinaryInfC{$\phi$}
\DisplayProof
& {\AxiomC{$D$}
\noLine
\UnaryInfC{$\mathsf{x}\subseteq\mathsf{a}\lor\psi$}
\AxiomC{$[\mathsf{x}\subseteq\mathsf{a}]$}
\noLine
\UnaryInfC{$D_0$}
\noLine
\UnaryInfC{$\chi$}
\AxiomC{$[\psi]$}
\noLine
\UnaryInfC{$D_1$}
\noLine
\UnaryInfC{$\chi$}
\RightLabel{$\lor_\subseteq$E}
\TrinaryInfC{$\chi$}
\DisplayProof}
\\
  \AxiomC{$D$}
    \noLine
    \UnaryInfC{$\bigwedge_{\mathsf{x}\in\{\top,\bot\}^{|\mathsf{a}|}}(\neg\mathsf{a}^\mathsf{x}\lor \mathsf{x}\subseteq\mathsf{b})$}
    \RightLabel{$\subseteq$Ext}
    \UnaryInfC{$\mathsf{a}\subseteq\mathsf{b}$}
    \DisplayProof &   \AxiomC{$D$}
    \noLine
    \UnaryInfC{$\mathsf{a}\subseteq\mathsf{b}$}
    \RightLabel{$\subseteq$Rdt}    
    \UnaryInfC{$\neg\mathsf{a}^\mathsf{x}\lor \mathsf{x}\subseteq\mathsf{b}$}
    \DisplayProof  
 \\ 
      \multicolumn{2}{|c|}{\AxiomC{$D_0$}\kern-2em
\noLine
    \UnaryInfC{$\phi\lor\psi$}
\AxiomC{$D_1$}
\noLine    
    \UnaryInfC{$ \mathsf{x}_1\subseteq \mathsf{a}_1$}
\AxiomC{$\dots$}
\noLine  
\UnaryInfC{}
\noLine  
\UnaryInfC{}
\noLine  
\UnaryInfC{$\dots$}
\AxiomC{$D_n$}
\noLine    
    \UnaryInfC{$ \mathsf{x}_n\subseteq \mathsf{a}_n$}
\RightLabel{$\subseteq$Distr}
    \QuaternaryInfC{$((\phi\lor \mathsf{a}_1^{\mathsf{x}_1}\lor\dots\lor\mathsf{a}_n^{\mathsf{x}_n})\land  \mathsf{x}_1\subseteq \mathsf{a}_1\land\dots\land \mathsf{x}_n\subseteq \mathsf{a}_n)\lor\psi$}
    \DisplayProof}\\  [2ex]
    \hline
\end{tabular*}
 \caption{Rules for inclusion.}
\label{table:inclusion_rules}
\end{table}

\begin{table}[h]\centering \small
  \renewcommand*{\arraystretch}{3.8}
\begin{tabular*}{\linewidth}{@{\extracolsep{\fill}} |c c |}
\hline
\AxiomC{$D$}
\noLine
\UnaryInfC{$\Diamond(\mathsf{x}\subseteq\mathsf{a})$}
\RightLabel{$\Diamond_\subseteq$Distr}
\UnaryInfC{$\top\subseteq\Diamond\mathsf{a}^{\mathsf{x}}$}
\DisplayProof 
& 
\AxiomC{$D$}
\noLine
\UnaryInfC{$\top\subseteq\Diamond\mathsf{a}^\mathsf{x}$}
\RightLabel{$\Diamond\Box_\subseteq$Exc}
\UnaryInfC{$\Box(\mathsf{x}\subseteq\mathsf{a})$}
\DisplayProof \\[2ex]
 \AxiomC{$D_0$}
    \noLine
    \UnaryInfC{$\top\subseteq\Diamond\beta$}
    \AxiomC{$D_1$}
    \noLine
    \UnaryInfC{$\Box(\mathsf{x}\subseteq\mathsf{a})$}
    \RightLabel{$\Box\Diamond_\subseteq$Exc}
    \BinaryInfC{$\top\subseteq\Diamond\mathsf{a}^\mathsf{x}$}
    \DisplayProof & 
\kern-2em\AxiomC{$D$}
\noLine
\UnaryInfC{$\Box(\mathsf{x}\subseteq\mathsf{a}\lor\psi)$}
\AxiomC{$[\top\subseteq \Diamond\mathsf{a}^\mathsf{x}]$}
\noLine
\UnaryInfC{$D_0$}
\noLine
\UnaryInfC{$\chi$}
\AxiomC{$[\Box\psi]$}
\noLine
\UnaryInfC{$D_1$}
\noLine
\UnaryInfC{$\chi$}
\RightLabel{$\Box\lor_\subseteq$E}
\TrinaryInfC{$\chi$}
\DisplayProof \\
\multicolumn{2}{|c|}
{\kern-2em\AxiomC{$D_0$}
\noLine
\UnaryInfC{$\Diamond\phi$}
\AxiomC{$D_1$}
\noLine
\UnaryInfC{$x_1\subseteq\Diamond\alpha_1$}
\AxiomC{$\dots$}
\noLine  
\UnaryInfC{}
\noLine  
\UnaryInfC{}
\noLine  
\UnaryInfC{$\dots$}
\AxiomC{$D_n$}
\noLine
\UnaryInfC{$x_n\subseteq\Diamond\alpha_n$}
\RightLabel{$\subseteq_\Diamond$Distr}
\QuaternaryInfC{$\Diamond((\phi\lor\alpha_1^{x_1}\vee\ldots \vee\alpha_n^{x_n})\land x_1\subseteq\alpha_1\land \ldots x_n\subseteq \alpha_n)$}
\DisplayProof}  \\ [2ex]
\hline
\end{tabular*}
 \caption{Rules for modal operators and inclusion.}
\label{table:modal_inclusion} 
\end{table}
Table \ref{table:classical_connectives} lists the rules which operate only on the connectives and operators of $\ML$. 
The soundness of the disjunction elimination rule as well as that of the negation rules involving assumptions requires that the undischarged assumptions in the subderivations are downward closed, hence the restriction to $\ML$-formulas. 

Table \ref{table:inclusion_rules} lists the propositional rules for inclusion atoms. All rules in this table are adapted from rules or results in the propositional inclusion logic system in \cite{yang2022}, but we have considerably simplified the propositional system (see Proposition \ref{prop:derivability_results}, in which we show that the full set of rules from \cite{yang2022} is derivable in our system). The rule $\subseteq_\neg$E captures the fact that $\mathsf{x}\subseteq\mathsf{a}$ and $\neg\mathsf{a}^\mathsf{x}$ lead to a contradiction, since (for a nonempty team) $\mathsf{x}\subseteq\mathsf{a}$ implies that $\mathsf{a}^\mathsf{x}$ is true somewhere in the team (see Proposition \ref{prop:inclfact}), while $\neg\mathsf{a}^\mathsf{x}$ implies that $\mathsf{a}^\mathsf{x}$ is not true anywhere in the team. The rules $\subseteq$Ext and $\subseteq$Rdt allow us to reduce arbitrary inclusion atoms to equivalent formulas in which all non-classical subformulas are primitive inclusion atoms (see Lemma \ref{lemma:inclusion_atom_reduction}) and to utilize the properties of classical formulas to derive properties of inclusion atoms (see Proposition \ref{prop:derivability_results}). The rule $\lor_\subseteq$E models the upward closure (modulo the empty team property) of primitive inclusion atoms: if $T\models \mathsf{x}\subseteq \mathsf{a}\vee\psi$ and $T\not \models \psi$, then $T=S\cup U$ where $S\models \mathsf{x}\subseteq \mathsf{a}$, $U\models \psi$ and $S\neq \emptyset$, whence also $T\models \mathsf{x}\subseteq \mathsf{a}$. Primitive inclusion atoms are not downward closed so we do not in general have the converse direction: $(\phi\lor\psi)\land \mathsf{x}\subseteq\mathsf{a}\not\models(\phi\land\mathsf{x}\subseteq\mathsf{a})\lor\psi$. However, $(\phi\lor\psi)\land \mathsf{x}\subseteq\mathsf{a}\models((\phi\lor\mathsf{a}^\mathsf{x})\land\mathsf{x}\subseteq\mathsf{a})\lor\psi$ does hold, as reflected in the rule $\subseteq$Distr.

The rules in Table \ref{table:modal_inclusion} concern modal operators and inclusion atoms.
The rule $\Diamond_\subseteq$Distr allows us to distribute the diamond over the inclusion atom. The converse direction of this rule is not sound, as, e.g., $\top\subseteq\Diamond p \not\models\Diamond(\top\subseteq p)$ (consider a team with a world $w\models \Diamond p$ but without a successor team). If we ensure that there is a successor team by requiring $\Diamond \phi$ to hold for some $\phi$, the converse is sound. The rule $\subseteq_\Diamond$Distr generalizes this fact, and indeed we have:
\begin{align*}
\Diamond\phi,\top\subseteq\Diamond\mathsf{a}^\mathsf{x}&\vdash\Diamond((\phi\lor(\mathsf{a}^\mathsf{x})^\top)\land\top\subseteq\mathsf{a}^\mathsf{x})\tag{$\subseteq_\Diamond $Distr}\\
&\vdash\Diamond(\top\subseteq\mathsf{a}^\mathsf{x}) \tag{$\Diamond $Mon}   
\end{align*}
The rule $\Diamond\Box_\subseteq$Exc allows us to derive a box formula from a top inclusion formula with a diamond formula on the right. The converse is not sound---consider a nonempty team $T$ with $R[T]=\emptyset$. By the empty team property, $T\models\Box(\top\subseteq\alpha)$, whereas  $T\not\models\top\subseteq\Diamond\alpha$, since the worlds in $T$ have no accessible worlds. We can ensure that $R[T]\neq \emptyset$ in case $T\neq \emptyset$ by requiring $\top\subseteq\Diamond\beta$ to hold for some $\beta$---this yields the rule $\Box\Diamond_\subseteq$Exc. The rule $\Box\lor_\subseteq$E is similar to $\lor_\subseteq$E in Table \ref{table:inclusion_rules}, but it applies to box formulas.

\begin{theorem}[Soundness]\label{soundness}
If $\Gamma\vdash\phi$, then $\Gamma\models\phi$.
\end{theorem}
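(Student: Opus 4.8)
The plan is to prove soundness by a routine induction on the length of derivations, checking that each axiom is valid and that each rule preserves the entailment relation $\models$. Concretely, I would show that for every rule of the form ``from premises (possibly with discharged assumptions) $D_1, \dots, D_n$ with conclusions $\psi_1, \dots, \psi_n$, infer $\phi$'', whenever a model $M$ and team $T$ satisfy all the relevant undischarged premises, they also satisfy $\phi$. Since the derivation is built up from these single-step applications, the result follows. For the rules with discharged assumptions (such as $\neg$I, RAA, $\lor$E, $\lor_\subseteq$E, $\Box$Mon, $\Diamond$Mon, $\subseteq_\Diamond$Distr, $\Box\lor_\subseteq$E), the induction hypothesis is applied to the subderivations under the additional assumptions, and one must verify that the side conditions (e.g., that the undischarged assumptions in the subderivations of $\neg$I, RAA, and $\lor$E are classical $\ML$-formulas, and hence flat/downward closed) suffice to make the step sound.

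The work divides into three blocks corresponding to the three tables. For Table~\ref{table:classical_connectives}, the propositional rules are standard; the key point is that the soundness of $\neg$I, RAA, $\neg$E involving subderivations, and of $\lor$E, relies on the downward closure of classical formulas -- this is why the side condition (1) restricts the undischarged assumptions to $\ML$-formulas. The modal rules $\Diamond\Box$Inter and $\Box\Diamond$Inter are justified by the clauses for $\Box$ and $\Diamond$ together with the classical equivalence $\neg\Box\alpha \equiv \Diamond\neg\alpha$ via the flatness of $\alpha$; $\Diamond\lor$Distr follows from unpacking the successor-team condition and splitting; $\Box$Mon and $\Diamond$Mon use the semantics of $\Box$ (evaluation at $R[T]$) and $\Diamond$ (some successor team) together with the induction hypothesis applied to the subderivation $D_0$ (which has no undischarged assumptions, by side condition (2), so the entailment $\psi_1, \dots, \psi_n \models \psi$ in the $\Box$Mon case and $\phi \models \psi$ in the $\Diamond$Mon case applies directly to the relevant team). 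For Table~\ref{table:inclusion_rules}, $\subseteq$Id is immediate; $\subseteq$Exp, $\subseteq_\neg$E use Proposition~\ref{prop:inclfact}; $\subseteq$Ext and $\subseteq$Rdt are exactly Lemma~\ref{lemma:inclusion_atom_reduction}\ref{lemma:inclusion_atom_reduction_i}; $\lor_\subseteq$E encodes the upward closure (modulo empty team) of primitive inclusion atoms as explained in the text preceding the theorem; and $\subseteq$Distr is the entailment $(\phi\lor\psi)\land \mathsf{x}\subseteq\mathsf{a}\models((\phi\lor\mathsf{a}^\mathsf{x})\land\mathsf{x}\subseteq\mathsf{a})\lor\psi$ (and its $n$-fold generalization), which one verifies by a direct team-splitting argument using Proposition~\ref{prop:inclfact}. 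For Table~\ref{table:modal_inclusion}, $\Diamond_\subseteq$Distr, $\Diamond\Box_\subseteq$Exc, $\Box\Diamond_\subseteq$Exc, $\Box\lor_\subseteq$E, and $\subseteq_\Diamond$Distr are each justified by unfolding the $\Box$/$\Diamond$ clauses and applying Proposition~\ref{prop:inclfact}, together with the observations already recorded in the discussion in the text (e.g., that $\Diamond(\mathsf{x}\subseteq\mathsf{a})\models\Box(\mathsf{x}\subseteq\mathsf{a})$, cf.\ Example~\ref{example model}).

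The main obstacle is the soundness of $\subseteq_\Diamond$Distr (and to a lesser extent $\subseteq$Distr and $\Box\lor_\subseteq$E), since here one must carefully track the interaction between a discharged assumption in a subderivation, the existential witness (successor team) for the outer $\Diamond$, and the existential witnesses guaranteed by the primitive inclusion atoms. The argument runs roughly as follows: given $M,T\models \Diamond\phi$, pick a successor team $S$ with $M,S\models\phi$; given $M,T\models x_i\subseteq\Diamond\alpha_i$ for each $i$, use Proposition~\ref{prop:inclfact} to get, for each $i$, a world in $T$ whose successor set contains a world $u_i$ satisfying $\alpha_i$ if $x_i=\top$ (resp.\ $\neg\alpha_i$ if $x_i=\bot$); then form $S' = S\cup\{u_1,\dots,u_n\}$ (adjusting so that $S'$ is still a successor team of $T$), observe that $M,S'\models \phi\lor\alpha_1^{x_1}\lor\dots\lor\alpha_n^{x_n}$ by splitting off the $u_i$'s and using downward closure of $\phi$... -- but $\phi$ need not be downward closed, so instead one splits $S'$ as $S \cup \{u_1\} \cup \dots \cup \{u_n\}$, assigns $S$ to the $\phi$-disjunct and each singleton to the appropriate $\alpha_i^{x_i}$ disjunct, and uses union closure to absorb. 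One then checks $M,S'\models x_i\subseteq\alpha_i$ via Proposition~\ref{prop:inclfact} using the witness $u_i$, and concludes $M,T\models \Diamond((\phi\lor\alpha_1^{x_1}\vee\dots\vee\alpha_n^{x_n})\land x_1\subseteq\alpha_1\land\dots\land x_n\subseteq\alpha_n)$. I expect the bookkeeping here -- ensuring $S'$ remains a genuine successor team and that all the inclusion atoms are simultaneously witnessed -- to be the only genuinely delicate part; everything else is a mechanical check against the semantic clauses. I would present the proof by going rule-by-rule through the three tables, giving full detail only for $\subseteq$Distr, $\subseteq_\Diamond$Distr, $\Box\Diamond_\subseteq$Exc, and $\Box\lor_\subseteq$E, and leaving the remaining (standard or already-motivated) cases to the reader.
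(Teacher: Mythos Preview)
Your proposal is correct and matches the paper's approach: a rule-by-rule induction, deferring Tables~\ref{table:classical_connectives} and~\ref{table:inclusion_rules} to routine checks (the paper cites \cite{yang2022}) and giving detail only for the novel modal-inclusion rules, in particular $\subseteq_\Diamond$Distr and $\Box\lor_\subseteq$E, via Proposition~\ref{prop:inclfact}. One small correction to your sketch of $\subseteq_\Diamond$Distr: in the $x_i=\bot$ case, Proposition~\ref{prop:inclfact} yields a $w_i\in T$ with $w_i\models\neg\Diamond\alpha_i$ (i.e., \emph{all} successors of $w_i$ satisfy $\neg\alpha_i$), and the existence of a successor $v_i$ comes from $TRS$ (since $w_i\in T\subseteq R^{-1}[S]$); this is exactly the ``bookkeeping'' you flagged, and the paper handles it the same way.
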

\begin{proof}
Soundness proofs for most rules in Tables \ref{table:classical_connectives} and \ref{table:inclusion_rules} can be found in \cite{yang2022}. 
The soundness of $\Diamond_\subseteq$Distr, $\Box\Diamond_\subseteq$Exc and $\Diamond\Box_\subseteq$Exc (Table \ref{table:modal_inclusion}) is easy to show using Proposition \ref{prop:inclfact}. We only give detailed proofs for $\subseteq_\Diamond$Distr and $\Box\lor_\subseteq$E (Table \ref{table:modal_inclusion}).
\begin{enumerate}[align=left]
    
    \item[($\subseteq_\Diamond$Distr)] It suffices to show that $\Diamond\phi\land\bigwedge_{1\leq i\leq n}(x_i\subseteq\Diamond\alpha_i)\models\Diamond((\phi\lor\bigvee_{1\leq i\leq n}\alpha_i^{x_i})\land\bigwedge_{1\leq i\leq n}(x_i\subseteq\alpha_i))$. Suppose that $T\models\Diamond\phi\land\bigwedge_{i\in I}(x_i\subseteq\Diamond\alpha_i)$. If $T=\emptyset$, it satisfies the conclusion by the empty team property so we may suppose $T\neq \emptyset$. Then there is some $S$ such that $TRS$ and $S\models\phi$, and for each $1\leq i \leq n$, $T\models x_i\subseteq\Diamond\alpha_i$. If $x_i=\top$, then there is some $v_i\in R[T]$ such that $v_i\models\alpha_{i}$, i.e., $v_i\models\alpha_{i}^{x_i}$. If, on the other hand, $x_i=\bot$, then by Proposition \ref{prop:inclfact} there is some $w_i\in T$ such that $w_i\models\neg\Diamond\alpha_i$, i.e., $w_i\models\Box\neg\alpha_i$. By $TRS$ there is a world $v_i\in R[T]$ such that $w_i R v_i$, and since $w_i\models\Box\neg\alpha_i$, we have that $v_i\models\neg\alpha_i$, i.e., $v_i\models\alpha_{i}^{x_i}$. Let $S^\prime:=S\cup\{v_i\}_{1\leq i\leq n}.$ Clearly $S^\prime\models(\phi\lor\bigvee_{1\leq i\leq n}\alpha_i^{x_i})\land\bigwedge_{1\leq i\leq n}(x_i\subseteq\alpha_i)$. Since $S\subseteq S^\prime\subseteq R[T]$ and $TRS$, we have that $TRS^\prime$, so $T\models\Diamond((\phi\lor\bigvee_{1\leq i\leq n}\alpha_i^{x_i})\land\bigwedge_{1\leq i\leq n}(x_i\subseteq\alpha_i))$.
    
    \item[($\Box\lor_\subseteq$E)] Suppose that $\Gamma, \top\subseteq\Diamond\mathsf{a}^\mathsf{x}\models\chi$, and $\Gamma,\Box\psi\models\chi$. Let $T\models\Box(\mathsf{x}\subseteq\mathsf{a}\lor\psi)$ and $T\models\gamma$ for all $\gamma\in\Gamma$.  We show that $T\models\chi$.
    
    We have that $R[T]\models\mathsf{x}\subseteq\mathsf{a}\lor\psi$, so $R[T]=T_1\cup T_2$ where $T_1\models \mathsf{x}\subseteq\mathsf{a}$ and $T_2\models\psi$. If $T_1=\emptyset$, then $T_2=R[T]$, so $R[T]\models\psi$, whence $T\models\Box\psi$; therefore also $T\models\chi$.
Else if $T_1\neq \emptyset$, then by $T_1\models \mathsf{x}\subseteq\mathsf{a}$ and Proposition \ref{prop:inclfact} it follows that there is some $v\in T_1$ such that $v\models\mathsf{a}^\mathsf{x}$. By $v\in R[T]$, there is a $w\in T$ with $wRv$, whence $w\models\Diamond\mathsf{a}^\mathsf{x}$. Therefore  $T\models \top\subseteq\Diamond\mathsf{a}^\mathsf{x}$, whence also $T\models\chi$.\qedhere
\end{enumerate}
\end{proof}

It is easy to verify that the rules in Table \ref{table:classical_connectives} constitute a system that is equivalent to other axiomatizations of $\mathcal{K}$,\footnote{For a proof that a similar system is equivalent to a Hilbert-style system for $\mathcal{K}$, see \cite{yang2017}. Note that some of the rules in Table \ref{table:classical_connectives} such as $\Diamond\lor$Distr are derivable for the classical fragment of our axiomatization and are therefore not necessary for this equivalence.} and so, given soundness, Proposition \ref{prop:semantics-state-world}, and the fact that $\mathcal{K}$ is complete for the class of all Kripke models, we have:
\begin{proposition}[Classical completeness] \label{prop:classical_completeness}
Let $\Gamma\cup\{\alpha\}$ be a set of $\ML$-formulas. Then
 $$\Gamma\models\alpha\iff\Gamma\vdash_{\ML(\subseteq)}\alpha.$$
\end{proposition}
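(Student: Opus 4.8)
The plan is to obtain the proposition by gluing together three facts that are already on the table: soundness (Theorem~\ref{soundness}), conservativity of $\ML(\subseteq)$ over $\ML$ (Proposition~\ref{prop:semantics-state-world}), and the completeness of the smallest normal modal logic $\mathcal{K}$ with respect to the class of all Kripke models. The right-to-left direction is immediate: if $\Gamma\vdash_{\ML(\subseteq)}\alpha$ then $\Gamma\models\alpha$ is a special case of Theorem~\ref{soundness}, and nothing about classicality of the formulas is even needed here.

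For the left-to-right direction, I would argue as follows. Suppose $\Gamma\models\alpha$ with every formula in $\Gamma\cup\{\alpha\}$ an $\ML$-formula. By Proposition~\ref{prop:semantics-state-world}, $\Gamma\models^c\alpha$, i.e., $\alpha$ is a local semantic consequence of $\Gamma$ in ordinary single-world modal logic. By the strong completeness of $\mathcal{K}$ for the class of all Kripke models, $\alpha$ is then derivable from $\Gamma$ in any standard calculus for $\mathcal{K}$; since derivations are finite, this also covers the case of infinite $\Gamma$ (equivalently, one may first invoke compactness of $\mathcal{K}$ to pass to a finite subset of $\Gamma$). It then remains to note that the rules of Table~\ref{table:classical_connectives}, read as a calculus acting only on $\ML$-formulas, derive everything that a Hilbert-style axiomatization of $\mathcal{K}$ does: the classical propositional rules ($\neg$I, RAA, $\neg$E, $\lor$I/E, $\land$I/E) yield all classical tautologies; $\Box$Mon, together with its side condition that the premise subderivation carry no undischarged assumptions, simultaneously plays the role of necessitation applied to theorems and yields the $\mathsf{K}$-axiom $\Box(\neg\alpha\lor\beta)\to(\Box\alpha\to\Box\beta)$; and the remaining modal rules ($\Diamond\Box$Inter, $\Box\Diamond$Inter, $\Diamond\lor$Distr, $\Diamond$Mon) are then derivable. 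Transferring the $\mathcal{K}$-derivation of $\alpha$ from $\Gamma$ into our system yields $\Gamma\vdash_{\ML(\subseteq)}\alpha$, as desired.

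The only step requiring genuine care — the one signalled as ``easy to verify'' in the paragraph preceding the statement — is the claim that Table~\ref{table:classical_connectives} axiomatizes exactly $\mathcal{K}$, and here the side conditions are what make things work: restricting $\Box$Mon to assumption-free subderivations is precisely what blocks the unsound inference ``from $\phi$ infer $\Box\phi$'' and pins the system to the \emph{local} (rather than global) consequence relation of $\mathcal{K}$, while the requirement that the discharged-assumption rules act only on $\ML$-formulas is what keeps them sound over teams. A detailed proof of interderivability with a Hilbert system for $\mathcal{K}$ runs exactly as in \cite{yang2017}, so I would only sketch it; this routine but slightly fiddly bookkeeping is the expected (and essentially only) obstacle, and once it is in place the proposition follows by chaining the three ingredients above.
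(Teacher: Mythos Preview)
Your proposal is correct and follows essentially the same approach as the paper: the paper likewise obtains the result by combining soundness (Theorem~\ref{soundness}), conservativity (Proposition~\ref{prop:semantics-state-world}), completeness of $\mathcal{K}$ for all Kripke models, and the observation (with a pointer to \cite{yang2017}) that the rules in Table~\ref{table:classical_connectives} constitute a system equivalent to standard axiomatizations of $\mathcal{K}$. Your additional remarks about the side conditions are accurate and add useful detail, but the overall strategy is the same.
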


In the following proposition 
we derive some useful properties of inclusion atoms. These properties allow us to manipulate the atoms with ease, and we occasionally make use of them without explicit reference to this proposition. All items in the proposition were included as rules in the propositional system in \cite{yang2022}; the proposition shows that the simpler propositional system yielded by the propositional fragment of the rules in Tables \ref{table:classical_connectives} and \ref{table:inclusion_rules} suffices.\footnote{The system in \cite{yang2022} did not feature our $\subseteq_\lnot$E as a rule; it was instead shown to be derivable in that system using a rule corresponding to our Proposition \ref{prop:derivability_results} \ref{prop:derivabilility_results_monotonicity}. An alternative simplified propositional system would then replace $\subseteq_\lnot$E with Proposition \ref{prop:derivability_results} \ref{prop:derivabilility_results_monotonicity}. The rule corresponding to Proposition \ref{prop:derivability_results} \ref{prop:derivabilility_results_monotonicity} was originally introduced in the first-order system in \cite{hannula}.  
}

\begin{proposition}\label{prop:derivability_results}\
\begin{enumerate}[label=(\roman*)]

 \item \label{prop:derivability_results_transitivity}
 $\mathsf{a}\subseteq\mathsf{b},\mathsf{b}\subseteq\mathsf{c}\vdash\mathsf{a}\subseteq\mathsf{c}.$
 
 \item \label{prop:derivability_results_commutativity} $\mathsf{a_0a_1a_2}\subseteq \mathsf{b_0b_1b_2}\vdash \mathsf{a_1a_0a_2}\subseteq \mathsf{b_1b_0b_2}$.
 
 \item \label{prop:derivability_results_weakening} $\mathsf{a_0a_1}\subseteq \mathsf{b_0b_1}\vdash \mathsf{a_0a_0a_1}\subseteq \mathsf{b_0b_0b_1}$.

 \item \label{prop:derivability_results_contraction} $\mathsf{ab}\subseteq \mathsf{cd}\vdash \mathsf{a}\subseteq\mathsf{c}$.

 \item \label{prop:derivabilility_results_monotonicity} 
 $\mathsf{a}\subseteq\mathsf{b}, \alpha(\mathsf{b})\vdash\alpha(\mathsf{a})$, where $\alpha$, or $\alpha(\mathsf{c})$, is classical and propositional, the propositional symbols and constants occurring in $\alpha(\mathsf{c})$ are among those in $\mathsf{c}$, and $\alpha(\mathsf{a})$ and $\alpha(\mathsf{b})$ denote the result of replacing each element of $\mathsf{c}$ in $\alpha$ with the corresponding element in $\mathsf{a}$ and $\mathsf{b}$, respectively.
\end{enumerate}
\end{proposition}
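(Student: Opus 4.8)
The plan is to derive each item using the inclusion rules of Table \ref{table:inclusion_rules} together with the classical completeness result (Proposition \ref{prop:classical_completeness}), following the pattern that arbitrary inclusion atoms can always be traded (via $\subseteq$Ext and $\subseteq$Rdt) for their primitive components, and primitive inclusion atoms $\mathsf{x}\subseteq\mathsf{a}$ behave almost like classical formulas asserting $\mathsf{a}^\mathsf{x}$ holds somewhere. For item \ref{prop:derivabilility_results_monotonicity}, since $\alpha$ is propositional and classical, one first derives semantically that $\alpha(\mathsf{b})$ together with $\mathsf{a}\subseteq\mathsf{b}$ entails $\alpha(\mathsf{a})$; but to get a \emph{proof}, I would instead reduce $\mathsf{a}\subseteq\mathsf{b}$ via $\subseteq$Rdt to the conjunction $\bigwedge_\mathsf{x}(\neg\mathsf{a}^\mathsf{x}\lor\mathsf{x}\subseteq\mathsf{b})$, case-split using $\lor_\subseteq$E on each disjunct, and in the branch $\mathsf{x}\subseteq\mathsf{b}$ combine with $\alpha(\mathsf{b})$: if the classical formula $\mathsf{b}^\mathsf{x}\land\alpha(\mathsf{b})$ is inconsistent then $\subseteq_\neg$E (after reducing $\alpha(\mathsf{b})$ to a disjunction of $\neg\mathsf{b}^\mathsf{x}$'s by classical reasoning) closes the branch, and otherwise $\mathsf{b}^\mathsf{x}\vdash\alpha(\mathsf{b})$ classically forces $\mathsf{a}^\mathsf{x}\vdash\alpha(\mathsf{a})$ classically, while $\neg\mathsf{a}^\mathsf{x}$ in the other branch contradicts... — so the cleanest route is: reduce to primitive atoms, and on the surviving branches use Proposition \ref{prop:classical_completeness} to move the classical implication $\alpha(\mathsf{b})\to\alpha(\mathsf{a})$ "through" the witness. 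I expect item \ref{prop:derivabilility_results_monotonicity} to be the main obstacle, precisely because coordinating the $\subseteq$Rdt case-split with classical completeness requires care; once it is in hand, it can be used freely as a lemma for the rest.

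For the structural items \ref{prop:derivability_results_commutativity}, \ref{prop:derivability_results_weakening}, \ref{prop:derivability_results_contraction}, the plan is to first handle the \emph{primitive} case, where $\mathsf{a},\mathsf{b}$ etc.\ are sequences of constants $\mathsf{x},\mathsf{y}$, using Proposition \ref{prop:inclfact}-style reasoning made proof-theoretic: e.g.\ for contraction \ref{prop:derivability_results_contraction}, from $\mathsf{xy}\subseteq\mathsf{cd}$ one wants $\mathsf{x}\subseteq\mathsf{c}$, which follows because the witness $v\models\mathsf{c}^\mathsf{x}\land\mathsf{d}^\mathsf{y}$ in particular satisfies $\mathsf{c}^\mathsf{x}$; formally, use $\subseteq$Rdt to get $\neg(\mathsf{cd})^{\mathsf{xy}}\lor \mathsf{xy}\subseteq\mathsf{cd}$ and manipulate, or more directly derive it from a combination of $\subseteq$Id, $\subseteq$Exp and the distribution rules. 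Then the general case of each follows by the reduction Lemma \ref{lemma:inclusion_atom_reduction}\ref{lemma:inclusion_atom_reduction_i} / rule $\subseteq$Ext: write $\mathsf{a}\subseteq\mathsf{b}$ as $\bigwedge_\mathsf{x}(\neg\mathsf{a}^\mathsf{x}\lor\mathsf{x}\subseteq\mathsf{b})$ via $\subseteq$Rdt, perform the structural rearrangement at the level of the primitive atoms $\mathsf{x}\subseteq\mathsf{b}$ and the classical formulas $\mathsf{a}^\mathsf{x}$ (using classical completeness to relate, say, $\mathsf{a_0a_1a_2}^{\mathsf{x}}$ with $\mathsf{a_1a_0a_2}^{\mathsf{x}'}$ under the permutation of coordinates), and reassemble with $\subseteq$Ext. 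Commutativity \ref{prop:derivability_results_commutativity} is the purely cosmetic coordinate-permutation version of this; weakening \ref{prop:derivability_results_weakening} duplicates a coordinate, handled by $\subseteq$Exp or by noting $\mathsf{a_0}^\mathsf{x_0}\leftrightarrow\mathsf{a_0}^\mathsf{x_0}\land\mathsf{a_0}^\mathsf{x_0}$ classically.

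For transitivity \ref{prop:derivability_results_transitivity}, the plan is again to reduce to the primitive case and then use item \ref{prop:derivabilility_results_monotonicity}. Concretely: from $\mathsf{a}\subseteq\mathsf{b}$ apply $\subseteq$Rdt to obtain, for each $\mathsf{x}$, $\neg\mathsf{a}^\mathsf{x}\lor\mathsf{x}\subseteq\mathsf{b}$; from $\mathsf{b}\subseteq\mathsf{c}$ apply $\subseteq$Rdt similarly; in the branch where $\mathsf{x}\subseteq\mathsf{b}$ holds, I want $\mathsf{x}\subseteq\mathsf{c}$, and this is exactly transitivity for a primitive left-hand side, which should be derivable by combining $\mathsf{x}\subseteq\mathsf{b}$ with the reduced form of $\mathsf{b}\subseteq\mathsf{c}$ via a case-split ($\lor_\subseteq$E) on $\neg\mathsf{b}^\mathsf{x}\lor\mathsf{x}\subseteq\mathsf{c}$, using $\subseteq_\neg$E to kill the $\neg\mathsf{b}^\mathsf{x}$ branch against $\mathsf{x}\subseteq\mathsf{b}$. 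Then $\subseteq$Ext reassembles $\mathsf{a}\subseteq\mathsf{c}$ from the branches $\neg\mathsf{a}^\mathsf{x}\lor\mathsf{x}\subseteq\mathsf{c}$. Throughout, I would lean on Proposition \ref{prop:classical_completeness} for all the purely classical steps (e.g.\ that $\top$ is provably $\bigvee_\mathsf{x}\mathsf{a}^\mathsf{x}$, enabling the initial case-split into all truth-value patterns $\mathsf{x}$), and I would state the primitive-case transitivity as an auxiliary claim proved once and reused. The one genuine subtlety to watch is ensuring the empty-team case is not silently mishandled — but since $\subseteq_\neg$E and the distribution rules are already sound for the empty team, no separate argument is needed.
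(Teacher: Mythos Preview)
Your proposal is correct and follows essentially the same route as the paper: reduce to primitive atoms via $\subseteq$Rdt/$\subseteq$Ext, case-split with $\lor_\subseteq$E, kill contradictory branches with $\subseteq_\neg$E, and invoke classical completeness for all the classical manipulations. The one concrete step you leave unspecified---how to actually obtain the rearranged or projected primitive atom in items \ref{prop:derivability_results_commutativity}--\ref{prop:derivability_results_contraction}---is supplied in the paper by applying $\subseteq$Id to the \emph{target} right-hand side (e.g.\ $\mathsf{b_1b_0b_2}\subseteq\mathsf{b_1b_0b_2}$ for \ref{prop:derivability_results_commutativity}, $\mathsf{c}\subseteq\mathsf{c}$ for \ref{prop:derivability_results_contraction}) followed by $\subseteq$Rdt to manufacture the needed disjunction; also, the paper proves \ref{prop:derivabilility_results_monotonicity} last and does not use it as a lemma for the others.
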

\begin{proof}
\begin{enumerate}[label=(\roman*)] 

    \item By $\subseteq$Ext, it suffices to derive $\neg\mathsf{a}^\mathsf{x}\lor\mathsf{x}\subseteq\mathsf{c}$ for all $\mathsf{x}\in\{\top,\bot\}^{|\mathsf{a}|}$. For a given $\mathsf{x}$, we have $\mathsf{a}\subseteq\mathsf{b}\vdash\neg\mathsf{a}^\mathsf{x}\lor\mathsf{x}\subseteq\mathsf{b}$ and $\mathsf{b}\subseteq\mathsf{c}\vdash\neg\mathsf{b}^\mathsf{x}\lor\mathsf{x}\subseteq\mathsf{c}$ by $\subseteq$Rdt. By $\vee_{\subseteq}$E it now suffices to show:
     \begin{enumerate}[label=(\alph*)]
         \item $\neg\mathsf{a}^\mathsf{x}\vdash\neg\mathsf{a}^\mathsf{x}\lor\mathsf{x}\subseteq\mathsf{c}$.
         \item $\mathsf{x}\subseteq\mathsf{b},\neg\mathsf{b}^\mathsf{x}\vdash\neg\mathsf{a}^\mathsf{x}\lor\mathsf{x}\subseteq\mathsf{c}$.
         \item $\mathsf{x}\subseteq\mathsf{c}\vdash\neg\mathsf{a}^\mathsf{x}\lor\mathsf{x}\subseteq\mathsf{c}$.
     \end{enumerate}
     We have that (a) and (c) follow by $\vee$I, and (b) by $\subseteq_\lnot$E.

\item By $\subseteq$Ext, it suffices to derive $\neg\mathsf{a_1a_0a_2}^\mathsf{x_1x_0x_2}\lor\mathsf{x_1x_0x_2}\subseteq\mathsf{b_1b_0b_2}$ for all $\mathsf{x_1x_0x_2}\in\{\top,\bot\}^{|\mathsf{a_1a_0a_2}|}$. For a given $\mathsf{x_1x_0x_2}$, we have $\mathsf{a_0a_1a_2}\subseteq\mathsf{b_0b_1b_2}\vdash\neg\mathsf{a_0a_1a_2}^\mathsf{x_0x_1x_2}\lor\mathsf{x_0x_1x_2}\subseteq\mathsf{b_0b_1b_2}$ by $\subseteq$Rdt. By Proposition \ref{prop:classical_completeness} we have $\lnot\mathsf{a_0a_1a_2}^{\mathsf{x_0x_1x_2}}\dashv\vdash \lnot \mathsf{a_1a_0a_2}^{\mathsf{x_1x_0x_2}}$, and therefore also $\neg\mathsf{a_0a_1a_2}^\mathsf{x_0x_1x_2}\lor\mathsf{x_0x_1x_2}\subseteq\mathsf{b_0b_1b_2}\vdash\neg\mathsf{a_1a_0a_2}^\mathsf{x_1x_0x_2}\lor\mathsf{x_0x_1x_2}\subseteq\mathsf{b_0b_1b_2}$ $(*)$. We also have $\mathsf{b_1b_0b_2}\subseteq \mathsf{b_1b_0b_2}$ by $\subseteq$Id, and then $\lnot \mathsf{b_1b_0b_2}^{\mathsf{x_1x_0x_2}}\lor \mathsf{x_1x_0x_2}\subseteq\mathsf{b_1b_0b_2}$ by $\subseteq$Rdt. By Proposition \ref{prop:classical_completeness}, we have $\lnot \mathsf{b_0b_1b_2}^{\mathsf{x_0x_1x_2}} \dashv\vdash \lnot \mathsf{b_1b_0b_2}^{\mathsf{x_1x_0x_2}} $ so that $\lnot \mathsf{b_1b_0b_2}^{\mathsf{x_1x_0x_2}}\lor \mathsf{x_1x_0x_2}\subseteq\mathsf{b_1b_0b_2}\vdash\lnot \mathsf{b_0b_1b_2}^{\mathsf{x_0x_1x_2}}\lor \mathsf{x_1x_0x_2}\subseteq\mathsf{b_1b_0b_2}$ $(**)$. By $\lor_\subseteq$E applied to $(*)$ and $(**)$ it now suffices to show:
\begin{enumerate}
    \item $\neg\mathsf{a_1a_0a_2}^\mathsf{x_1x_0x_2}\vdash \neg\mathsf{a_1a_0a_2}^\mathsf{x_1x_0x_2}\lor\mathsf{x_1x_0x_2}\subseteq\mathsf{b_1b_0b_2}$.
    \item $\mathsf{x_0x_1x_2}\subseteq\mathsf{b_0b_1b_2}, \lnot \mathsf{b_0b_1b_2}^{\mathsf{x_0x_1x_2}}\vdash \neg\mathsf{a_1a_0a_2}^\mathsf{x_1x_0x_2}\lor\mathsf{x_1x_0x_2}\subseteq\mathsf{b_1b_0b_2}$.
    \item $ \mathsf{x_1x_0x_2}\subseteq\mathsf{b_1b_0b_2} \vdash \neg\mathsf{a_1a_0a_2}^\mathsf{x_1x_0x_2}\lor\mathsf{x_1x_0x_2}\subseteq\mathsf{b_1b_0b_2}$.
\end{enumerate}
We have that (a) and (c) follow by $\vee$I, and (b) by $\subseteq_{\lnot}$E.

     \item Similar to \ref{prop:derivability_results_commutativity}.

        \item By $\subseteq$Ext, it suffices to derive $\neg\mathsf{a}^\mathsf{x}\lor\mathsf{x}\subseteq\mathsf{c}$ for all $\mathsf{x}\in\{\top,\bot\}^{|\mathsf{a}|}$. For a given $\mathsf{x}$, we have $\bigwedge_{\mathsf{y}\in \{\top,\bot\}^{|\mathsf{b}|}}(\lnot \mathsf{ab}^{\mathsf{xy}}\lor \mathsf{xy}\subseteq \mathsf{cd})$ by $\subseteq$Rdt. By repeatedly applying $\vee_{\subseteq}$E, it suffices to show:
        \begin{enumerate}
        \item $\mathsf{xy}\subseteq\mathsf{cd}\vdash \neg\mathsf{a}^\mathsf{x}\lor\mathsf{x}\subseteq\mathsf{c}$ for any given $\mathsf{y}\in \{\top,\bot\}^{|\mathsf{b}|}$.
        \item $\bigwedge_{\mathsf{y}\in \{\top,\bot\}^{|\mathsf{b}|}}\lnot \mathsf{ab}^{\mathsf{xy}}\vdash  \neg\mathsf{a}^\mathsf{x}\lor\mathsf{x}\subseteq\mathsf{c}$.
        \end{enumerate}
        For (b), by Proposition \ref{prop:classical_completeness} we have $\vdash \bigvee_{\mathsf{y}\in \{\top,\bot\}^{|\mathsf{b}|}}\mathsf{b}^{\mathsf{y}}$, and by Proposition \ref{prop:classical_completeness} and $\vee$I, $\bigwedge_{\mathsf{y}\in \{\top,\bot\}^{|\mathsf{b}|}}\lnot \mathsf{ab}^{\mathsf{xy}}, \bigvee_{\mathsf{y}\in \{\top,\bot\}^{|\mathsf{b}|}}\mathsf{b}^{\mathsf{y}}\vdash  \neg\mathsf{a}^\mathsf{x}\lor\mathsf{x}\subseteq\mathsf{c}$ (for a given $\mathsf{y}$, $\lnot\mathsf{ab}^{\mathsf{xy}}=\lnot (\mathsf{a}^\mathsf{x}\land \mathsf{b}^\mathsf{y})\dashv \vdash \lnot \mathsf{a}^\mathsf{x}\lor\lnot \mathsf{b}^\mathsf{y}$, and $\lnot \mathsf{a}^\mathsf{x}\lor\lnot \mathsf{b}^\mathsf{y},\mathsf{b}^\mathsf{y}\vdash \lnot \mathsf{a}^\mathsf{x}$).

        For (a), we have $\mathsf{c}\subseteq \mathsf{c}$ by $\subseteq$Id, and then $\lnot \mathsf{c}^{\mathsf{x}}\lor \mathsf{x}\subseteq\mathsf{c}$ by $\subseteq$Rdt. By Proposition \ref{prop:classical_completeness}, we have $\lnot \mathsf{c}^{\mathsf{x}}\lor \mathsf{x}\subseteq\mathsf{c}\vdash \bigwedge_{\mathsf{z}\in \{\top,\bot\}^{|\mathsf{b}|}}\lnot \mathsf{cd}^{\mathsf{xz}}\lor \mathsf{x}\subseteq\mathsf{c}$ (for a given $\mathsf{z}$, we have $\lnot \mathsf{c}^\mathsf{x} \vdash \lnot \mathsf{c}^\mathsf{x}\lor \lnot \mathsf{d}^\mathsf{z}\dashv\vdash \lnot (\mathsf{c}^\mathsf{x}\land\mathsf{d}^\mathsf{z})=\lnot \mathsf{cd}^{\mathsf{xz}}$). By $\vee_\subseteq$E it therefore suffices to show:
        \begin{enumerate}\addtocounter{enumii}{2}
        \item $\mathsf{x}\subseteq\mathsf{c} \vdash \neg\mathsf{a}^\mathsf{x}\lor \mathsf{x}\subseteq\mathsf{c}$. 
        \item $\mathsf{xy}\subseteq \mathsf{cd}, \bigwedge_{\mathsf{z}\in \{\top,\bot\}^{|\mathsf{b}|}}\lnot \mathsf{cd}^{\mathsf{xz}} \vdash \neg\mathsf{a}^\mathsf{x}\lor\mathsf{x}\subseteq\mathsf{c}$ for any given $\mathsf{y}\in \{\top,\bot\}^{|\mathsf{b}|}$.
        \end{enumerate}
        We have that (c) follows by $\vee$I, and (d) by $\subseteq_\lnot $E.
        
    \item Let $n:=|\mathsf{a}|=|\mathsf{b}|=|\mathsf{c}|$. Define a function $f:\{\bot,\top\}^n\to \{\bot,\top\}$ by $f(\mathsf{y})=\top :\iff w\models \alpha(\mathsf{\mathsf{y}})$ for some $w$ (note that since the propositional symbols and constants appearing in $\alpha(\mathsf{c})$ are among those in $\mathsf{c}$, the definition of $f$ is independent of the choice of $w$). From $\mathsf{a}\subseteq \mathsf{b}$ we have $\bigwedge_{\{\mathsf{x}\in \{\top,\bot\}^n\mid f(\mathsf{x})=\bot\}}(\lnot \mathsf{a}^\mathsf{x}\lor \mathsf{x}\subseteq\mathsf{b})$ by $\subseteq$Rdt. By repeatedly applying $\vee_{\subseteq}$E, it suffices to show:
\begin{enumerate}[label=(\alph*)]
    \item $\bigwedge_{\{\mathsf{x}\in \{\top,\bot\}^n\mid f(\mathsf{x})=\bot\}}\lnot \mathsf{a}^\mathsf{x}\vdash \alpha(\mathsf{a})$.
    \item $\alpha(\mathsf{b}),\mathsf{x}\subseteq\mathsf{b}\vdash \alpha(\mathsf{a})$ for any given $\mathsf{x}\in \{\top,\bot\}^n$ with $f(\mathsf{x})=\bot$.
\end{enumerate}
To show (a) and (b), we show (c):
\begin{enumerate}[label=(\alph*)]\addtocounter{enumii}{2}
    \item $\bigwedge_{\{\mathsf{x}\in \{\top,\bot\}^n\mid f(\mathsf{x})=\bot\}}\lnot \mathsf{c}^\mathsf{x}\equiv \alpha(\mathsf{c})$.
\end{enumerate}
For the left-to-right direction, let $w\models\bigwedge_{\{\mathsf{x}\in \{\top,\bot\}^n \mid f(\mathsf{x})=\bot\}}\lnot \mathsf{c}^\mathsf{x}$ and assume for contradiction that $w\not\models \alpha (\mathsf{c})$. Then for $\mathsf{y}\in \{\bot,\top\}^n$ given by $y_i=\top \iff w\models c_i$ we have $w\not\models\alpha(\mathsf{y})$ so $f(\mathsf{y})=\bot$, whence $w\models \lnot \mathsf{c}^\mathsf{y}$. But clearly by the definition of $\mathsf{y}$ we have $w\models \mathsf{c}^\mathsf{y}$, a contradiction.

For the right-to-left direction, let $w\models \alpha(\mathsf{c})$ and let $\mathsf{x}\in \{\top,\bot\}^n$ be such that $f(\mathsf{x})=\bot$. Then for some $x_i$ we must have $w\models c_i \iff x_i=\bot$ (lest we have $w\models \alpha(\mathsf{x})$ whence $f(\mathsf{x})=\top$, contradicting $f(\mathsf{x})=\bot$), and therefore $w\models \lnot \mathsf{c}^\mathsf{x}$.

Given (c), (a) follows immediately by Proposition \ref{prop:classical_completeness}.

For (b), by (c) and Proposition \ref{prop:classical_completeness}, $\alpha(\mathsf{b})$ gives us $\lnot \mathsf{b}^\mathsf{x}$. By $\subseteq_{\lnot}$E applied to $\lnot \mathsf{b}^\mathsf{x}$ and $\mathsf{x}\subseteq\mathsf{b}$, we have $\alpha(\mathsf{a})$. \qedhere 

\end{enumerate}
\end{proof}

We move on to the completeness proof. Our strategy 
involves showing that each $\ML(\subseteq)$-formula is provably equivalent to a formula in the normal form presented in Section \ref{section:expressive_power}. Once all formulas are in normal form, completeness follows from the semantic and proof-theoretic properties of formulas in this form. That is, we show: 

\begin{lemma}[Provable equivalence of the normal form]\label{nfproveeq}
For any formula $\phi(\mathsf{X})$ in $\ML(\subseteq)$ and $k\geq md(\phi)$, there is a (finite, nonempty) property $\mathcal{C}$ (over $\mathsf{X}$) such that
$$\phi\dashv \vdash\bigvee_{(M,T)\in\mathcal{C}}\theta^{\mathsf{X},k}_{M,T},\quad\quad\text{i.e.,}\quad\quad\phi \dashv \vdash\bigvee_{(M,T)\in\mathcal{C}}(\bigvee_{w\in T}\chi^{\mathsf{X},k}_{M,w}\land\bigwedge_{w\in T}(\top\subseteq\chi^{\mathsf{X},k}_{M,w})).$$
\end{lemma}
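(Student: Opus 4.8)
The plan is to prove the lemma by induction on the structure of $\phi$, establishing the statement for every $k\geq md(\phi)$ at once. The base cases are $\phi$ a propositional symbol, $\phi=\bot$, $\phi=\neg\alpha$ for classical $\alpha$, and $\phi=\mathsf a\subseteq\mathsf b$; the inductive cases are $\phi=\psi_1\lor\psi_2$, $\psi_1\land\psi_2$, $\Diamond\psi$ and $\Box\psi$. I would organise the argument around six auxiliary facts, each of the shape ``a certain class of formulas is provably equivalent to a formula in the normal form (NF) of Section~\ref{section:expressive_power}'': (a)~every classical $\alpha$; (b)~every unary top inclusion atom $\top\subseteq\chi^{k}_{M,w}$; (c)~the class of normal forms is closed under $\lor$ up to $\dashv\vdash$; (d)~it is closed under $\land$ up to $\dashv\vdash$; (e)~if $\psi$ is provably equivalent to a normal form at level $k-1$ then so is $\Diamond\psi$ to one at level $k$; (f)~likewise for $\Box$. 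The lemma then follows from (a)--(f) by the induction. Throughout I would use classical completeness (Proposition~\ref{prop:classical_completeness}), Proposition~\ref{prop:inclfact}, Proposition~\ref{prop:derivability_results}, Lemma~\ref{lemma:inclusion_atom_reduction} (whose two reductions are exactly the content of the rules $\subseteq$Ext/$\subseteq$Rdt and of the other inclusion rules, hence provable), Corollary~\ref{coro:inclfact} ($\bot\subseteq\beta\dashv\vdash\top\subseteq\neg\beta$, derivable from $\subseteq$Id and $\subseteq$Rdt), the finiteness up to equivalence of the sets of $k$th Hintikka formulas for worlds and for teams, the facts that $\lor$ is provably associative and commutative, that $\bot\vdash\phi$ for every $\phi$ (by $\neg$E applied to $\bot$ and $\neg\bot$), and that $\Diamond$ and $\Box$ preserve $\dashv\vdash$ (by $\Diamond$Mon and $\Box$Mon), together with the fact that classically equivalent classical formulas are provably equivalent and the derivable monotonicity properties of $\top\subseteq(-)$ in its classical argument (e.g.\ $\chi\vdash\top\subseteq\chi$ and, more generally, $\top\subseteq\gamma\dashv\vdash\top\subseteq\gamma'$ whenever $\gamma\dashv\vdash\gamma'$, obtained from the inclusion rules and classical completeness).

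For a classical $\alpha$ (hence for $p$, $\bot$, $\neg\alpha$): by classical completeness and the usual Hintikka normal form for $\mathcal K$, $\alpha\dashv\vdash\bigvee_{w}\chi^{k}_{M,w}$ over a finite set of pointed models realizing exactly the $\mathsf X,k$-types on which $\alpha$ holds; since $\chi^{k}_{M,w}\dashv\vdash\chi^{k}_{M,w}\land(\top\subseteq\chi^{k}_{M,w})=\theta^{k}_{M,\{w\}}$ (for $\dashv$ use $\land$E; the direction $\chi^{k}_{M,w}\vdash\top\subseteq\chi^{k}_{M,w}$ is derivable, e.g.\ via $\subseteq$Exp and Proposition~\ref{prop:derivability_results}), this is provably a normal form, and if $\alpha$ is unsatisfiable one takes instead $\mathcal C=\{(M,\emptyset)\}$, noting $\theta^{k}_{M,\emptyset}\dashv\vdash\bot\dashv\vdash\alpha$. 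For an inclusion atom $\mathsf a\subseteq\mathsf b$: $\subseteq$Ext and $\subseteq$Rdt give $\mathsf a\subseteq\mathsf b\dashv\vdash\bigwedge_{\mathsf x}(\neg\mathsf a^{\mathsf x}\lor\mathsf x\subseteq\mathsf b)$, and iterating the provable equivalence of Lemma~\ref{lemma:inclusion_atom_reduction}\ref{lemma:inclusion_atom_reduction_ii} together with Corollary~\ref{coro:inclfact} rewrites each primitive atom $\mathsf x\subseteq\mathsf b$ as a $\{\land,\lor\}$-combination of classical formulas and unary top inclusion atoms; replacing each $\gamma$ occurring in such an atom by its Hintikka normal form and using $\top\subseteq(\gamma_1\lor\gamma_2)\dashv\vdash(\top\subseteq\gamma_1)\lor(\top\subseteq\gamma_2)$, this becomes a $\{\land,\lor\}$-combination of classical formulas and atoms $\top\subseteq\chi^{k}_{M,w}$, which by (a),(b),(c),(d) is provably a normal form. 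Fact (b) itself: $\top\subseteq\chi^{k}_{M,w}$ is provably equivalent to $\bigvee_{Y}\theta^{k}_{M,T_Y}$, where $Y$ ranges over the finitely many sets of $\mathsf X,k$-types containing the type of $w$ and $T_Y$ realizes exactly $Y$; the direction $\dashv$ is $\land$E followed by $\lor$E, and for $\vdash$ one conjoins the classical tautology $\bigvee_{\text{all types }y}\chi^{k}_{M,y}$ to $\top\subseteq\chi^{k}_{M,w}$ and applies $\subseteq$Distr repeatedly (using $\chi^{k}_{M,y}\lor\chi^{k}_{M,y}\dashv\vdash\chi^{k}_{M,y}$ for the classical merges).

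For (c), taking $\mathcal C$ to consist of the members of the two classes supplied by the induction hypothesis (using isomorphic copies if necessary, so no idempotence of $\lor$ is needed), $\bigvee_{\mathcal C}\theta^{k}_{M,T}$ is, up to bracketing and reordering, $\bigvee_{\mathcal C_1}\theta^{k}_{M,T}\lor\bigvee_{\mathcal C_2}\theta^{k}_{M,T}$, hence provably equivalent to it by associativity and commutativity of $\lor$. For (d) the point is that $\land$ does \emph{not} distribute over $\lor$ for non-downward-closed formulas, so $(\bigvee_i\theta^{k}_{M,T_i})\land(\bigvee_j\theta^{k}_{M,S_j})$ cannot simply be expanded; instead one distributes the classical (downward-closed) conjunct $\eta^{k}_{M,S_j}$ of each $\theta^{k}_{M,S_j}$ freely and pushes the top-inclusion conjuncts through the remaining disjunctions by $\subseteq$Distr, at the cost of extra classical disjuncts, until every disjunct is provably equal either to $\bot$ (when the two team types are incompatible, detected via $\subseteq_\neg$E applied to $\top\subseteq\chi^{k}_{M,w}$ and a classical formula provably entailing $\neg\chi^{k}_{M,w}$) or to a single $\theta^{k}_{M,U}$ (when they agree up to $\mathsf X,k$-bisimulation, using that bisimilar teams have provably equivalent Hintikka formulas); this adapts the propositional argument of \cite{yang2022}. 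For (e), $\Diamond$ provably distributes over $\lor$ (by $\Diamond\lor$Distr and $\Diamond$Mon), so it suffices to treat a single disjunct, and $\Diamond\theta^{k-1}_{M,S}\dashv\vdash\Diamond\eta^{k-1}_{M,S}\land\bigwedge_{v\in S}(\top\subseteq\Diamond\chi^{k-1}_{M,v})$ — the direction $\vdash$ via $\land$E, $\Diamond$Mon and $\Diamond_\subseteq$Distr, and $\dashv$ via $\subseteq_\Diamond$Distr (using $\eta^{k-1}_{M,S}\lor\bigvee_{v\in S}\chi^{k-1}_{M,v}\dashv\vdash\eta^{k-1}_{M,S}$) and $\Diamond$Mon; since $\Diamond\eta^{k-1}_{M,S}$ and each $\Diamond\chi^{k-1}_{M,v}$ are classical of modal depth $\leq k$, putting them in Hintikka normal form at level $k$ and invoking (a),(b),(c),(d) finishes the case. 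Finally (f): since $\Box$ does not distribute over $\lor$ and $\Box\psi$ is evaluated at $R[T]$, which may be empty even when $T$ is not, one must first massage the argument of $\Box$ into a shape to which $\Box\lor_\subseteq$E, $\Diamond\Box_\subseteq$Exc and $\Box\Diamond_\subseteq$Exc apply (alongside $\Box$Mon and $\Diamond\Box$Inter/$\Box\Diamond$Inter), splitting throughout according to whether the relevant successor team is empty — for instance $\Box(\top\subseteq\chi)\dashv\vdash\Box\bot\lor(\top\subseteq\Diamond\chi)$, obtained from $\Box\lor_\subseteq$E and $\Diamond\Box_\subseteq$Exc — and then reassembles using (a),(b),(c),(d); this again follows the pattern of \cite{yang2022}, now using the rules of Table~\ref{table:modal_inclusion}. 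In every case $\mathcal C$ can be taken finite (finitely many non-equivalent Hintikka formulas for teams) and nonempty (use $\{(M,\emptyset)\}$, with $\theta^{k}_{M,\emptyset}\dashv\vdash\bot$, for an unsatisfiable $\phi$).

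I expect the conjunction case (d) and the box case (f) to be where the real effort lies: (d) because the failure of distributivity of $\land$ over $\lor$ forces the detour through $\subseteq$Distr and a case analysis of when two team-Hintikka formulas are jointly satisfiable, and (f) because of the interaction of $\Box$ with the empty-team property — which is precisely what the less standard rules $\Box\lor_\subseteq$E and $\Diamond\Box_\subseteq$Exc / $\Box\Diamond_\subseteq$Exc are designed to control. The remainder is bookkeeping with classical completeness and the basic inclusion rules, largely parallel to the propositional development of \cite{yang2022}.
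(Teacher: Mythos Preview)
Your plan is essentially the paper's: induction on $\phi$, with the classical, disjunction, and diamond cases handled as you describe (the paper's equation~(\ref{miracle_diamond2}) is exactly your (e)), and the inclusion-atom case reduced via $\subseteq$Ext/$\subseteq$Rdt. The main organizational difference is that for the two hard cases (d) and (f) the paper does not push $\subseteq$Distr through inline as you sketch; instead it first proves a generalized elimination lemma (Lemma~\ref{lemma:inclusion_disjunction_elimination_generalization}) extending $\lor_\subseteq$E and $\Box\lor_\subseteq$E to conjunctions of primitive inclusion atoms and to full disjunctions $\bigvee_i(\phi_i\land\iota_i)$, and then writes down the target normal form explicitly using \emph{disjoint unions} of teams --- for conjunction, $\mathcal{Y}=\{\biguplus\mathcal{C}'\mid\mathcal{C}'\subseteq\mathcal{C},\ \biguplus\mathcal{C}'\leftrightarroweq_k\biguplus\mathcal{D}'\text{ for some }\mathcal{D}'\subseteq\mathcal{D}\}$, and for box, $\bigvee_{\mathcal{C}\subseteq\mathcal{D}}(\Box\bigvee_{w\in\biguplus\mathcal{C}}\chi^{n}_{w}\land\bigwedge_{w\in\biguplus\mathcal{C}}(\top\subseteq\Diamond\chi^{n}_{w}))$. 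Your description of (d) (``every disjunct is \ldots\ a single $\theta^{k}_{M,U}$ when they agree up to bisimulation'') understates this: the $U$'s that appear are unions of several $T_i$'s at once, not pairwise matches, and the clean way to see which unions survive is precisely the bisimilarity-of-disjoint-unions criterion together with Lemma~\ref{lemma:disjoint_union_provability}. Your route through unary top atoms for the inclusion case also works but is longer than the paper's direct treatment of primitive atoms.
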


The proof of the above lemma is more involved. We postpone it for now, and first focus on how it allows us to to prove completeness. To that end, note the following immediate consequence of classical completeness: the $k$th Hintikka formulas of two $k$-bisimilar pointed models are provably equivalent.

\begin{lemma}\label{lemma:hintikka_provable_equivalence} If $M,w\leftrightarroweq_k M^\prime,u$, then $\chi_{M,w}^k\dashv\vdash\chi_{M^\prime,u}^k$.
\end{lemma}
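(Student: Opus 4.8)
The statement to prove is Lemma~\ref{lemma:hintikka_provable_equivalence}: if $M,w\leftrightarroweq_k M',u$, then $\chi^k_{M,w}\dashv\vdash\chi^k_{M',u}$.

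\medskip

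\textbf{Proof plan.} The plan is to combine the semantic characterization of bisimilarity with classical completeness. First, recall from Theorem~\ref{thm:world_bisimulation} that $M,w\leftrightarroweq_k M',u$ implies $u\models\chi^k_{M,w}$; but more is true. By the same theorem, $k$-bisimilarity coincides with $k$-equivalence $\equiv_k$, so $M,w$ and $M',u$ satisfy exactly the same $\ML$-formulas of modal depth at most $k$. In particular, since $md(\chi^k_{M,w})\leq k$ and $md(\chi^k_{M',u})\leq k$ (an easy induction on $k$ using the definition of the Hintikka formulas and of modal depth), and since $w\models\chi^k_{M,w}$ and $u\models\chi^k_{M',u}$ trivially, we get $u\models\chi^k_{M,w}$ and $w\models\chi^k_{M',u}$.

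\medskip

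Next I would upgrade these two single-world satisfactions to a semantic entailment between the Hintikka formulas. The key point is the well-known fact (following from Theorem~\ref{thm:world_bisimulation}) that $\chi^k_{M,w}$ characterizes the $k$-bisimulation class of $(M,w)$: for any pointed model $(N,v)$, we have $N,v\models\chi^k_{M,w}$ iff $N,v\leftrightarroweq_k M,w$. Hence, if $N,v\models\chi^k_{M,w}$, then $N,v\leftrightarroweq_k M,w\leftrightarroweq_k M',u$, so by transitivity of $k$-bisimilarity and Theorem~\ref{thm:world_bisimulation} again, $N,v\models\chi^k_{M',u}$. This shows $\chi^k_{M,w}\models^c\chi^k_{M',u}$, and by the symmetric argument $\chi^k_{M',u}\models^c\chi^k_{M,w}$; that is, $\chi^k_{M,w}\equiv\chi^k_{M',u}$ as classical formulas.

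\medskip

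Finally, since both Hintikka formulas are $\ML$-formulas and we have just established their semantic equivalence, Proposition~\ref{prop:classical_completeness} (classical completeness of the deductive system restricted to $\ML$-formulas) immediately yields $\chi^k_{M,w}\vdash_{\ML(\subseteq)}\chi^k_{M',u}$ and $\chi^k_{M',u}\vdash_{\ML(\subseteq)}\chi^k_{M,w}$, i.e.\ $\chi^k_{M,w}\dashv\vdash\chi^k_{M',u}$, as desired.

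\medskip

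\textbf{Main obstacle.} There is essentially no hard step here; the lemma is a quick corollary of the machinery already assembled. The only thing requiring a little care is making explicit that $\chi^k$ has modal depth at most $k$ and that it is a genuine $\ML$-formula (so that classical completeness applies) — both are routine inductions — together with invoking the characterization of Hintikka formulas up to $k$-bisimulation from Theorem~\ref{thm:world_bisimulation}. The "provable equivalence" punchline is then purely a matter of citing Proposition~\ref{prop:classical_completeness}.
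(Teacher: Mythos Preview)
Your proof is correct and follows essentially the same approach as the paper: use Theorem~\ref{thm:world_bisimulation} to pass from $k$-bisimilarity to semantic equivalence of the Hintikka formulas, then invoke classical completeness (Proposition~\ref{prop:classical_completeness}) to conclude provable equivalence. The paper's proof is a terse two-line version of what you wrote; your additional detail (making explicit why $\chi^k_{M,w}\models^c\chi^k_{M',u}$ via the characterization property of Hintikka formulas) merely unpacks a step the paper leaves implicit.
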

\begin{proof}
By Theorem \ref{thm:world_bisimulation},  $w\leftrightarroweq_k u$ implies $w\equiv_k u$ and then $\chi_{w}^k\dashv\vdash\chi_{u}^k$ by Proposition \ref{prop:classical_completeness}.
\end{proof}

 Recall from footnote \ref{footnote:finite_representatives} in Section \ref{section:expressive_power} that in defining the $(k+1)$th-Hintikka formula $\chi^{\mathsf{X},k+1}_{M,w}$, we choose an arbitrary finite set $T$ of representatives from the possibly infinite set $R[w]$. The above lemma shows that two $k$-bisimilar worlds give rise to provably equivalent Hintikka formulas, so the specific choice of the representatives also makes no difference in our proof system. Therefore, we may  continue our practice of using arbitrary finite representatives in the definition of $\chi^{\mathsf{X},k+1}_{M,w}$ in the context of the proof system. Similar results hold for other formulas employing finite representatives, though we henceforth omit explicit discussion of these representatives. Our next aim is to prove the analogous result for Hintikka formulas for teams---that is:

\begin{lemma}\label{lemma:team_hintikka_provable_equivalence}
If $M,T\leftrightarroweq_k M^\prime,S$, then $\theta_{M,T}^k\dashv\vdash\theta_{M^\prime,S}^k$. 
\end{lemma}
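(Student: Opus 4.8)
The plan is to peel $\theta^k_{M,T}$ apart into its three constituent operations and show that each respects provable equivalence: the disjunction $\eta^k_{M,T}=\bigvee_{w\in T}\chi^k_{M,w}$, the conjunction of top inclusion atoms $\zeta^k_{M,T}=\bigwedge_{w\in T}(\top\subseteq\chi^k_{M,w})$, and the outer conjunction $\theta^k_{M,T}=\eta^k_{M,T}\wedge\zeta^k_{M,T}$. By Definition~\ref{teamkbisimdef}, $M,T\leftrightarroweq_k M',S$ says exactly that every $w\in T$ is $\leftrightarroweq_k$-bisimilar to some $w'\in S$ and conversely, and by Lemma~\ref{lemma:hintikka_provable_equivalence} such a pair satisfies $\chi^k_{M,w}\dashv\vdash\chi^k_{M',w'}$. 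So the first observation is that the collections $\{\chi^k_{M,w}\mid w\in T\}$ and $\{\chi^k_{M',w'}\mid w'\in S\}$ coincide up to provable equivalence. (If $T=\emptyset$, then $S=\emptyset$ and both $\theta^k$'s are literally $\bot\wedge\top$; so assume $T,S\neq\emptyset$.)

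For the disjunctive part I would simply invoke classical completeness: $\eta^k_{M,T}$ and $\eta^k_{M',S}$ are $\ML$-formulas which, by Lemma~\ref{lemma:team_bisimulation}\ref{lemma:team_bisimulation_ii} together with $T\leftrightarroweq_k S$, define the same property (``every world is $\leftrightarroweq_k$-equivalent to a world of $T$''), hence $\eta^k_{M,T}\dashv\vdash\eta^k_{M',S}$ by Proposition~\ref{prop:classical_completeness}. For the inclusion part I would isolate a \emph{replacement principle}: for classical $\alpha,\beta$, if $\alpha\dashv\vdash\beta$ then $\top\subseteq\alpha\dashv\vdash\top\subseteq\beta$. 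Granting this, $\zeta^k_{M,T}\dashv\vdash\zeta^k_{M',S}$ follows by extracting each conjunct of the premise with $\wedge$E, replacing $\chi^k_{M,w}$ by the provably-equivalent $\chi^k_{M',w'}$ inside $\top\subseteq(\,\cdot\,)$, and reassembling the target conjunction with $\wedge$I (and symmetrically). Finally $\theta^k_{M,T}=\eta^k_{M,T}\wedge\zeta^k_{M,T}\dashv\vdash\eta^k_{M',S}\wedge\zeta^k_{M',S}=\theta^k_{M',S}$ by $\wedge$I and $\wedge$E.

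The real work is the replacement principle, and the main obstacle is that inclusion atoms are not downward closed, so one cannot case-split on $\top\subseteq\alpha$ directly (the minor subderivations of $\lor$E must have only $\ML$-formulas among their undischarged assumptions). I would circumvent this by routing through transitivity: first derive $\vdash\alpha\subseteq\beta$ and $\vdash\beta\subseteq\alpha$ from $\alpha\dashv\vdash\beta$, then apply Proposition~\ref{prop:derivability_results}\ref{prop:derivability_results_transitivity} to $\top\subseteq\alpha$ and $\alpha\subseteq\beta$ (resp.\ to $\top\subseteq\beta$ and $\beta\subseteq\alpha$). To get $\vdash\alpha\subseteq\beta$, use $\subseteq$Ext, which reduces it to deriving $\neg\alpha^\mathsf{x}\lor\mathsf{x}\subseteq\beta$ for each $\mathsf{x}\in\{\top,\bot\}$; from $\vdash\beta\subseteq\beta$ ($\subseteq$Id) and $\subseteq$Rdt we obtain $\vdash\neg\beta^\mathsf{x}\lor\mathsf{x}\subseteq\beta$, and since $\neg\alpha^\mathsf{x}\dashv\vdash\neg\beta^\mathsf{x}$ by Proposition~\ref{prop:classical_completeness}, applying $\lor_\subseteq$E to this disjunction (the $\mathsf{x}\subseteq\beta$ case gives the goal by $\lor$I, the $\neg\beta^\mathsf{x}$ case gives $\neg\alpha^\mathsf{x}$ classically and then $\lor$I) yields $\neg\alpha^\mathsf{x}\lor\mathsf{x}\subseteq\beta$. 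These are exactly the kinds of inclusion-atom manipulations performed in the proof of Proposition~\ref{prop:derivability_results} (following \cite{yang2022}); the only genuinely new ingredient over the propositional case is the harmless bookkeeping that matches up the Hintikka subformulas via Lemma~\ref{lemma:hintikka_provable_equivalence}.
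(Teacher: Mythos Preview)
Your proposal is correct and follows essentially the same decomposition as the paper: split $\theta^k$ into $\eta^k$ and $\zeta^k$, handle $\eta^k$ via the provable equivalence of Hintikka formulas (the paper does this syntactically with $\vee$E/$\vee$I rather than via Proposition~\ref{prop:classical_completeness}, but either works), and for $\zeta^k$ use a replacement principle for top inclusion atoms. The paper already has that replacement principle available as Lemma~\ref{lem:top_atom_derivability_results}\ref{lem:top_atom_derivability_results_ii} (proved there via $\lor_\subseteq$E and $\subseteq_\neg$E), so your detour through deriving $\vdash\alpha\subseteq\beta$ and invoking transitivity is correct but unnecessary.
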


In order to establish this, we first prove some results concerning primitive inclusion atoms. We also 
show another unrelated result concerning primitive inclusion atoms (\ref{lem:top_atom_derivability_results_bot_top_exc} in the lemma below) which 
is required 
in the sequel. 

\begin{lemma}\label{lem:top_atom_derivability_results}\
\begin{enumerate}[label=(\roman*)]
 \item \label{lem:top_atom_derivability_results_i}$\alpha_1,\dots,\alpha_n\vdash\top^n\subseteq\alpha_1\dots\alpha_n$, where $\top^n$ denotes the sequence $\top,\ldots,\top$ of length $n$.
 \item \label{lem:top_atom_derivability_results_ii}If $\alpha_i\vdash\beta_i$, then $\top^n\subseteq\alpha_1\dots\alpha_i\dots\alpha_n\vdash\top^n\subseteq\alpha_1\dots\beta_i\dots\alpha_n$. 
 \item \label{lem:top_atom_derivability_results_bot_top_exc} $\top^n\subseteq \alpha_{1}^{x_1}\ldots \alpha_n^{x_n}\vdash x_1\ldots x_n\subseteq \alpha_1\ldots \alpha_n$.
\end{enumerate}
\end{lemma}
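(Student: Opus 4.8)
The plan is to prove the three items essentially by the same pattern: reduce everything to classical reasoning (Proposition~\ref{prop:classical_completeness}) plus the basic inclusion rules $\subseteq$Id, $\subseteq$Exp, $\subseteq$Rdt, $\subseteq$Ext and $\subseteq_\lnot$E. For item~\ref{lem:top_atom_derivability_results_i}, I would argue by induction on $n$. The base case $n=1$ follows from $\subseteq$Id (which gives $\alpha_1\subseteq\alpha_1$) together with $\subseteq$Rdt applied with $\mathsf{x}=\top$, noting that $\neg\alpha_1^\top\lor\top\subseteq\alpha_1$ is $\neg\alpha_1\lor(\top\subseteq\alpha_1)$, and then using $\alpha_1$ with $\neg$E and $\lor$E to discharge the left disjunct; alternatively, and more cleanly, apply $\subseteq$Exp directly to $\alpha_1$ (as $\alpha_1^\top$) and the trivial empty inclusion atom. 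In fact the slickest route is a direct induction using $\subseteq$Exp: from $\alpha_1$ (i.e.\ $\alpha_1^\top$) and $\top^{n-1}\subseteq\alpha_2\ldots\alpha_n$ (the induction hypothesis applied to $\alpha_2,\dots,\alpha_n$), the rule $\subseteq$Exp with $x=\top$ yields $\top\top^{n-1}\subseteq\alpha_1\alpha_2\ldots\alpha_n$, which is $\top^n\subseteq\alpha_1\ldots\alpha_n$. The base case of this induction is the empty sequence, giving the trivially derivable empty inclusion atom (or one can start at $n=1$ as above).

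For item~\ref{lem:top_atom_derivability_results_ii}, I would again use $\subseteq$Exp. Assume $\alpha_i\vdash\beta_i$. First derive $\top^{i-1}\subseteq\alpha_1\ldots\alpha_{i-1}$ and $\top^{n-i}\subseteq\alpha_{i+1}\ldots\alpha_n$ from the premise $\top^n\subseteq\alpha_1\ldots\alpha_n$ by $\subseteq$Rdt/$\subseteq$Ext-style manipulation — or more directly by Proposition~\ref{prop:derivability_results}\ref{prop:derivability_results_contraction} and \ref{prop:derivability_results_commutativity} (contraction and commutativity for inclusion atoms), which let us extract and reorder components. The subtle point is extracting the information that the ``$i$th coordinate witness'' satisfies $\alpha_i$, hence $\beta_i$: from $\top^n\subseteq\alpha_1\ldots\alpha_n$ one gets (using commutativity to move the $i$th coordinate to the front, then contraction) $\top\subseteq\alpha_i$, and since $\alpha_i\vdash\beta_i$ gives $\vdash\alpha_i\to\beta_i$ classically, one concludes $\top\subseteq\beta_i$ — but to push this \emph{uniformly} with the other coordinates one really wants to keep the whole tuple intact. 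The cleanest approach: apply $\subseteq$Rdt to $\top^n\subseteq\alpha_1\ldots\alpha_n$ with the single $\mathsf{x}$ that is all $\top$, obtaining $\neg(\alpha_1\land\cdots\land\alpha_n)\lor\top^n\subseteq\alpha_1\ldots\alpha_n$; then case-split with $\lor_\subseteq$E. In the inclusion-atom case we have $\top^n\subseteq\alpha_1\ldots\alpha_n$ and can additionally assume $\alpha_1\land\cdots\land\alpha_n$ is witnessed; but actually the most economical proof avoids $\subseteq$Rdt altogether: note $\top^n\subseteq\alpha_1\ldots\alpha_n$ together with the classical fact $\alpha_i\vdash\beta_i$ lets us invoke $\subseteq$Exp after first peeling coordinate $i$ off via contraction/commutativity — peel to get $\top^{n-1}\subseteq\alpha_1\ldots\widehat{\alpha_i}\ldots\alpha_n$ and $\top\subseteq\alpha_i$, upgrade the latter to $\top\subseteq\beta_i$ using that $\alpha_i\to\beta_i$ is a theorem (Proposition~\ref{prop:classical_completeness}) and $\subseteq_\lnot$E plus $\lor$E, then re-assemble with $\subseteq$Exp and commutativity. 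I expect this re-assembly bookkeeping — tracking which coordinate sits where under commutativity/contraction — to be the main (purely administrative) obstacle.

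For item~\ref{lem:top_atom_derivability_results_bot_top_exc}, the target $x_1\ldots x_n\subseteq\alpha_1\ldots\alpha_n$ from $\top^n\subseteq\alpha_1^{x_1}\ldots\alpha_n^{x_n}$: by item~\ref{lem:top_atom_derivability_results_ii} (applied to each coordinate, using that $\alpha_i^{x_i}\vdash\alpha_i^{x_i}$ trivially — so actually item~\ref{lem:top_atom_derivability_results_ii} is not quite what moves us here) the real content is converting the \emph{left-hand side} from all-$\top$ to the tuple $\mathsf{x}$. I would do this coordinate by coordinate via $\subseteq$Exp read in the appropriate direction, or directly: recall Corollary~\ref{coro:inclfact} gives $\bot\subseteq\alpha\equiv\top\subseteq\neg\alpha$, so $\top\subseteq\alpha_i^{x_i}$ is provably equivalent (for $x_i=\bot$, using $\top\subseteq\neg\alpha_i\dashv\vdash\bot\subseteq\alpha_i$, which follows from $\subseteq$Ext/$\subseteq$Rdt and classical completeness) to $x_i\subseteq\alpha_i$; and for $x_i=\top$ it is literally $\top\subseteq\alpha_i$. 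Having each $x_i\subseteq\alpha_i$ in hand (extracted from the premise by contraction/commutativity as in item~\ref{lem:top_atom_derivability_results_ii}), I reassemble the full tuple $x_1\ldots x_n\subseteq\alpha_1\ldots\alpha_n$ by repeated $\subseteq$Exp — but $\subseteq$Exp prepends a \emph{classical formula} $\alpha^x$ on the left and the constant $x$ on the right, so I should instead build up using the transitivity/commutativity machinery of Proposition~\ref{prop:derivability_results} together with $\subseteq$Exp applied to the classical theorems $\alpha_i^{x_i}$ (which are \emph{not} theorems in general!). Hence the correct move for item~\ref{lem:top_atom_derivability_results_bot_top_exc} is: from $\top^n\subseteq\alpha_1^{x_1}\ldots\alpha_n^{x_n}$, apply $\subseteq$Exp \emph{in reverse spirit} — actually use $\subseteq$Rdt on $\top^n\subseteq\alpha_1^{x_1}\ldots\alpha_n^{x_n}$ with the all-$\top$ tuple to get $\neg(\alpha_1^{x_1}\land\cdots\land\alpha_n^{x_n})\lor\top^n\subseteq\alpha_1^{x_1}\ldots\alpha_n^{x_n}$, which equals $\neg(\mathsf{a}^{\mathsf{x}})\lor\top^n\subseteq(\mathsf{a}^{\mathsf{x}})$, then relate $\mathsf{a}^{\mathsf{x}}$-atoms to $\mathsf{x}\subseteq\mathsf{a}$ via $\subseteq$Ext: indeed by $\subseteq$Ext it suffices to derive $\neg\mathsf{a}^{\mathsf{x}'}\lor\mathsf{x}'\subseteq\mathsf{a}$ for every $\mathsf{x}'$, and for $\mathsf{x}'=\mathsf{x}$ this follows from the premise (which says $\alpha_i^{x_i}$ holds at the common witness, i.e.\ $\mathsf{a}^{\mathsf{x}}$ is satisfied, so we are in the right disjunct and need $\mathsf{x}\subseteq\mathsf{a}$, obtainable from $\top\subseteq\mathsf{a}^{\mathsf{x}}$ by the coordinatewise argument above), while for $\mathsf{x}'\neq\mathsf{x}$ one has $\neg\mathsf{a}^{\mathsf{x}'}$ as a classical consequence of $\mathsf{a}^{\mathsf{x}}$ (distinct full conjunctions are classically inconsistent), so $\lor$I and $\neg$I/$\subseteq_\lnot$E finish it. I expect the packaging of this last case-analysis — getting $\subseteq$Ext's hypothesis discharged cleanly using classical completeness for the ``$\mathsf{x}'\neq\mathsf{x}\Rightarrow\neg\mathsf{a}^{\mathsf{x}'}$'' step — to be where care is needed, but nothing deep is involved: every step is either a named inclusion rule or an application of Proposition~\ref{prop:classical_completeness}.
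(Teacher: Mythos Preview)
Your plan for item~\ref{lem:top_atom_derivability_results_i} is fine and is essentially the paper's argument (the paper starts from $\top\subseteq\top$ and uses $\subseteq$Exp followed by contraction to get the base case, then repeats $\subseteq$Exp; your inductive use of $\subseteq$Exp is the same idea modulo the direction of the induction).

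For items~\ref{lem:top_atom_derivability_results_ii} and~\ref{lem:top_atom_derivability_results_bot_top_exc}, however, your ``peel off, upgrade, re-assemble'' strategy has a genuine gap, and it is not merely administrative. Once you split $\top^n\subseteq\alpha_1\ldots\alpha_n$ into $\top^{n-1}\subseteq\alpha_1\ldots\widehat{\alpha_i}\ldots\alpha_n$ and $\top\subseteq\alpha_i$ (resp.\ $\top\subseteq\beta_i$), you have lost the information that the witnesses \emph{coincide}: semantically the two atoms may be witnessed by different worlds, so their conjunction does not entail $\top^n\subseteq\alpha_1\ldots\beta_i\ldots\alpha_n$. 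Correspondingly, $\subseteq$Exp cannot re-assemble them---it requires the classical premise $\beta_i$ (i.e.\ $\beta_i^{\top}$), not the inclusion atom $\top\subseteq\beta_i$. Your alternative via $\subseteq$Rdt is also misstated: applying $\subseteq$Rdt to $\top^n\subseteq\alpha_1\ldots\alpha_n$ gives $\neg(\top^n)^{\mathsf{x}}\lor\mathsf{x}\subseteq\alpha_1\ldots\alpha_n$, and $(\top^n)^{\top^n}=\top\wedge\cdots\wedge\top$, not $\alpha_1\wedge\cdots\wedge\alpha_n$.

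The paper's argument for~\ref{lem:top_atom_derivability_results_ii} avoids all coordinate-splitting: start from the classical tautology $\neg(\alpha_1\wedge\cdots\wedge\alpha_n)\lor(\alpha_1\wedge\cdots\wedge\alpha_n)$; in the right disjunct, use $\alpha_i\vdash\beta_i$ and item~\ref{lem:top_atom_derivability_results_i} to build $\top^n\subseteq\alpha_1\ldots\beta_i\ldots\alpha_n$ directly from the full conjunction; then apply $\lor_{\subseteq}$E, noting that the left disjunct $\neg(\alpha_1\wedge\cdots\wedge\alpha_n)$ together with the premise $\top^n\subseteq\alpha_1\ldots\alpha_n$ yields anything by $\subseteq_{\neg}$E (since $(\alpha_1\ldots\alpha_n)^{\top^n}=\alpha_1\wedge\cdots\wedge\alpha_n$). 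For~\ref{lem:top_atom_derivability_results_bot_top_exc} the trick is similarly direct: from $\mathsf{a}\subseteq\mathsf{a}$ and $\subseteq$Rdt obtain $\neg\mathsf{a}^{\mathsf{x}}\lor\mathsf{x}\subseteq\mathsf{a}$; the right disjunct is the goal, and the left disjunct $\neg\mathsf{a}^{\mathsf{x}}$ together with the premise $\top^n\subseteq\alpha_1^{x_1}\ldots\alpha_n^{x_n}$ is contradictory via $\subseteq_{\neg}$E, because $\mathsf{a}^{\mathsf{x}}=(\alpha_1^{x_1}\ldots\alpha_n^{x_n})^{\top^n}$.
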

\begin{proof}
\begin{enumerate}[label=(\roman*)] 
    \item $\top \subseteq \top$ by $\subseteq$Id; using $\alpha_n$ and $\top\subseteq \top$ we get $\top\top\subseteq \alpha_n\top$ by $\subseteq$Exp; and then $\top \top\subseteq \alpha_n\top\vdash \top\subseteq \alpha_n$ by Proposition \ref{prop:derivability_results} \ref{prop:derivability_results_contraction}. We repeat the $\subseteq$Exp-step with $\alpha_{n-1},\ldots, \alpha_1$.
    
 \item By Proposition \ref{prop:classical_completeness}, $\vdash\neg(\alpha_1\land\dots\land\alpha_n)\lor(\alpha_1\land\dots\land\alpha_n)$. From the assumption $\alpha_i\vdash\beta_i$ together with item \ref{lem:top_atom_derivability_results_i} we have that $\alpha_1\land\dots\land\alpha_n\vdash\alpha_1\land\dots\land\beta_i\land\dots\land\alpha_n\vdash\top^n\subseteq\alpha_1\dots\beta_i\dots\alpha_n$. Thus $\neg(\alpha_1\land\dots\land\alpha_n)\lor(\alpha_1\land\dots\land\alpha_n)\vdash\neg(\alpha_1\land\dots\land\alpha_n)\lor(\top^n\subseteq\alpha_1\dots\beta_i\dots\alpha_n)$. By $\lor_\subseteq$E, it suffices to show: 
      \begin{enumerate}[label=(\alph*)]
         \item $\top^n\subseteq\alpha_1\dots\alpha_n,\neg(\alpha_1\land\dots\land\alpha_n) \vdash \top^n\subseteq\alpha_1\dots\beta_i\dots\alpha_n$
         
         \item $\top^n\subseteq\alpha_1\dots\alpha_n ,\top^n\subseteq\alpha_1\dots\beta_i\dots\alpha_n\vdash \top^n\subseteq\alpha_1\dots\beta_i\dots\alpha_n$
     \end{enumerate}
We have that (a) follows by $\subseteq_\neg$E, and (b) is immediate.

        \item By $\subseteq$Id, we have $\alpha_1\ldots\alpha_n\subseteq \alpha_1\ldots\alpha_n$ and then by $\subseteq$Rdt, $\lnot (\alpha_1\ldots\alpha_n^{x_1\ldots x_n})\lor x_1\ldots x_n \subseteq \alpha_1\ldots \alpha_n$. By $\vee_\subseteq$E it suffices to show $\lnot (\alpha_1\ldots\alpha_n^{x_1\ldots x_n}),\top^n\subseteq \alpha_1^{x_1}\ldots \alpha_n^{x_n}\vdash x_1\ldots x_n\subseteq \alpha_1\ldots \alpha_n$ and $x_1\ldots x_n \subseteq \alpha_1\ldots \alpha_n\vdash x_1\ldots x_n\subseteq \alpha_1\ldots \alpha_n$. The latter is immediate; the former follows by $\subseteq_\lnot$E since $\alpha_1\ldots\alpha_n^{x_1\ldots x_n}=(\alpha_1^{x_1}\ldots \alpha_n^{x_n})^{\top^n}$.\qedhere
\end{enumerate}
\end{proof}

\begin{proof}[Proof of Lemma \ref{lemma:team_hintikka_provable_equivalence}]
Suppose that $T\leftrightarroweq_k S$. If $T=S=\emptyset$, then $\theta_{T}^k=\theta_{S}^k$. Otherwise, both $T$ and $S$ are nonempty. By symmetry it suffices to show $\theta_{T}^k\vdash\theta_{S}^k$. We have that $\theta_{T}^k\vdash\bigvee_{w\in T}\chi_{w}^k$ by $\land$E. Let $w\in T$. By $T\leftrightarroweq_k S$, there is some $u\in S$ such that $w\leftrightarroweq_k u$, so that by Lemma \ref{lemma:hintikka_provable_equivalence}, $\chi^k_w\dashv\vdash \chi^k_u$. Therefore, by $\vee$I and $\vee$E, $\bigvee_{w\in T}\chi_{w}^k\vdash\bigvee_{u\in S}\chi_{u}^k$. Now let $u\in S$. By $T\leftrightarroweq_k S$, there is some $w_u\in T$ such that $w_u\leftrightarroweq_k u$, so that by Lemma \ref{lemma:hintikka_provable_equivalence}, $\chi^k_{w_u}\dashv\vdash \chi^k_u$. By $\land$E and Lemma \ref{lem:top_atom_derivability_results} \ref{lem:top_atom_derivability_results_ii}, $\theta_{T}^k\vdash\top\subseteq\chi_{w_u}^k\vdash\top\subseteq\chi_{u}^k$. Therefore by $\land$I, $\theta_{T}^k\vdash\bigwedge_{u\in S}(\top\subseteq\chi_{u}^k)$, so that $\theta_{T}^k\vdash\bigvee_{u\in S}\chi_{u}^k\land\bigwedge_{u\in S}(\top\subseteq\chi_{u})=\theta_{S}^k$. 
\end{proof}

Next we prove some important semantic facts concerning the relationship between formulas in normal form and disjoint unions. The \emph{disjoint union} $\biguplus_{i\in I} M_i $ of the models $M_i$ ($i\in I$) is defined as usual, where in particular we recall that the domain of the disjoint union is defined to be $\biguplus_{i\in I}W_i=\bigcup_{i\in I}(W_i \times \{i\})$. We also extend the notion to properties: the \emph{disjoint union} of a property $\mathcal{C}=\{(M_i,T_i)\mid i\in I\}$ is $\biguplus \mathcal{C}=(\biguplus_{i\in I}M_i, \biguplus_{i\in I}T_i)$, where $\biguplus_{i\in I}T_i=\bigcup_{i\in I}(T_i\times \{i\})$. To simplify notation, we define the disjoint union $\biguplus \emptyset$ of the empty property to be $(M^*,\emptyset)$ for some fixed model $M^*$.
We refer to a world $(w,i)$ in a disjoint union as simply $w$, and  to a team  $T \times\{i\}$ as simply $T$. The following result is standard (see, for instance, \cite{blackburn2001}):
\begin{proposition} \label{prop:disjoint_union}
    For any $i\in I$, any $w\in W_i$, and any $k\in \mathbb{N}$: $M_i,w \leftrightarroweq_k \biguplus_{i\in I} M_i, w $. Therefore, for any $T\subseteq W_i$: $M_i,T\leftrightarroweq_k \biguplus_{i\in I} M_i, T $.
\end{proposition}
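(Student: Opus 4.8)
The plan is to prove the world-level claim $M_i,w\leftrightarroweq_k\biguplus_{i\in I}M_i,w$ by induction on $k$, and then obtain the team-level claim as a one-line consequence. The structural fact to exploit is that the disjoint union introduces no accessibility edges between distinct components: writing $R$ for the accessibility relation of $\biguplus_{i\in I}M_i$, we have $(w,i)\,R\,(v,j)$ iff $j=i$ and $w\,R_i\,v$, so the successors of $(w,i)$ in the disjoint union are precisely the worlds $(v,i)$ with $w\,R_i\,v$.

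For the base case $k=0$, I would note that the valuation of the disjoint union is defined componentwise, so $w\in W_i$ and $(w,i)$ satisfy exactly the same propositional symbols among those in $\mathsf{X}$, which is precisely $M_i,w\leftrightarroweq_0\biguplus_{i\in I}M_i,w$. For the inductive step, assuming the claim at level $k$, the $0$-bisimilarity requirement carries over from the base case, and both the forth and back conditions reduce, via the structural fact above, to matching a successor $v$ of $w$ in $M_i$ with the ``same'' successor $(v,i)$ in the disjoint union (and conversely); the induction hypothesis then supplies $M_i,v\leftrightarroweq_k\biguplus_{i\in I}M_i,v$, which is exactly the witness required in either direction. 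This gives $M_i,w\leftrightarroweq_{k+1}\biguplus_{i\in I}M_i,w$.

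For the team statement, I would fix $T\subseteq W_i$ and identify $T$ with the team $T\times\{i\}$ of $\biguplus_{i\in I}M_i$, then verify the forth and back conditions of Definition \ref{teamkbisimdef} directly: each $w\in T$ serves as its own $k$-bisimilar partner in $T\times\{i\}$ by the world-level result just established, and symmetrically each $(v,i)\in T\times\{i\}$ has partner $v\in T$. Hence $M_i,T\leftrightarroweq_k\biguplus_{i\in I}M_i,T$.

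I do not expect a genuine obstacle: the argument is routine once the componentwise structure of the accessibility relation and valuation is made explicit, and the team case is then immediate from the world case together with the definition of team bisimulation. The only point needing minor care is the bookkeeping around identifying a world $w\in W_i$ with the pair $(w,i)$ and a team $T\subseteq W_i$ with $T\times\{i\}$, a convention the paper has already adopted.
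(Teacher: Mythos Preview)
Your proposal is correct; the induction on $k$ using the componentwise structure of the accessibility relation and valuation is exactly the standard argument, and the team-level statement follows immediately from the world-level one via Definition~\ref{teamkbisimdef}. The paper does not actually supply a proof of this proposition---it simply labels the result as standard and cites \cite{blackburn2001}---so there is nothing further to compare.
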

We have: 

\begin{lemma} \label{lemma:nf_disjoint_union} 
$M,S\models\bigvee_{(M',T)\in\mathcal{C}}\theta^k_{M^\prime,T}\text{ iff }M,S\leftrightarroweq_k\biguplus\mathcal{C}' \text{ for some }\mathcal{C}'\subseteq \mathcal{C}.$
\end{lemma}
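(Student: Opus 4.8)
The plan is to prove both directions of the biconditional, using the characterizations of the Hintikka formulas for teams from Lemma~\ref{lemma:team_bisimulation}\ref{lemma:team_bisimulation_iii} together with the bisimulation-invariance of disjoint unions (Proposition~\ref{prop:disjoint_union}) and the basic properties of team bisimulation (Lemma~\ref{lemma:bisimulation_properties}, plus the fact that $\leftrightarroweq_k$ is an equivalence relation).

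\medskip
\noindent\textbf{The ``if'' direction.} Suppose $M,S\leftrightarroweq_k\biguplus\mathcal{C}'$ for some $\mathcal{C}'\subseteq\mathcal{C}$. Write $\mathcal{C}'=\{(M_j,T_j)\mid j\in J\}$, so that $\biguplus\mathcal{C}'=(\biguplus_{j\in J}M_j,\ \bigcup_{j\in J}(T_j\times\{j\}))$ (or $(M^*,\emptyset)$ if $J=\emptyset$). First I would handle the empty case: if $\mathcal{C}'=\emptyset$ then $\biguplus\mathcal{C}'=(M^*,\emptyset)$, and $M,S\leftrightarroweq_k M^*,\emptyset$ forces $S=\emptyset$, so $M,S$ satisfies the disjunction by the empty team property (if $\mathcal{C}=\emptyset$ the disjunction is $\bot$, but then the hypothesis cannot hold since $\mathcal{C}'\subseteq\mathcal{C}=\emptyset$ gives $S=\emptyset\models\bot$ — actually one should just note $\mathcal{C}'=\emptyset$ is only relevant when $\mathcal{C}\neq\emptyset$ so there is some disjunct; in any case the empty team satisfies everything of the form $\theta^k_{M',T}$). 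Otherwise $J\neq\emptyset$. For each $j\in J$, Proposition~\ref{prop:disjoint_union} gives $M_j,T_j\leftrightarroweq_k\biguplus\mathcal{C}'$ (viewing $T_j$ as $T_j\times\{j\}$ inside the disjoint union via the canonical embedding, using that $T_j\subseteq W_j$). Composing with $\biguplus\mathcal{C}'\leftrightarroweq_k M,S$ and using transitivity and symmetry of $k$-bisimilarity, we get $M_j,T_j\leftrightarroweq_k M,S$ for every $j\in J$. Now I claim $M,S\models\theta^k_{M_j,T_j}$ for each such $j$: by Lemma~\ref{lemma:team_bisimulation}\ref{lemma:team_bisimulation_iii}, $M,S\models\theta^k_{M_j,T_j}$ iff $M_j,T_j\leftrightarroweq_k M,S$ or $S=\emptyset$ — and the first disjunct holds. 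Since each $(M_j,T_j)\in\mathcal{C}'\subseteq\mathcal{C}$, the formula $\theta^k_{M_j,T_j}$ is one of the disjuncts of $\bigvee_{(M',T)\in\mathcal{C}}\theta^k_{M',T}$; hence $M,S$ satisfies that disjunct and therefore satisfies the whole disjunction (using $\vee$I, semantically: take $S$ itself as the witness subteam for this disjunct and the empty team for all others, which works by the empty team property). Here I should be slightly careful that the disjunction may be infinite as written, but per the conventions of the paper we may take $\mathcal{C}$ (hence the disjunction) to be finite up to equivalence; in any case satisfaction of a single disjunct suffices.

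\medskip
\noindent\textbf{The ``only if'' direction.} Suppose $M,S\models\bigvee_{(M',T)\in\mathcal{C}}\theta^k_{M',T}$. By the semantics of disjunction, there is a family of subteams $S_{(M',T)}\subseteq S$ for $(M',T)\in\mathcal{C}$ with $\bigcup_{(M',T)\in\mathcal{C}}S_{(M',T)}=S$ and $M,S_{(M',T)}\models\theta^k_{M',T}$ for each $(M',T)\in\mathcal{C}$. By Lemma~\ref{lemma:team_bisimulation}\ref{lemma:team_bisimulation_iii}, for each $(M',T)$ either $M',T\leftrightarroweq_k M,S_{(M',T)}$ or $S_{(M',T)}=\emptyset$. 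Let $\mathcal{C}'\subseteq\mathcal{C}$ be the set of those $(M',T)\in\mathcal{C}$ for which $S_{(M',T)}\neq\emptyset$; for these we have $M',T\leftrightarroweq_k M,S_{(M',T)}$, hence by symmetry $M,S_{(M',T)}\leftrightarroweq_k M',T$. I claim $M,S\leftrightarroweq_k\biguplus\mathcal{C}'$. If $\mathcal{C}'=\emptyset$, then every $S_{(M',T)}=\emptyset$, so $S=\bigcup S_{(M',T)}=\emptyset$, and $\biguplus\mathcal{C}'=(M^*,\emptyset)$, so both sides are empty teams and are trivially $k$-bisimilar. If $\mathcal{C}'\neq\emptyset$, I verify the forth and back conditions of Definition~\ref{teamkbisimdef}. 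For forth: take $w\in S$; then $w\in S_{(M',T)}$ for some $(M',T)$, and this $S_{(M',T)}$ is nonempty so $(M',T)\in\mathcal{C}'$; from $M,S_{(M',T)}\leftrightarroweq_k M',T$ (forth condition) there is $w'\in T$ with $w\leftrightarroweq_k w'$, and composing with the canonical embedding of $T$ into $\biguplus\mathcal{C}'$ (which is a $k$-bisimulation on worlds, Proposition~\ref{prop:disjoint_union}, and in particular preserves $\leftrightarroweq_k$-classes of worlds), the image of $w'$ in $\biguplus_{(M',T)\in\mathcal{C}'}T$ is $\leftrightarroweq_k$ to $w$. For back: take a world $w'$ of $\biguplus\mathcal{C}'$ lying in the copy of some $T$ with $(M',T)\in\mathcal{C}'$; by Proposition~\ref{prop:disjoint_union} it is $\leftrightarroweq_k$ to the corresponding $w'\in T$ in $M'$, and since $M,S_{(M',T)}\leftrightarroweq_k M',T$ (back condition) there is $w\in S_{(M',T)}\subseteq S$ with $w\leftrightarroweq_k w'$; transitivity finishes it. This establishes $M,S\leftrightarroweq_k\biguplus\mathcal{C}'$ with $\mathcal{C}'\subseteq\mathcal{C}$, as required.

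\medskip
\noindent\textbf{Main obstacle.} The routine parts are the disjunction-semantics bookkeeping and the empty-team edge cases. The one point that needs genuine care is the bookkeeping around the disjoint union: making precise that each component team $T_j$, viewed via the canonical injection as $T_j\times\{j\}\subseteq\biguplus_j W_j$, is $k$-bisimilar (as a team) to $T_j$ inside the original model $M_j$, and that world-level $k$-bisimilarity along this injection can be freely composed with team-level $k$-bisimilarities via transitivity and symmetry. All of this is exactly the content of Proposition~\ref{prop:disjoint_union} together with the observation (already noted in the excerpt) that $k$-bisimilarity of worlds and of teams are equivalence relations; so once those are invoked, the argument goes through cleanly and the proof is essentially a matter of assembling the forth/back conditions from the corresponding conditions for the pieces.
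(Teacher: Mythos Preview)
Your ``only if'' direction is correct and is essentially the paper's argument, made more explicit by verifying the forth and back conditions directly.

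However, your ``if'' direction contains a genuine error. From Proposition~\ref{prop:disjoint_union} you only get that $M_j,T_j$ is $k$-bisimilar to the disjoint-union model \emph{with team $T_j\times\{j\}$}, not with the full team $\bigcup_{i\in J}(T_i\times\{i\})$. You cannot therefore compose with $\biguplus\mathcal{C}'\leftrightarroweq_k M,S$ to conclude $M_j,T_j\leftrightarroweq_k M,S$. For a concrete failure, take $\mathcal{C}'=\{(M_1,\{w_1\}),(M_2,\{w_2\})\}$ with $w_1\not\leftrightarroweq_k w_2$; then any $S$ with $M,S\leftrightarroweq_k\biguplus\mathcal{C}'$ must contain a world $k$-bisimilar to $w_2$, so the forth condition for $M,S\leftrightarroweq_k M_1,\{w_1\}$ fails.

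What is missing is the step that splits $S$ into pieces matching the components of the disjoint union. The paper obtains this from Lemma~\ref{lemma:bisimulation_properties}\ref{lemma:bisimulation_properties_iii}: since $\biguplus\mathcal{C}'$ has team $\bigcup_{i\in I}(T_i\times\{i\})$ and $M,S\leftrightarroweq_k\biguplus\mathcal{C}'$, there are subteams $S_i\subseteq S$ with $\bigcup_i S_i=S$ and $M,S_i\leftrightarroweq_k \biguplus_j M_j,\,T_i\times\{i\}$ for each $i$. Then Proposition~\ref{prop:disjoint_union} gives $M,S_i\leftrightarroweq_k M_i,T_i$, so by Lemma~\ref{lemma:team_bisimulation}\ref{lemma:team_bisimulation_iii} each $S_i\models\theta^k_{M_i,T_i}$, and hence $S=\bigcup_i S_i$ satisfies the disjunction.
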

\begin{proof}
$\Longrightarrow$: If $M,S\models \bigvee_{(M^\prime,T)\in\mathcal{C}}\theta^k_{M^\prime,T}$, then $mS=\bigcup_{(M',T)\in \mathcal{C}}S_{M',T}$ where $S_{M',T}\models \theta^k_{M',T}$. By Lemma \ref{lemma:team_bisimulation} \ref{lemma:team_bisimulation_iii}, either $S_{M',T}\leftrightarroweq_k T$ or $S_{M',T}=\emptyset$. Let $\mathcal{C}':=\{(M',T)\mid S_{M',T}\leftrightarroweq_k T\}$. Then $S=\bigcup_{(M',T)\in \mathcal{C}}S_{M',T}=\bigcup_{(M',T)\in \mathcal{C}'}S_{M',T}$. By Proposition \ref{prop:disjoint_union}, we have:
$$M,S=M,\bigcup_{(M',T)\in\mathcal{C}'}S_{M',T}\leftrightarroweq_k\biguplus\{(M',T)\mid T\in \mathcal{C}'\}=\biguplus \mathcal{C}'.$$
Note that if $S_{M',T}=\emptyset$ for all $(M',T)\in \mathcal{C}$, then $S=\emptyset$ and $\biguplus \mathcal{C}'=\biguplus \emptyset=(M^*,\emptyset)$ by our convention.

$\Longleftarrow$: Let $\mathcal{C}'=\{(M_i,T_i)\mid i \in I\}$. If $M,S\leftrightarroweq_k \biguplus\mathcal{C}'$, then by Lemma \ref{lemma:bisimulation_properties} \ref{lemma:bisimulation_properties_iii}, there are subteams $S_i\subseteq S$ such that $\bigcup_{i\in I}S_i=S$ and $M,S_i\leftrightarroweq_k \biguplus_{j \in I}M_j,T_i$. By Proposition \ref{prop:disjoint_union}, $\biguplus_{j \in I}M_j,T_i\leftrightarroweq_k M_i,T_i$ so that also $M,S_i\leftrightarroweq_k M_i,T_i$. By Lemma \ref{lemma:team_bisimulation} \ref{lemma:team_bisimulation_iii}, $M,S_i\models \theta^k_{M_i,T_i}$ so that $M,S\models \bigvee_{(M_i,T_i)\in \mathcal{C'}}\theta^k_{M_i,T_i}$, and by the empty team property, $M,S\models \bigvee_{(M^\prime,T)\in \mathcal{C}}\theta^k_{M^\prime,T}$.
\end{proof}

\begin{cor}\label{coro:disjoint_union_entailment}
    $\bigvee_{(M,T)\in\mathcal{C}}\theta^k_{M,T}\models\bigvee_{(M^\prime,S)\in\mathcal{D}}\theta^k_{M^\prime,S}$ iff for all $(M^{\prime\prime},U)\in\mathcal{C}$, $M^{\prime\prime},U\leftrightarroweq_k \biguplus\mathcal{D}_{(M'',U)}$ for some $\mathcal{D}_{(M'',U)}\subseteq\mathcal{D}$.
\end{cor}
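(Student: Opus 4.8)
The plan is to obtain this as a straightforward consequence of Lemma \ref{lemma:nf_disjoint_union}, together with the fact that team $k$-bisimulation is an equivalence relation and the transfer property recorded in Proposition \ref{prop:disjoint_union}. So throughout I would keep Lemma \ref{lemma:nf_disjoint_union} as the main engine and simply feed the right models with teams into it.

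For the left-to-right direction, I would fix $(M'',U)\in\mathcal{C}$. Since $U\leftrightarroweq_k U$, Lemma \ref{lemma:team_bisimulation} \ref{lemma:team_bisimulation_iii} gives $M'',U\models\theta^k_{M'',U}$, and hence, by assigning $U$ to the disjunct $\theta^k_{M'',U}$ and $\emptyset$ to every other disjunct (which is licensed by the empty team property), $M'',U\models\bigvee_{(M,T)\in\mathcal{C}}\theta^k_{M,T}$. Applying the assumed entailment yields $M'',U\models\bigvee_{(M',S)\in\mathcal{D}}\theta^k_{M',S}$, and then Lemma \ref{lemma:nf_disjoint_union} hands us a $\mathcal{D}_{(M'',U)}\subseteq\mathcal{D}$ with $M'',U\leftrightarroweq_k\biguplus\mathcal{D}_{(M'',U)}$, which is exactly what is required.

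For the right-to-left direction, suppose such sets $\mathcal{D}_{(M'',U)}$ exist for all $(M'',U)\in\mathcal{C}$, and let $N,P$ be an arbitrary model with team such that $N,P\models\bigvee_{(M,T)\in\mathcal{C}}\theta^k_{M,T}$. By Lemma \ref{lemma:nf_disjoint_union} there is $\mathcal{C}'\subseteq\mathcal{C}$ with $N,P\leftrightarroweq_k\biguplus\mathcal{C}'$. Put $\mathcal{D}':=\bigcup_{(M'',U)\in\mathcal{C}'}\mathcal{D}_{(M'',U)}\subseteq\mathcal{D}$. The key claim is that $\biguplus\mathcal{C}'\leftrightarroweq_k\biguplus\mathcal{D}'$: to check the forth condition, take a world of the team of $\biguplus\mathcal{C}'$; it belongs to some component $(M'',U)\in\mathcal{C}'$, and using $M'',U\leftrightarroweq_k\biguplus\mathcal{D}_{(M'',U)}$ together with Proposition \ref{prop:disjoint_union} (to pass between world-bisimilarity inside a single component and inside the ambient disjoint union, via transitivity) we find a world of $\biguplus\mathcal{D}_{(M'',U)}$ bisimilar to it, and this world still lies inside the larger union $\biguplus\mathcal{D}'$; the back condition is symmetric. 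Then transitivity of $\leftrightarroweq_k$ gives $N,P\leftrightarroweq_k\biguplus\mathcal{D}'$, and a second application of Lemma \ref{lemma:nf_disjoint_union} (using $\mathcal{D}'\subseteq\mathcal{D}$) yields $N,P\models\bigvee_{(M',S)\in\mathcal{D}}\theta^k_{M',S}$, completing the proof.

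The only point requiring any care --- the \emph{main obstacle}, such as it is --- is the bookkeeping in the claim $\biguplus\mathcal{C}'\leftrightarroweq_k\biguplus\mathcal{D}'$: one must use that team bisimulation is a purely pointwise condition on worlds, so that matching each world of a component against a world of the corresponding $\biguplus\mathcal{D}_{(M'',U)}$ --- and conversely --- automatically produces a match inside the global unions, with no interaction between distinct components. The edge cases where $\mathcal{C}$, $\mathcal{D}$, or some $\mathcal{D}_{(M'',U)}$ is empty are absorbed exactly as in Lemma \ref{lemma:nf_disjoint_union}, via the convention $\biguplus\emptyset=(M^*,\emptyset)$ and the empty team property.
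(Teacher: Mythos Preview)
Your proposal is correct and follows essentially the same approach as the paper's proof: both directions use Lemma~\ref{lemma:nf_disjoint_union} as the main engine, with the left-to-right direction feeding $(M'',U)$ itself into the entailment, and the right-to-left direction assembling $\mathcal{D}':=\bigcup_{(M'',U)\in\mathcal{C}'}\mathcal{D}_{(M'',U)}$ and checking $\biguplus\mathcal{C}'\leftrightarroweq_k\biguplus\mathcal{D}'$. The paper phrases the latter step via the intermediate disjoint union $\biguplus\{\biguplus\mathcal{D}_{(M'',U)}\mid (M'',U)\in\mathcal{C}'\}$ rather than unfolding the forth/back conditions directly, but this is a purely cosmetic difference.
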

\begin{proof}
    $\Longrightarrow$: Let $(M'',U)\in \mathcal{C}$. By Lemma \ref{lemma:team_bisimulation} \ref{lemma:team_bisimulation_iii}, $U\models \theta^k_U$. By the empty team property, $U \models \bigvee_{{(M,T)}\in\mathcal{C}}\theta^k_{T}$, and then by assumption, $U\models \bigvee_{{(M',S)}\in\mathcal{D}}\theta^k_{S}$. The result now follows by Lemma \ref{lemma:nf_disjoint_union}.

    $\Longleftarrow$: Let $M''',Y\models \bigvee_{{(M,T)}\in\mathcal{C}}\theta^k_{T}$. By Lemma \ref{lemma:nf_disjoint_union}, $M''',Y\leftrightarroweq_k \biguplus \mathcal{C}'$ for some $\mathcal{C}'\subseteq\mathcal{C}$. By assumption, for each $(M'',U)\in \mathcal{C'}$ there is some $\mathcal{D}_{(M'',U)}\subseteq\mathcal{D}$ such that $M'',U\leftrightarroweq_k \biguplus\mathcal{D}_{(M'',U)}$. Then $M''',Y\leftrightarroweq_k \biguplus \{\biguplus \mathcal{D}_{(M'',U)}\mid (M'',U)\in \mathcal{C}'\}\leftrightarroweq_k \biguplus \bigcup_{(M'',U)\in \mathcal{C}'}\mathcal{D}_{(M'',U)} $, so that by Lemma \ref{lemma:nf_disjoint_union}, $Y\models \bigvee_{{(M',S)}\in\mathcal{D}}\theta^k_{S}$.
\end{proof}

The final lemma required for the completeness proof is a proof-theoretic analogue of (one direction of) Lemma \ref{lemma:nf_disjoint_union}.

\reqnomode
\begin{lemma}\label{lemma:disjoint_union_provability}
 $\theta^k_{\biguplus\mathcal{D}}\vdash\bigvee_{(M,S)\in\mathcal{D}}\theta_{M,S}^k$. 
\end{lemma}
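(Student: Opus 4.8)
The plan is to reduce the claim, by means of the standard disjoint-union facts, to a statement about a single model, and then to establish the key case of that statement using the distribution rule $\subseteq$Distr. Write $\mathcal{D}=\{(M_i,S_i)\mid i\in I\}$ (a finite property), set $M:=\biguplus_{i\in I}M_i$, and let $T_i:=S_i\times\{i\}$ be the $i$th component team inside $M$, so that $\biguplus\mathcal{D}=(M,\bigcup_{i\in I}T_i)$. By Proposition \ref{prop:disjoint_union} we have $M,T_i\leftrightarroweq_k M_i,S_i$ for each $i$, hence $\theta^k_{M,T_i}\dashv\vdash\theta^k_{M_i,S_i}$ by Lemma \ref{lemma:team_hintikka_provable_equivalence}. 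It therefore suffices to prove the following statement involving only one model $N$: for any finitely many subteams $V_1,\dots,V_n$ of $N$,
\[\theta^k_{N,\bigcup_{j=1}^{n}V_j}\vdash\bigvee_{j=1}^{n}\theta^k_{N,V_j}.\]

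The heart of the argument is the binary case $\theta^k_{N,U\cup U'}\vdash\theta^k_{N,U}\lor\theta^k_{N,U'}$. If $U$ or $U'$ is empty this is immediate by $\lor$I, so assume both are nonempty. Write $\eta_V:=\bigvee_{w\in V}\chi^k_{N,w}$ and $\zeta_V:=\bigwedge_{w\in V}(\top\subseteq\chi^k_{N,w})$, so that $\theta^k_{N,V}=\eta_V\land\zeta_V$; by the provable idempotence, commutativity, and associativity of $\lor$ and $\land$ we have $\theta^k_{N,U\cup U'}\dashv\vdash(\eta_U\lor\eta_{U'})\land\zeta_U\land\zeta_{U'}$. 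From this formula extract, by $\land$E, the disjunction $\eta_U\lor\eta_{U'}$ together with every (unary top) inclusion atom occurring in $\zeta_U$ or $\zeta_{U'}$. Now apply $\subseteq$Distr to $\eta_U\lor\eta_{U'}$ using precisely the atoms of $\zeta_U$. Since each such atom has the form $\top\subseteq\chi^k_{N,w}$, the disjunct it contributes to the left-hand side is just $\chi^k_{N,w}$, so the contributed disjuncts (one per $w\in U$) together amount to $\eta_U$, and the rule yields $((\eta_U\lor\eta_U)\land\zeta_U)\lor\eta_{U'}\dashv\vdash(\eta_U\land\zeta_U)\lor\eta_{U'}$. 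Commuting this disjunction and applying $\subseteq$Distr once more — this time to $\eta_{U'}\lor(\eta_U\land\zeta_U)$ with the atoms of $\zeta_{U'}$, which are still available — gives $((\eta_{U'}\lor\eta_{U'})\land\zeta_{U'})\lor(\eta_U\land\zeta_U)\dashv\vdash\theta^k_{N,U'}\lor\theta^k_{N,U}$, and a final commutation completes the binary case.

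The displayed single-model statement now follows by a straightforward induction on $n$: the cases $n\le1$ are trivial (using $\theta^k_{N,\emptyset}=\bot\land\top\vdash\bot$), and the inductive step applies the binary case to $\bigcup_{j<n}V_j$ and $V_n$ together with the derivable fact that $\phi\vdash\phi'$ implies $\phi\lor\psi\vdash\phi'\lor\psi$. Instantiating $N:=M$ and $V_i:=T_i$, and using the equivalences $\theta^k_{M,T_i}\dashv\vdash\theta^k_{M_i,S_i}$ obtained above, we get $\theta^k_{\biguplus\mathcal{D}}=\theta^k_{M,\bigcup_iT_i}\vdash\bigvee_i\theta^k_{M,T_i}\vdash\bigvee_i\theta^k_{M_i,S_i}=\bigvee_{(M',S')\in\mathcal{D}}\theta^k_{M',S'}$, which is the claim (the degenerate case $\mathcal{D}=\emptyset$ reduces to $\bot\land\top\vdash\bot$). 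The one step requiring an actual idea rather than bookkeeping is the binary case: one must see that $\subseteq$Distr, fed the top inclusion atoms of one side of the union, reinserts exactly that side's Hintikka disjuncts into the corresponding disjunct while attaching the side's inclusion atoms, and that two applications — one per side, separated by a $\lor$-commutation — suffice. Everything else is routine manipulation of $\lor$ and $\land$ plus Proposition \ref{prop:disjoint_union} and Lemma \ref{lemma:team_hintikka_provable_equivalence}.
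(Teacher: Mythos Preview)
Your proof is correct and follows essentially the same approach as the paper: the key step in both is to iterate $\subseteq$Distr, feeding in the top inclusion atoms $\top\subseteq\chi^k_w$ for one component team at a time so as to peel off that component's $\theta^k$-formula as a disjunct. The only differences are organizational: you make the passage between Hintikka formulas computed in $\biguplus M_i$ and in the individual $M_i$ explicit via Proposition~\ref{prop:disjoint_union} and Lemma~\ref{lemma:team_hintikka_provable_equivalence} (the paper absorbs this into its notational convention for disjoint unions), and you package the iteration as a binary case plus induction on $n$ rather than writing it out as an unrolled chain of derivations.
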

\begin{proof}
If $\mathcal{D}$ is empty, then $\theta^k_{\biguplus\mathcal{D}}\dashv\vdash\bot \vdash\bigvee_{(M,S)\in\mathcal{D}}\theta_{S}^k$. Otherwise let $\mathcal{D}=\{S_1,\ldots ,S_n\}$.
We have: 
\begin{equation*}
\theta^k_{\biguplus\mathcal{D}} 
=\bigvee_{w\in \biguplus\mathcal{D}}\chi^k_{w}\land\bigwedge_{w\in \biguplus\mathcal{D}}(\top\subseteq \chi^k_{w}) 
=\bigvee_{(M,S)\in\mathcal{D}}\bigvee_{w\in S}\chi^k_{w}\land\bigwedge_{(M,S)\in\mathcal{D}}\bigwedge_{w \in S}(\top\subseteq \chi^k_{w}),
\end{equation*}

from which we derive:    
    \begin{align*}
    &&&\vdash&& (\bigvee_{w\in S_1}\chi^k_{w}\vee\bigvee_{2\leq i\leq n}\bigvee_{w\in S_i}\chi^k_{w})\land \bigwedge_{w \in S_1}(\top\subseteq \chi^k_{w})\land \bigwedge_{2\leq i\leq n}\bigwedge_{w \in S_i}(\top\subseteq \chi^k_{w})\\
    &&&\vdash&& (((\bigvee_{w\in S_1}\chi^k_{w}\vee\bigvee_{w\in S_1}\chi^k_{w})\land \bigwedge_{w \in S_1}(\top\subseteq \chi^k_{w}))\vee\bigvee_{2\leq i\leq n}\bigvee_{w\in S_i}\chi^k_{w})\land \bigwedge_{2\leq i\leq n}\bigwedge_{w \in S_i}(\top\subseteq \chi^k_{w})\tag{$\subseteq $Distr}\\
    &&&\vdash&& (\theta^k_{S_1}\vee\bigvee_{2\leq i\leq n}\bigvee_{w\in S_i}\chi^k_{w})\land \bigwedge_{2\leq i\leq n}\bigwedge_{w \in S_i}(\top\subseteq \chi^k_{w})\\
    &&&\vdots &&\\
   &&&\vdash &&\theta^k_{S_1}\vee\ldots \vee \theta^k_{S_n}.
\end{align*}
\leqnomode
\end{proof}

\begin{theorem}[Completeness]\label{Completeness}
If $\Gamma\models\psi$, then $\Gamma\vdash\psi$.
\end{theorem}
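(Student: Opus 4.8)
The plan is to use compactness (noted after Theorem~\ref{theorem:expressive_completeness_might} as following from the first-order translation in the Appendix) to reduce to the case of a finite set of premises, then normalize everything. Suppose $\Gamma\models\psi$. By compactness there is a finite $\Gamma_0\subseteq\Gamma$ with $\Gamma_0\models\psi$; since $\vdash$ is monotone it suffices to show $\Gamma_0\vdash\psi$, so I may assume $\Gamma=\{\gamma_1,\dots,\gamma_n\}$ is finite. Then $\gamma_1,\dots,\gamma_n\models\psi$ iff $\gamma_1\land\dots\land\gamma_n\models\psi$, and by $\land$I and $\land$E it suffices to show $\gamma_1\land\dots\land\gamma_n\vdash\psi$. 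So the whole theorem reduces to the single-formula claim: if $\phi\models\psi$ then $\phi\vdash\psi$ (and the case $\Gamma=\emptyset$ is handled by using $\top$, or a valid formula, as the antecedent).

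Next I normalize both sides using Lemma~\ref{nfproveeq}. Pick $k\geq\max\{md(\phi),md(\psi)\}$. By Lemma~\ref{nfproveeq} there are finite nonempty properties $\mathcal{C},\mathcal{D}$ over $\mathsf{X}$ (where $\mathsf{X}$ contains all propositional symbols occurring in $\phi$ and $\psi$) with $\phi\dashv\vdash\bigvee_{(M,T)\in\mathcal{C}}\theta^k_{M,T}$ and $\psi\dashv\vdash\bigvee_{(M',S)\in\mathcal{D}}\theta^k_{M',S}$. Since these are provable equivalences, by soundness (Theorem~\ref{soundness}) they are also semantic equivalences, so from $\phi\models\psi$ we get $\bigvee_{(M,T)\in\mathcal{C}}\theta^k_{M,T}\models\bigvee_{(M',S)\in\mathcal{D}}\theta^k_{M',S}$, and it suffices to show $\bigvee_{(M,T)\in\mathcal{C}}\theta^k_{M,T}\vdash\bigvee_{(M',S)\in\mathcal{D}}\theta^k_{M',S}$.

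Now apply the semantic characterization of entailment between normal forms, Corollary~\ref{coro:disjoint_union_entailment}: the entailment above gives, for each $(M'',U)\in\mathcal{C}$, a subfamily $\mathcal{D}_{(M'',U)}\subseteq\mathcal{D}$ with $M'',U\leftrightarroweq_k\biguplus\mathcal{D}_{(M'',U)}$. To derive the conclusion from the disjunction on the left, I use $\lor$E: it suffices to show $\theta^k_{M'',U}\vdash\bigvee_{(M',S)\in\mathcal{D}}\theta^k_{M',S}$ for each $(M'',U)\in\mathcal{C}$. Fix such $(M'',U)$. Since $M'',U\leftrightarroweq_k\biguplus\mathcal{D}_{(M'',U)}$, Lemma~\ref{lemma:team_hintikka_provable_equivalence} gives $\theta^k_{M'',U}\dashv\vdash\theta^k_{\biguplus\mathcal{D}_{(M'',U)}}$. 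Then Lemma~\ref{lemma:disjoint_union_provability} gives $\theta^k_{\biguplus\mathcal{D}_{(M'',U)}}\vdash\bigvee_{(M',S)\in\mathcal{D}_{(M'',U)}}\theta^k_{M',S}$, and finally $\lor$I (weakening the disjunction) gives $\bigvee_{(M',S)\in\mathcal{D}_{(M'',U)}}\theta^k_{M',S}\vdash\bigvee_{(M',S)\in\mathcal{D}}\theta^k_{M',S}$. Chaining these yields $\theta^k_{M'',U}\vdash\bigvee_{(M',S)\in\mathcal{D}}\theta^k_{M',S}$ as required, completing the proof.

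The main obstacle is entirely in the deferred Lemma~\ref{nfproveeq} (provable equivalence of the normal form): once that is in hand, everything above is bookkeeping plus the already-established Corollary~\ref{coro:disjoint_union_entailment}, Lemma~\ref{lemma:team_hintikka_provable_equivalence}, and Lemma~\ref{lemma:disjoint_union_provability}. Within the present theorem the only subtlety worth flagging is the appeal to compactness: one must be sure the relevant notion of compactness is compactness \emph{for entailment} (not just satisfiability), which the excerpt explicitly asserts holds for these logics; and one must handle the degenerate cases $\Gamma=\emptyset$ and empty disjunctions/properties cleanly, which the conventions fixed before Lemma~\ref{lemma:nf_disjoint_union} (e.g.\ $\biguplus\emptyset=(M^*,\emptyset)$, $\bigvee\emptyset=\bot$) make routine.
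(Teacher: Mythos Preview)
Your proposal is correct and follows essentially the same route as the paper's proof: compactness to reduce to a single formula, Lemma~\ref{nfproveeq} to pass to normal forms, soundness plus Corollary~\ref{coro:disjoint_union_entailment} to extract the bisimulation data, and then Lemmas~\ref{lemma:team_hintikka_provable_equivalence} and~\ref{lemma:disjoint_union_provability} with $\lor$I/$\lor$E to finish. Your explicit attention to entailment-compactness and the degenerate cases is well placed and matches the paper's own caveats.
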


\begin{proof}
 Suppose that $\Gamma\models\psi$. Since $\ML(\subseteq)$ is compact, there is a finite subset $\Gamma_0\subseteq\Gamma$ such that $\Gamma_0\models\psi$. It suffices to show that $\phi\vdash\psi$ where $\phi=\bigwedge_{\gamma\in\Gamma_0}\gamma$.
 
Let $k\geq$max$\{ md(\phi),$ $md(\psi)\}$. By Lemma \ref{nfproveeq},
\begin{equation*}
    \phi\dashv\vdash\bigvee_{(M,T)\in\mathcal{C}}\theta^k_{M,T} \text{ and } \psi\dashv\vdash\bigvee_{(M^\prime,S)\in\mathcal{D}}\theta^k_{M^\prime,S}.
\end{equation*}
 for some finite nonempty properties $\mathcal{C}$ and $\mathcal{D}$. By soundness and $\phi\models \psi$, 
$$\bigvee_{(M,T)\in\mathcal{C}}\theta^k_{M,T}\models\bigvee_{(M^\prime,S)\in\mathcal{D}}\theta^k_{M^\prime,S}.$$ 
Let $(M,T)\in \mathcal{C}$. By Corollary \ref{coro:disjoint_union_entailment}, $M,T\leftrightarroweq_k \biguplus{\mathcal{D}_T}$ for some $\mathcal{D}_T\subseteq\mathcal{D}$. By Lemma \ref{lemma:team_hintikka_provable_equivalence}, Lemma \ref{lemma:disjoint_union_provability}, and $\lor$I, $$\theta^k_{M,T}\vdash\theta^k_{\biguplus\mathcal{D}_T}\vdash\bigvee_{(M^\prime,S)\in\mathcal{D}_T}\theta^k_{M^\prime,S}\vdash\bigvee_{(M^\prime,S)\in\mathcal{D}}\theta^k_{M^\prime,S}.$$
By $\lor$E we get that $\bigvee_{(M,T)\in\mathcal{C}}\theta^k_{M,T}\vdash\bigvee_{(M^\prime,S)\in\mathcal{D}}\theta^k_{M^\prime,S}$, and conclude that $\phi\vdash\psi$.
\end{proof}

We dedicate the rest of this subsection to the proof of Lemma \ref{nfproveeq}: provable equivalence of the normal form. We first prove some technical lemmas. The rules $\vee_\subseteq$E and $\Box\vee_{\subseteq}$E can be generalized as follows:

\begin{lemma} 
\label{lemma:inclusion_disjunction_elimination_generalization}
\begin{enumerate}[label=(\roman*)]
   \item \label{lemma:inclusion_disjunction_elimination_generalization_non_modal_i} If
    \begin{enumerate}[label=(\alph*)]
        \item $\Gamma, \mathsf{x}_1\subseteq \mathsf{a}_1,\dots, \mathsf{x}_k\subseteq \mathsf{a}_k\vdash\chi$ and
        \item $\Gamma,\psi\vdash\chi$,
    \end{enumerate}
then $\Gamma,(\mathsf{x}_1\subseteq \mathsf{a}_1\land\dots\land \mathsf{x}_k\subseteq \mathsf{a}_k)\lor\psi\vdash\chi$.
\item \label{lemma:inclusion_disjunction_elimination_generalization_non_modal} Let $I$ be a finite index set and for each $i\in I$, let $\iota_i$ be a conjunction of finitely many primitive inclusion atoms. If for every nonempty $J \subseteq I$,
    \begin{equation*}
         \quad \Gamma,\bigvee_{j\in J}\phi_j,\bigwedge_{j\in J}\iota_j\vdash\chi, 
    \end{equation*}
then $\Gamma,\bigvee_{i\in I}(\phi_i\land\iota_i)\vdash\chi.$
   \item \label{lemma:inclusion_disjunction_elimination_generalization_modal_i} If
    \begin{enumerate}[label=(\alph*)]
        \item $\Gamma, \top\subseteq \Diamond\mathsf{a}_1,\dots, \top\subseteq \Diamond\mathsf{a}_k\vdash\chi$ and
        \item $\Gamma,\Box\psi\vdash\chi$,
    \end{enumerate}
then $\Gamma,\Box((\mathsf{x}_1\subseteq \mathsf{a}_1\land\dots\land \mathsf{x}_k\subseteq \mathsf{a}_k)\lor\psi)\vdash\chi$.
\item \label{lemma:inclusion_disjunction_elimination_generalization_modal} Let $I$ be a finite index set and for each $i\in I$, let $\iota_i$ be a conjunction of finitely many primitive inclusion atoms. For $\iota_i=\bigwedge_{k\in K_i} (\mathsf{x}_k\subseteq\mathsf{a}_k)$, define $\iota_{\Diamond i}=\bigwedge_{k\in K_i} (\top\subseteq\Diamond\mathsf{a}_k^{\mathsf{x}_k})$. If for every nonempty $J \subseteq I$
    \begin{equation*}  
        \Gamma,\Box\bigvee_{j\in J}\phi_j,\bigwedge_{j\in J}\iota_{\Diamond j}\vdash\chi,
      \end{equation*}
then $\Gamma,\Box\bigvee_{i\in I}(\phi_i\land\iota_i)\vdash\chi.$
\end{enumerate}
\end{lemma}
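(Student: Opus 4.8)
The plan is to prove the four clauses in two parallel pairs: the ``non-modal'' clauses (i)--(ii) and their ``boxed'' counterparts (iii)--(iv). Clauses (i) and (iii) are proved by induction on $k$, with base cases $\vee_\subseteq$E and $\Box\vee_\subseteq$E respectively; clauses (ii) and (iv) are then proved by induction on the total number of primitive inclusion atoms occurring in the $\iota_i$, using (i) and (iii) in the inductive step. One elementary derivability fact gets used throughout: for arbitrary $\phi,\theta,\psi$ (classical or not), $(\phi\wedge\theta)\vee\psi\vdash\theta\vee\psi$ and $(\phi\wedge\theta)\vee\psi\vdash\phi\vee\psi$. Each follows by applying $\vee$E to $(\phi\wedge\theta)\vee\psi$ with the subderivations $[\phi\wedge\theta]\vdash\theta\vee\psi$ (by $\wedge$E and $\vee$I) and $[\psi]\vdash\theta\vee\psi$ (by $\vee$I); since those subderivations have no undischarged assumptions, the side condition on $\vee$E holds vacuously even when the fact is invoked inside a larger derivation with ambient hypotheses. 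Running these through $\Box$Mon (whose ``no undischarged assumptions'' condition is met for the same reason) gives $\Box((\phi\wedge\theta)\vee\psi)\vdash\Box(\theta\vee\psi)$ and $\Box((\phi\wedge\theta)\vee\psi)\vdash\Box(\phi\vee\psi)$, which (iii) and (iv) need.

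For (i), the case $k\le 1$ is immediate ($k=1$ is exactly $\vee_\subseteq$E). For the inductive step, set $\iota':=(\mathsf{x}_1\subseteq\mathsf{a}_1)\wedge\dots\wedge(\mathsf{x}_k\subseteq\mathsf{a}_k)$; from the hypothesis $(\iota'\wedge(\mathsf{x}_{k+1}\subseteq\mathsf{a}_{k+1}))\vee\psi$ derive, by the elementary fact, both $(\mathsf{x}_{k+1}\subseteq\mathsf{a}_{k+1})\vee\psi$ and $\iota'\vee\psi$. Apply $\vee_\subseteq$E to $(\mathsf{x}_{k+1}\subseteq\mathsf{a}_{k+1})\vee\psi$: the $[\psi]$-branch closes by the second given premise; in the $[\mathsf{x}_{k+1}\subseteq\mathsf{a}_{k+1}]$-branch, invoke the induction hypothesis with ambient set $\Gamma\cup\{\mathsf{x}_{k+1}\subseteq\mathsf{a}_{k+1}\}$ --- whose two premises are the given $(k+1)$-atom premise (reordered) and the given second premise (weakened by $\mathsf{x}_{k+1}\subseteq\mathsf{a}_{k+1}$) --- to obtain $\Gamma,\mathsf{x}_{k+1}\subseteq\mathsf{a}_{k+1},\iota'\vee\psi\vdash\chi$, and close it with the $\iota'\vee\psi$ derived a moment ago. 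Clause (iii) is the box-analogue step for step: base case $\Box\vee_\subseteq$E; in the step, carry the two weakenings under the box by $\Box$Mon and apply $\Box\vee_\subseteq$E, whose branches supply the relevant diamond-atom from premise (a) and $\Box\psi$.

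For (ii), induct on $N$, the total number of primitive inclusion atoms across the $\iota_i$. If $N=0$, every $\iota_i$ is an empty conjunction, hence $\dashv\vdash\top$, so $\bigvee_{i\in I}(\phi_i\wedge\iota_i)\vdash\bigvee_{i\in I}\phi_i$, and the $J=I$ instance of the hypothesis of (ii) closes the goal. If $N>0$, fix $m$ with $\iota_m$ containing an atom, and split off disjunct $m$: the hypothesis is $(\phi_m\wedge\iota_m)\vee\rho$ with $\rho:=\bigvee_{i\in I\setminus\{m\}}(\phi_i\wedge\iota_i)$. Derive $\iota_m\vee\rho$ from it and apply clause (i) with $\psi:=\rho$, the conjuncts of $\iota_m$ as the atoms, and ambient set $\Gamma\cup\{(\phi_m\wedge\iota_m)\vee\rho\}$; since $(\phi_m\wedge\iota_m)\vee\rho$ yields $\iota_m\vee\rho$, this reduces the goal to two subgoals. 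The first, $\Gamma,(\phi_m\wedge\iota_m)\vee\rho,\rho\vdash\chi$, is closed by the induction hypothesis applied to $\rho$ (indexed by $I\setminus\{m\}$, with strictly fewer atoms, and with the required premises being the instances of the hypothesis of (ii) for nonempty $J\subseteq I\setminus\{m\}$). For the second, $\Gamma,(\phi_m\wedge\iota_m)\vee\rho,\iota_m\vdash\chi$, derive $\phi_m\vee\rho$ from $(\phi_m\wedge\iota_m)\vee\rho$ and note $\phi_m\vee\rho\dashv\vdash\bigvee_{i\in I}(\phi_i\wedge\kappa_i)$ with $\kappa_m:=\top$ and $\kappa_i:=\iota_i$ otherwise; this disjunction has strictly fewer atoms (we dropped the conjuncts of $\iota_m$), so the induction hypothesis applies to it with ambient set $\Gamma\cup\{\iota_m\}$, and its required premises $\Gamma,\iota_m,\bigvee_{j\in J}\phi_j,\bigwedge_{j\in J}\kappa_j\vdash\chi$ are obtained from the original premises of (ii): verbatim plus weakening by $\iota_m$ when $m\notin J$, and when $m\in J$ by using the (now global) $\iota_m$ together with $\wedge$I to rebuild $\bigwedge_{j\in J}\iota_j$ from the available $\bigwedge_{j\in J\setminus\{m\}}\iota_j$. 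Clause (iv) is the box-version, step for step: induct on the total atom count, peel off a disjunct under $\Box$, carry the weakenings inside $\Box$ by $\Box$Mon, use (iii) in place of (i) so that the split produces $\iota_{\Diamond m}$ and $\Box\rho$, close the $\Box\rho$-branch by the induction hypothesis, and close the $\iota_{\Diamond m}$-branch by the same re-indexing $\kappa_m:=\top$, reconstructing $\bigwedge_{j\in J}\iota_{\Diamond j}$ from the global $\iota_{\Diamond m}$.

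The step I expect to be the main obstacle is the second subgoal in (ii) (and its analogue in (iv)). Because the $\phi_i$ need not be classical --- or even downward closed --- one cannot apply $\vee$E to the disjunct $(\phi_m\wedge\iota_m)\vee\rho$ to case on whether its $\phi_m\wedge\iota_m$-part is empty, and that part need not satisfy $\iota_m$ since primitive inclusion atoms fail downward closure. The device that makes the induction go through is the one used above: first weaken the awkward disjunct to the purely inclusion formula $\iota_m\vee\rho$ (legitimate, with no side condition), let (i) or (iii) perform the case split on the inclusion content, and only in the branch where $\iota_m$ (resp.\ $\iota_{\Diamond m}$) has become a global assumption reintroduce the original disjunction --- at that point the copy of $\iota_m$ inside disjunct $m$ is redundant and is discarded by $\wedge$E, which is precisely what lowers the atom count and lets the induction hypothesis fire. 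Lining up the induction-hypothesis premises in that branch --- in particular reassembling $\bigwedge_{j\in J}\iota_j$ (resp.\ $\bigwedge_{j\in J}\iota_{\Diamond j}$) for the sets $J$ that contain $m$ --- is where the bookkeeping has to be done carefully.
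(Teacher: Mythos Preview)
Your proof is correct and follows essentially the same route as the paper. For (i) and (iii) the arguments are identical (induction on $k$, peeling off one atom at a time via $\vee_\subseteq$E resp.\ $\Box\vee_\subseteq$E). For (ii) and (iv) the paper presents the argument as an iteration over the index set (processing index $n$, then $n-1$, etc., and explicitly unfolding the $2^{|I|}-1$ branches), whereas you package the same recursion as a strong induction on the total atom count; the key step---splitting off a disjunct, weakening to isolate its inclusion part, applying (i)/(iii), and in the ``atoms-become-global'' branch replacing that disjunct's $\iota_m$ by $\top$---is identical in both.
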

\begin{proof}
We prove \ref{lemma:inclusion_disjunction_elimination_generalization_non_modal_i} and \ref{lemma:inclusion_disjunction_elimination_generalization_non_modal}; the proofs of \ref{lemma:inclusion_disjunction_elimination_generalization_modal_i} and \ref{lemma:inclusion_disjunction_elimination_generalization_modal} are similar.
\begin{enumerate}[label=(\roman*)]
    \item We have that $(\mathsf{x}_1\subseteq\mathsf{a}_1\land\dots\land \mathsf{x}_k\subseteq \mathsf{a}_k)\lor\psi\vdash ((\mathsf{x}_1\subseteq \mathsf{a}_1\land\dots\land \mathsf{x}_{k-1}\subseteq \mathsf{a}_{k-1}) \vee\psi)\land (\mathsf{x}_k\subseteq \mathsf{a}_k\vee\psi)$ by $\vee$E, $\vee$I, $\land $E, and $\land$I. To show $\Gamma,(\mathsf{x}_1\subseteq \mathsf{a}_1\land\dots\land \mathsf{x}_{k-1}\subseteq \mathsf{a}_{k-1}) \vee\psi, \mathsf{x}_k\subseteq \mathsf{a}_k\vee \psi \vdash\chi$, by $\vee_\subseteq $E it suffices to show:
    \begin{enumerate}[label=(\alph*)]
        \item $\Gamma,(\mathsf{x}_1\subseteq \mathsf{a}_1\land\dots\land \mathsf{x}_{k-1}\subseteq \mathsf{a}_{k-1}) \vee\psi,\psi \vdash \chi$ and
        \item $\Gamma,(\mathsf{x}_1\subseteq \mathsf{a}_1\land\dots\land \mathsf{x}_{k-1}\subseteq \mathsf{a}_{k-1})\vee\psi,\mathsf{x}_k\subseteq \mathsf{a}_k\vdash\chi$.
    \end{enumerate}
    Showing $\Gamma,\psi \vdash \chi$ suffices for (a). As for (b), similarly to the above, we have $((\mathsf{x}_1\subseteq \mathsf{a}_1\land\dots\land \mathsf{x}_{k-1}\subseteq \mathsf{a}_{k-1})\vee\psi )\land \mathsf{x}_k\subseteq \mathsf{a}_k\vdash((\mathsf{x}_1\subseteq \mathsf{a}_1\land\dots\land \mathsf{x}_{k-2}\subseteq \mathsf{a}_{k-2})\vee\psi)\land \mathsf{x}_{k}\subseteq \mathsf{a}_k \land(\mathsf{x}_{k-1}\subseteq \mathsf{a}_{k-1}\vee\psi)$. To show (b), it therefore suffices, by $\vee_\subseteq$E, to show:
        \begin{enumerate}[label=(\alph*)] \setcounter{enumii}{2}
        \item $\Gamma,(\mathsf{x}_1\subseteq \mathsf{a}_1\land\dots\land \mathsf{x}_{k-2}\subseteq \mathsf{a}_{k-2})\vee\psi,\mathsf{x}_{k}\subseteq \mathsf{a}_k,\psi \vdash\chi$ and
        \item $\Gamma,(\mathsf{x}_1\subseteq \mathsf{a}_1\land\dots\land \mathsf{x}_{k-2}\subseteq \mathsf{a}_{k-2})\vee\psi,\mathsf{x}_{k}\subseteq \mathsf{a}_k,\mathsf{x}_{k-1}\subseteq \mathsf{a}_{k-1}\vdash\chi$.
    \end{enumerate}
    Showing $\Gamma,\psi \vdash \chi$ would again suffice for (c). Continuing in the same manner, one eventually finds that it suffices to show $\Gamma,\psi \vdash \chi$ and $\Gamma, \mathsf{x}_1\subseteq \mathsf{a}_1,\dots, \mathsf{x}_k\subseteq \mathsf{a}_k\vdash\chi$.

  \item Let $I=\{1,\ldots, n\}$. We have that $\bigvee_{1\leq i\leq n-1}(\phi_i\land \iota_i)\vdash (\bigvee_{1\leq i\leq n-1}(\phi_i\land \iota_i)\vee\phi_n) \land (\bigvee_{1\leq i\leq n-1}(\phi_i\land \iota_i)\vee\iota_n)$ by $\vee$E, $\vee$I, $\land $E, and $\land$I. To show $\Gamma,\bigvee_{1\leq i\leq n-1}(\phi_i\land \iota_i)\vee\phi_n, \bigvee_{1\leq i\leq n-1}(\phi_i\land \iota_i)\vee\iota_n\vdash\chi$, by \ref{lemma:inclusion_disjunction_elimination_generalization_non_modal_i} it suffices to show:
  \begin{enumerate}[label=(\alph*)]
      \item $\Gamma,\bigvee_{1\leq i\leq n-1}(\phi_i\land \iota_i)\vee\phi_n, \bigvee_{1\leq i\leq n-1}(\phi_i\land \iota_i)\vdash\chi$ and
      \item $\Gamma,\bigvee_{1\leq i\leq n-1}(\phi_i\land \iota_i)\vee\phi_n, \iota_n\vdash\chi$.
  \end{enumerate}
  By $\vee$I, showing $\Gamma,\bigvee_{1\leq i\leq n-1}(\phi_i\land \iota_i)\vdash\chi$ suffices for (a). Furthermore, by a similar manipulation as used above, (c) suffices for (a), and (d) suffices for (b).
    \begin{enumerate}[label=(\alph*)] \setcounter{enumii}{2}
        \item $\Gamma,\bigvee_{1\leq i\leq n-2}(\phi_i\land \iota_i)\vee\phi_{n-1},\bigvee_{1\leq i\leq n-2}(\phi_i\land \iota_i)\vee\iota_{n-1}\vdash\chi$.
        \item $\Gamma,\bigvee_{1\leq i\leq n-2}(\phi_i\land \iota_i)\vee \phi_{n-1}\vee\phi_n,\bigvee_{1\leq i\leq n-2}(\phi_i\land \iota_i)\vee\phi_n\vee \iota_{n-1}, \iota_n\vdash\chi$.
    \end{enumerate}
    By \ref{lemma:inclusion_disjunction_elimination_generalization_non_modal_i}, to show (c) it suffices to show (e) and (f), and to show (d) it suffices to show (g) and (h):
        \begin{enumerate}[label=(\alph*)] \setcounter{enumii}{4}
        \item $\Gamma,\bigvee_{1\leq i\leq n-2}(\phi_i\land \iota_i)\vee\phi_{n-1},\bigvee_{1\leq i\leq n-2}(\phi_i\land \iota_i)\vdash\chi$.
        \item $\Gamma,\bigvee_{1\leq i\leq n-2}(\phi_i\land \iota_i)\vee\phi_{n-1},\iota_{n-1}\vdash\chi$.
        \item $\Gamma,\bigvee_{1\leq i\leq n-2}(\phi_i\land \iota_i)\vee \phi_{n-1}\vee\phi_n,\bigvee_{1\leq i\leq n-2}(\phi_i\land \iota_i)\vee\phi_n, \iota_n\vdash\chi$.
        \item $\Gamma,\bigvee_{1\leq i\leq n-2}(\phi_i\land \iota_i)\vee \phi_{n-1}\vee\phi_n, \iota_{n-1}, \iota_n\vdash\chi$.
    \end{enumerate}
    Showing $\Gamma,\bigvee_{1\leq i\leq n-2}(\phi_i\land \iota_i)\vdash\chi$ would suffice to show (e), and showing $\Gamma,\bigvee_{1\leq i\leq n-2}(\phi_i\land \iota_i)\vee\phi_n, \iota_n\vdash\chi$ would suffice to show (g). Continuing in the same manner, one eventually finds that it suffices to show $\Gamma,\bigvee_{j\in J}\phi_j,\bigwedge_{j\in J}\iota_j\vdash\chi$ for every nonempty $J\subseteq I$. \qedhere
\end{enumerate}
\end{proof}

Next, we show that the Hintikka formulas of non-bisimilar pointed models/models with teams are contradictory in our proof system.

\begin{lemma}\label{lemma:non_bisimilar_hintikka_formulas_contradictory}\
\begin{enumerate}[label=(\roman*)]
    \item \label{lemma:non_bisimilar_hintikka_formulas_contradictory_world} If $M,w\not\leftrightarroweq_k M^\prime,u$, then $\chi_{M,w}^k,\chi_{M^\prime,u}^k\vdash\bot$.
    \item \label{lemma:non_bisimilar_hintikka_formulas_contradictory_team} If $M,T\not\leftrightarroweq_k M^\prime,S$, then $\theta_{M,T}^k,\theta_{M^\prime,S}^k\vdash\bot$.
\end{enumerate}
\end{lemma}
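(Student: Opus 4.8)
The plan is to establish (i) semantically via classical completeness, and then reduce (ii) to (i) by manipulating the conjuncts of the Hintikka formulas for teams. For (i): suppose $M,w\not\leftrightarroweq_k M^\prime,u$. I claim no nonempty team satisfies $\chi^k_{M,w}\land\chi^k_{M^\prime,u}$. Indeed, if $v$ belonged to such a team, then since Hintikka formulas are classical and hence flat we would have $\{v\}\models\chi^k_{M,w}$ and $\{v\}\models\chi^k_{M^\prime,u}$, so by Theorem \ref{thm:world_bisimulation} both $v\leftrightarroweq_k w$ and $v\leftrightarroweq_k u$; since $k$-bisimulation is an equivalence relation this forces $w\leftrightarroweq_k u$, contradicting the hypothesis. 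Hence $\chi^k_{M,w},\chi^k_{M^\prime,u}\models\bot$, and since all three formulas are classical, Proposition \ref{prop:classical_completeness} gives $\chi^k_{M,w},\chi^k_{M^\prime,u}\vdash\bot$. (Equivalently one may first obtain $\chi^k_{M^\prime,u}\vdash\neg\chi^k_{M,w}$ by classical completeness and then apply $\neg$E.)

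For (ii), I would first dispose of the degenerate cases: if $T=\emptyset$ then $\eta^k_{M,T}=\bigvee\emptyset=\bot$, so $\theta^k_{M,T}\vdash\bot$ by $\land$E, and symmetrically if $S=\emptyset$. So assume $T,S\neq\emptyset$; then $M,T\not\leftrightarroweq_k M^\prime,S$ means the forth or the back condition of Definition \ref{teamkbisimdef} fails. In the forth case there is a $w\in T$ with $w\not\leftrightarroweq_k u$ for every $u\in S$ (the back case is handled symmetrically by interchanging $(M,T)$ and $(M^\prime,S)$). From $\theta^k_{M,T}$ we extract the conjunct $\top\subseteq\chi^k_{M,w}$ of $\zeta^k_{M,T}$ by $\land$E, and from $\theta^k_{M^\prime,S}$ we extract $\eta^k_{M^\prime,S}=\bigvee_{u\in S}\chi^k_{M^\prime,u}$ by $\land$E. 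For each $u\in S$, part (i) gives $\chi^k_{M,w},\chi^k_{M^\prime,u}\vdash\bot$, so by $\neg$I (discharging the $\ML$-formula $\chi^k_{M,w}$, the remaining open assumption $\chi^k_{M^\prime,u}$ also being an $\ML$-formula) we get $\chi^k_{M^\prime,u}\vdash\neg\chi^k_{M,w}$; then $\lor$E over $\bigvee_{u\in S}\chi^k_{M^\prime,u}$ yields $\bigvee_{u\in S}\chi^k_{M^\prime,u}\vdash\neg\chi^k_{M,w}$. Finally $\subseteq_{\neg}$E applied to $\neg\chi^k_{M,w}$ and $\top\subseteq\chi^k_{M,w}$ (taking $\mathsf{x}=\top$ and $\mathsf{a}=\chi^k_{M,w}$, so that $\mathsf{a}^\mathsf{x}=\chi^k_{M,w}$) produces $\bot$. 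Chaining these derivations gives $\theta^k_{M,T},\theta^k_{M^\prime,S}\vdash\bot$.

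The only real subtlety is the one just flagged in the forth case: one cannot apply $\lor$E to $\bigvee_{u\in S}\chi^k_{M^\prime,u}$ while keeping the non-classical assumption $\top\subseteq\chi^k_{M,w}$ open, because of the side condition on $\lor$E (and $\neg$I) that the undischarged assumptions in the subderivations be $\ML$-formulas; this is why the argument is routed through the purely classical consequence $\neg\chi^k_{M,w}$, bringing in the inclusion atom only afterwards via $\subseteq_{\neg}$E. Besides this, and remembering to treat the empty-team cases separately, everything is routine bookkeeping with $\land$E, $\lor$E, $\neg$I, and $\subseteq_{\neg}$E, plus the already-proved part (i).
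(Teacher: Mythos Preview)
Your proof is correct and follows essentially the same approach as the paper's: (i) is handled semantically via flatness, Theorem \ref{thm:world_bisimulation}, and classical completeness; (ii) extracts $\top\subseteq\chi^k_{M,w}$ from $\theta^k_{M,T}$ and $\bigvee_{u\in S}\chi^k_{M^\prime,u}$ from $\theta^k_{M^\prime,S}$, uses (i) and $\lor$E/$\neg$I on classical formulas to obtain $\neg\chi^k_{M,w}$, and finishes with $\subseteq_\neg$E. The only differences are cosmetic: the paper applies $\lor$E before $\neg$I (keeping the classical $\chi^k_{M,w}$ open across the $\lor$E, which is permitted), whereas you apply $\neg$I first in each branch; and you treat the empty-team cases explicitly, while the paper absorbs them into its ``w.l.o.g.''
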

\begin{proof}
\begin{enumerate}[label=(\roman*)]
    \item Assume for contradiction that $\chi_{w}^k,\chi_{u}^k\not\vdash\bot$. By Proposition \ref{prop:classical_completeness}, there is some model $M^{\prime\prime}$ and a nonempty team $T$ of $M''$ such that $M^{\prime\prime},T\models\chi_{w}^k$ and $M^{\prime\prime},T\models\chi_{u}^k$. By flatness, $M^{\prime\prime},v\models\chi_{w}^k$ and $M^{\prime\prime},v\models\chi_{u}^k$ for all $v\in T$, from which it follows by Theorem \ref{thm:world_bisimulation} that $M,w\leftrightarroweq_k M^{\prime\prime}, v \leftrightarroweq_k M^\prime,u$, a contradiction.  
    \item W.l.o.g., we assume that 
    there is some $w\in T$ such that $M,w\not\leftrightarroweq_k M^\prime,u$ for all $u\in S$. By item \ref{lemma:non_bisimilar_hintikka_formulas_contradictory_world}, $\chi_{w}^k,\chi_{u}^k\vdash\bot$ for all $u\in S$, whence by $\lor$E, $\bigvee_{u\in S}\chi_{u}^k,\chi_{w}^k\vdash\bot$. By $\neg$I, we derive $\bigvee_{u\in S}\chi_{u}^k\vdash\neg\chi_{w}^k$. Then by $\land$E, we have $\theta_{S}^k\vdash\bigvee_{u\in S}\chi_{u}^k\vdash\neg\chi_{w}^k$. We also derive $\theta_{T}^k\vdash\top\subseteq\chi_{w}^k$ by $\land$E. Finally, we use $\subseteq_\neg$E to derive $\top\subseteq\chi_{w}^k,\neg\chi_{w}^k\vdash\bot$.\qedhere
\end{enumerate}
\end{proof}

Finally, we note the following simple consequence of classical completeness:
\begin{lemma}\label{lemma:world_satisfaction_to_hintikka_provability}
If $M,w\models\alpha$, then $\chi_{M,w}^k\vdash\alpha$, where $k\geq  md(\alpha)$.
\end{lemma}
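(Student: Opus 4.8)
The plan is to reduce the claim to classical completeness (Proposition \ref{prop:classical_completeness}). Both $\alpha$ and the Hintikka formula $\chi^k_{M,w}$ are classical $\ML$-formulas, so $\vdash$ here is just the classical fragment of the system, and it suffices to establish the semantic entailment $\chi^k_{M,w}\models \alpha$; the proof-theoretic conclusion $\chi^k_{M,w}\vdash \alpha$ then follows immediately by Proposition \ref{prop:classical_completeness} (together with Proposition \ref{prop:semantics-state-world}, which lets us move freely between the team-based and single-world satisfaction relations for classical formulas).

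To show $\chi^k_{M,w}\models \alpha$, I would take an arbitrary pointed model $(M',u)$ with $M',u\models \chi^k_{M,w}$. By Theorem \ref{thm:world_bisimulation}, the fact that $u$ satisfies the $k$th Hintikka formula of $(M,w)$ yields $M,w\leftrightarroweq_k M',u$, and hence $M,w\equiv_k M',u$, i.e. $(M,w)$ and $(M',u)$ satisfy exactly the same $\ML$-formulas of modal depth at most $k$. Since $md(\alpha)\leq k$ and $M,w\models \alpha$ by hypothesis, we conclude $M',u\models \alpha$. As $(M',u)$ was arbitrary, $\chi^k_{M,w}\models \alpha$, and we are done.

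The argument is essentially immediate once the pieces are lined up, so there is no real obstacle. The only point requiring a little care is the bookkeeping around the propositional vocabulary: $\chi^k_{M,w}$ is formed over the fixed finite set $\mathsf{X}$, so the statement is to be read with the tacit assumption that $\alpha$ is also a formula over $\mathsf{X}$ (otherwise $\chi^k_{M,w}$ could not entail $\alpha$). With that understood, the content is wholly contained in Theorem \ref{thm:world_bisimulation} and classical completeness.
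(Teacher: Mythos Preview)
Your proof is correct and follows essentially the same approach as the paper: reduce to the semantic entailment $\chi^k_{M,w}\models\alpha$ via classical completeness, then use Theorem \ref{thm:world_bisimulation} to conclude that any point satisfying $\chi^k_{M,w}$ is $k$-equivalent to $w$ and hence satisfies $\alpha$. The paper phrases the semantic argument at the level of teams and invokes flatness to pass to single worlds, whereas you work directly with pointed models; this is a cosmetic difference.
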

\begin{proof}
By Proposition \ref{prop:classical_completeness}, it suffices to show $\chi_w^k\models\alpha$. Let $T\models\chi_w^k$ so that $u\models\chi_w^k$ for all $u\in T$ by flatness. By Theorem \ref{thm:world_bisimulation}, it follows that $w\equiv_k u$ whence also $u\models\alpha$ for all $u\in T$. Using flatness again, we conclude $T\models\alpha$.
\end{proof}

We are now ready to prove the main lemma.


\begin{proof}[Proof of Lemma \ref{nfproveeq}]
By induction on $\phi$.
\begin{enumerate} 


\item[\boldmath$\cdot$] If $\phi=\alpha \in \ML$ and $k\geq md(\alpha)$, letting $\mathcal{D}:={\{(M,w)\mid w\models \alpha\}}$, by a standard normal form result for $\ML$ (see, e.g., \cite{goranko}), we have that $\alpha\equiv\bigvee_{(M,w)\in \mathcal{D}}\chi_{w}^k$. Then by Proposition \ref{prop:classical_completeness}, $\alpha\dashv \vdash\bigvee_{(M,w)\in \mathcal{D}}\chi_{w}^k$. If $\mathcal{D}=\emptyset$, we have $\alpha\dashv \vdash\bigvee_{(M,w)\in \mathcal{D}}\chi_{w}^k = \bot = \theta^k_{(M^*,\emptyset)}$. Otherwise, we show that $\bigvee_{(M,w)\in\mathcal{D}}\chi_{w}^k\dashv\vdash\bigvee_{(M,w)\in\mathcal{D}}\theta_{\{w\}}^k=\bigvee_{(M,w)\in\mathcal{D}}(\chi_{w}^k\land\top\subseteq\chi_{w}^k)$. The direction $\dashv$ is easy. For the other direction $\vdash$, we derive $\chi_{w}^k\vdash\chi_{w}^k\land \top\subseteq\chi_{w}^k$ by Lemma \ref{lem:top_atom_derivability_results} \ref{lem:top_atom_derivability_results_i}; the result then follows by $\lor$E and $\lor$I.

\item[\boldmath$\cdot$]Let $\phi=\psi_1\lor\psi_2$ and let $k\geq md(\phi)$. Then $k\geq md(\psi_1), md(\psi_2)$, so by the induction hypothesis there are nonempty $\mathcal{C},\mathcal{D}$ such that $\psi_1\dashv\vdash\bigvee_{(M,T)\in \mathcal{C}}\theta_{T}^k\quad\text{and}\quad \psi_2\dashv\vdash\bigvee_{(M^\prime,S)\in \mathcal{D}}\theta_{S}^k$. Clearly $\psi_1\vee\psi_2 \dashv \vdash \bigvee_{(M'',U)\in \mathcal{C}\cup \mathcal{D}}\theta_{U}^k$.
 
\item[\boldmath$\cdot$]Let $\phi=\psi_1\land\psi_2$  and let $k\geq md(\phi)$. Then $k\geq md(\psi_1), md(\psi_2)$, so by the induction hypothesis there are nonempty $\mathcal{C},\mathcal{D}$ such that $\psi_1\dashv\vdash\bigvee_{(M,T)\in \mathcal{C}}\theta_{T}^k\quad\text{and}\quad \psi_2\dashv\vdash\bigvee_{(M^\prime,S)\in \mathcal{D}}\theta_{S}^k$. Let
$$\mathcal{Y}=\{\biguplus \mathcal{C}^\prime\mid \mathcal{C}^\prime\subseteq \mathcal{C} \text{ and } \biguplus \mathcal{C}^\prime \leftrightarroweq_k \biguplus \mathcal{D}^\prime, \text{ for some }\mathcal{D}^\prime\subseteq \mathcal{D}\},$$
that is, $\mathcal{Y}$ contains each disjoint union of (models with) teams in $\mathcal{C}$ which is $k$-bisimilar to some disjoint union of teams in $\mathcal{D}$. By union closure and bisimulation invariance, both $\psi_1$ and $\psi_2$ will hold in each team in $\mathcal{Y}$, and it is also easy to see that each team in which both $\psi_1$ and $\psi_2$ hold must be $k$-bisimilar to some team in $\mathcal{Y}$. We show $\psi_1\land\psi_2\dashv\vdash\bigvee_{(M'',X)\in \mathcal{Y}}\theta_X^k$.

($\vdash$) By Lemma \ref{lemma:inclusion_disjunction_elimination_generalization} \ref{lemma:inclusion_disjunction_elimination_generalization_non_modal} it suffices to show

\begin{align*}
&\bigvee_{(M,T)\in \mathcal{C^\prime}}\bigvee_{w\in T}\chi_{w}^k,\bigwedge_{(M,T)\in \mathcal{C^\prime}}\bigwedge_{w\in T}(\top\subseteq\chi_{w}^k),\bigvee_{(M^\prime,S)\in \mathcal{D^\prime}}\bigvee_{w\in S}\chi_{w}^k,\bigwedge_{(M^\prime,S)\in \mathcal{D^\prime}}\bigwedge_{w\in S}(\top\subseteq\chi_{w}^k)\\
&\vdash\bigvee_{(M'',X)\in \mathcal{Y}}\theta_X^k,
\end{align*}
for all nonempty $\mathcal{C^\prime}\subseteq\mathcal{C}$ and all nonempty $\mathcal{D^\prime}\subseteq\mathcal{D}$. This reduces to showing $\theta_{\biguplus \mathcal{C}^\prime},\theta_{\biguplus \mathcal{D}^\prime}\vdash\bigvee_{(M'',X)\in \mathcal{Y}}\theta_X^k$ for all nonempty $\mathcal{C^\prime}\subseteq\mathcal{C}$ and all nonempty $\mathcal{D^\prime}\subseteq\mathcal{D}$. For a given $\mathcal{C^\prime}$ and $\mathcal{D^\prime}$, if $\biguplus \mathcal{C^\prime} \not\leftrightarroweq_k \biguplus \mathcal{D^\prime}$, then $\theta_{\biguplus \mathcal{C^\prime}},\theta_{\biguplus \mathcal{D^\prime}}\vdash\bot\vdash\bigvee_{(M'',X)\in \mathcal{Y}}\theta_X^k$ by Lemma \ref{lemma:non_bisimilar_hintikka_formulas_contradictory} \ref{lemma:non_bisimilar_hintikka_formulas_contradictory_team}.
If $\biguplus \mathcal{C^\prime} \leftrightarroweq_k \biguplus \mathcal{D^\prime}$, then $\biguplus \mathcal{C^\prime}\in \mathcal{Y}$, whence $\theta_{\biguplus \mathcal{C^\prime}}\vdash\bigvee_{(M'',X)\in \mathcal{Y}}\theta_X^k$ by $\lor$I.

($\dashv$) Let $\biguplus C'\in \mathcal{Y}$. By Lemma \ref{lemma:disjoint_union_provability}, $\theta_{\biguplus \mathcal{C}^\prime}^k\vdash\bigvee_{(M,T)\in \mathcal{C}^\prime}\theta_{T}^k$, and by $\vee$I,  $\bigvee_{(M,T)\in \mathcal{C}^\prime}\theta_{T}^k\vdash\bigvee_{(M,T)\in \mathcal{C}}\theta_{T}^k\vdash\psi_1$. Similarly $\theta_{\biguplus \mathcal{C}'}^k\vdash \psi_2$, so by $\vee$E, $\bigvee_{\biguplus \mathcal{C}'\in \mathcal{Y}}\theta_{\biguplus \mathcal{C}'}^k\vdash \psi_1\land \psi_2$.

\item[\boldmath$\cdot$]Let $\phi=\mathsf{a}\subseteq\mathsf{b}$, where $\mathsf{a}=\alpha_1\dots\alpha_n$ and $\mathsf{b}=\beta_1\dots\beta_n$, and let $k\geq md(\phi)$. By $\subseteq$Rdt and $\subseteq$Ext,
\begin{align*}
    &\mathsf{a}\subseteq\mathsf{b} \dashv\vdash \bigwedge_{\mathsf{x}\in\{\top,\bot\}^{|\mathsf{a}|}}(\neg\mathsf{a}^\mathsf{x}\lor \mathsf{x}\subseteq\mathsf{b}).
    \end{align*}
Given the induction cases for $\ML$-formulas, conjunction and disjunction, it therefore suffices to show that each primitive inclusion atom is provably equivalent to a formula in the normal form. 
We show that $\mathsf{x}\subseteq\mathsf{b}\dashv\vdash\bigvee_{(M,T)\in\mathcal{Y}}\theta^k_{T}$, where
\begin{equation*}
   \mathcal{Y}=\{(M,T) \mid \exists w\in T\text{ such that }w\models\mathsf{b}^\mathsf{x}\}
\end{equation*}
Note that the provable equivalence we wish to show corresponds to the semantic fact in Proposition \ref{prop:inclfact}. 

($\vdash$) Let $\mathcal{M}:=\{(M',w)\mid w\models \top\}$. Then $\models\bigvee_{(M',w)\in\mathcal{M}}\chi_{w}^k$ so that $\vdash\bigvee_{(M',w)\in\mathcal{M}}\chi_{w}^k$ by Proposition \ref{prop:classical_completeness}. We have $\chi_{w}^k\vdash\bigvee_{(M',w)\in \mathcal{M}}(\chi_{w}^k\land\top\subseteq\chi_{w}^k)$ by Lemma \ref{lem:top_atom_derivability_results} \ref{lem:top_atom_derivability_results_i} and $\lor$I, so $\vdash\bigvee_{(M',w)\in \mathcal{M}}(\chi_{w}^k\land\top\subseteq\chi_{w}^k)$ by $\lor$E. To show $\bigvee_{(M',w)\in \mathcal{M}}(\chi_{w}^k\land\top\subseteq\chi_{w}^k), \mathsf{x}\subseteq\mathsf{b}\vdash\bigvee_{(M,T)\in\mathcal{Y}}\theta^k_{T}$, by Lemma \ref{lemma:inclusion_disjunction_elimination_generalization} \ref{lemma:inclusion_disjunction_elimination_generalization_non_modal} it suffices to show that for all nonempty teams $S$,
\begin{equation*}
    \bigvee_{w\in S}\chi^k_{w}, \bigwedge_{w\in S}(\top\subseteq\chi^k_{w}), \mathsf{x}\subseteq\mathsf{b}\vdash \bigvee_{(M,T)\in\mathcal{Y}}\theta^k_{T}.
\end{equation*}
If $S\in\mathcal{Y}$, the result follows by $\lor$I. If $S\not\in\mathcal{Y}$, then for any $w\in S$, $w\models\neg\mathsf{b}^\mathsf{x}$. We have that $md(\neg\mathsf{b}^\mathsf{x})=md(\mathsf{x}\subseteq\mathsf{b})\leq md(\mathsf{a}\subseteq\mathsf{b})=md(\phi)\leq k$, whence $\chi^k_{w}\vdash\neg\mathsf{b}^\mathsf{x}$ by Lemma \ref{lemma:world_satisfaction_to_hintikka_provability}. Therefore $\bigvee_{w\in S}\chi^k_{w}\vdash\neg\mathsf{b}^\mathsf{x}$ by $\lor$E. By $\subseteq_\neg$E, we have $\neg\mathsf{b}^\mathsf{x},\mathsf{x}\subseteq\mathsf{b}\vdash\bigvee_{(M,T)\in\mathcal{Y}}\theta^k_{T}$.

($\dashv$) Let $(M,T)\in \mathcal{Y}$. Then there is some $w\in T$ such that $w\models \mathsf{b}^\mathsf{x}$ so that since $md(\mathsf{b}^\mathsf{x})=md(\mathsf{x}\subseteq\mathsf{b})\leq md(\mathsf{a}\subseteq\mathsf{b})=md(\phi)\leq k$, we have $\chi^k_w\vdash \mathsf{b}^\mathsf{x}$ by Lemma \ref{lemma:world_satisfaction_to_hintikka_provability}. We have $\theta_{T}^k\vdash\top\subseteq\chi_{w}^k$ by $\land$E, and $\top\subseteq\chi_{w}^k\vdash \top\subseteq\mathsf{b}^\mathsf{x}$ by $\chi^k_w\vdash \mathsf{b}^\mathsf{x}$ and Lemma \ref{lem:top_atom_derivability_results} \ref{lem:top_atom_derivability_results_ii}. Then:
\begin{align*}
    \top\subseteq\mathsf{b}^\mathsf{x}&\vdash\top^{|\mathsf{b}|}\subseteq\mathsf{b}^\mathsf{x}\dots\mathsf{b}^\mathsf{x}  &\tag{Prop. \ref{prop:derivability_results} \ref{prop:derivability_results_weakening}} \\
    &\vdash\top^{|\mathsf{b}|}\subseteq\beta_1^{x_1}\dots\beta_n^{x_n}  &\tag{Lemma \ref{lem:top_atom_derivability_results} \ref{lem:top_atom_derivability_results_ii}}\\
    &\vdash x_1\dots x_n\subseteq\beta_1\dots\beta_n. &\tag{Lemma \ref{lem:top_atom_derivability_results} \ref{lem:top_atom_derivability_results_bot_top_exc}}\
    \end{align*}
    where $\mathsf{x}\subseteq\mathsf{b}=x_1\dots x_n\subseteq\beta_1\dots\beta_n$. Therefore $\bigvee_{(M,T)\in\mathcal{Y}}\theta^k_{T}\vdash\mathsf{x}\subseteq\mathsf{b}$ by $\lor$E.

\item[\boldmath$\cdot$]Let $\phi=\Diamond\psi$ and let $k\geq md(\phi)$. Then $n:=k-1\geq md(\psi)$ and by the induction hypothesis there is a nonempty $\mathcal{D}$ such that $\psi\dashv\vdash\bigvee_{(M,T)\in \mathcal{D}}\theta_{T}^n.$ By $\Diamond$Mon, $\Diamond\psi\dashv\vdash\Diamond\bigvee_{(M,T)\in \mathcal{D}}\theta_{T}^n$. 
We show that 
\begin{equation}\label{miracle_diamond}
\Diamond\bigvee_{(M,T)\in \mathcal{D}}\theta_{T}^n\dashv\vdash\bigvee_{(M,T)\in \mathcal{D}}\Diamond\theta_{T}^n\dashv\vdash\bigvee_{(M,T)\in \mathcal{D}}(\Diamond\bigvee_{w\in T}\chi_{w}^n\land\bigwedge_{w\in T}(\top\subseteq\Diamond\chi_{w}^n)).
\end{equation}
The modal depth of the formula on the very right is $\leq n+1=k$, so the result then follows by the induction cases for $\ML$-formulas, inclusion atoms, conjunction, and disjunction. 

The first equivalence in (\ref{miracle_diamond}) follows from the more general equivalence $\Diamond \phi\vee\Diamond \psi\dashv\vdash \Diamond(\phi\vee\psi)$, whose direction $\dashv$ can be derived by $\Diamond\lor$Distr, and the converse direction $\vdash$ by applying $\lor$E to $\Diamond\phi\vdash\Diamond (\phi\vee\psi)$ and $\Diamond\psi\vdash\Diamond (\phi\vee\psi)$ (which are given by $\Diamond$Mon). 

For the second equivalence in (\ref{miracle_diamond}), by $\lor$I and $\lor$E, it suffices to derive $\Diamond \theta_T^n\dashv\vdash \Diamond\bigvee_{w\in T}\chi_{w}^n\land\bigwedge_{w\in T}(\top\subseteq\Diamond\chi_{w}^n)$ for each $T\in \mathcal{D}$, i.e.,
\begin{equation}\label{miracle_diamond2}
\Diamond(\bigvee_{w\in T}\chi_{w}^n\land\bigwedge_{w\in T}(\top\subseteq\chi_{w}^n))\dashv\vdash \Diamond\bigvee_{w\in T}\chi_{w}^n\land\bigwedge_{w\in T}(\top\subseteq\Diamond\chi_{w}^n).
\end{equation}
Intuitively, if a team $S$ satisfies the formula on the right in (\ref{miracle_diamond2}), this means that $S$ has a successor team that is a subset (modulo bisimulation) of $T$ (captured by the left conjunct),
and that all elements in $T$ can be seen from $S$ (captured by the right conjunct). Combining these facts, one gets that $S$ has the team $T$ (modulo bisimulation) as a successor team---which is what the formula $\Diamond \theta_T^n$ on the left of the equivalence expresses.

Now, the direction $\vdash$ of (\ref{miracle_diamond2}) can be derived by:
\begin{align*}
\Diamond(\bigvee_{w\in T}\chi_{w}^n\land\bigwedge_{w\in T}(\top\subseteq\chi_{w}^n)) &\vdash\Diamond\bigvee_{w\in T}\chi_{w}^n\land\bigwedge_{w\in T}\Diamond(\top\subseteq\chi_{w}^n)  &\tag{$\Diamond$Mon}\\
&\vdash\Diamond\bigvee_{w\in T}\chi_{w}^n\land\bigwedge_{w\in T}(\top\subseteq\Diamond\chi_{w}^n),  &\tag{$\Diamond_\subseteq$Distr}
\end{align*}
while the other direction $\dashv$ is derived by:  
\begin{align*}
\Diamond\bigvee_{w\in T}\chi_{w}^n\land\bigwedge_{w\in T}(\top\subseteq\Diamond\chi_{w}^n)&\vdash \Diamond((\bigvee_{w\in T}\chi_{w}^n\vee \bigvee_{w\in T}\chi_{w}^n)\land\bigwedge_{w\in T}(\top\subseteq\chi_{w}^n))   \tag{$\subseteq_\Diamond$Distr}\\
&\vdash \Diamond(\bigvee_{w\in T}\chi_{w}^n\land\bigwedge_{w\in T}(\top\subseteq\chi_{w}^n)).
\tag{$\Diamond$Mon}
\end{align*}



\item[\boldmath$\cdot$]Let $\phi=\Box\psi$ and let $k\geq md(\phi)$. Then $n=k-1\geq md(\psi)$, so by the induction hypothesis there is a nonempty $\mathcal{D}$ such that $\psi\dashv\vdash\bigvee_{(M,T)\in \mathcal{D}}\theta_{T}^n$. By $\Box$Mon, we have that 
$\Box\psi\dashv\vdash\Box\bigvee_{(M,T)\in \mathcal{D}}\theta_{T}^n$. We show that
$$\Box\bigvee_{(M,T)\in \mathcal{D}}\theta_{T}^n\dashv\vdash\bigvee_{\mathcal{C}\subseteq\mathcal{D}}(\Box\bigvee_{w\in\biguplus \mathcal{C}}\chi_{w}^n\land\bigwedge_{w\in\biguplus \mathcal{C}}(\top\subseteq\Diamond\chi_{w}^n)).$$

The modal depth of the formula on the right is $\leq n+1=k$, so the result then follows by the induction cases for $\ML$-formulas, inclusion atoms, conjunction, and disjunction. Intuitively, if a team $S$ satisfies the formula on the left of the above equivalence, this means that $R[S]$ is bisimilar to the (disjoint) union of some teams in $\mathcal{D}$, i.e., $R[S]\leftrightarroweq_{n}\biguplus \mathcal{C}$ for some $\mathcal{C}\subseteq \mathcal{D}$ (see Lemma \ref{lemma:nf_disjoint_union}). In other words, by the forth condition, $R[S]$ is bisimilar to a subteam of $\biguplus \mathcal{C}$ (which is captured by the left conjunct of the disjunct corresponding to $\mathcal{C}$ of the formula on the right of the equivalence), and, by the back condition, each world in $\biguplus \mathcal{C}$ can be seen from $S$ (which is captured by the right conjunct of the relevant disjunct).

($\vdash$) 
By Lemma \ref{lemma:inclusion_disjunction_elimination_generalization} \ref{lemma:inclusion_disjunction_elimination_generalization_modal} it suffices to show
\begin{equation*}
    \Box\bigvee_{(M,T)\in\mathcal{E}}\bigvee_{w\in T}\chi_{w}^n,\bigwedge_{(M,T)\in\mathcal{E}}\bigwedge_{w\in T}(\top\subseteq\Diamond\chi_{w}^n)\vdash\bigvee_{\mathcal{C}\subseteq\mathcal{D}}(\Box\bigvee_{w\in\biguplus \mathcal{C}}\chi_{w}^n\land\bigwedge_{w\in\biguplus \mathcal{C}}(\top\subseteq\Diamond\chi_{w}^n)),
\end{equation*}
for all nonempty $\mathcal{E}\subseteq\mathcal{D}$. This reduces to showing
\begin{equation*}
    \Box\bigvee_{w\in\biguplus\mathcal{E}}\chi_{w}^n,\bigwedge_{w\in\biguplus\mathcal{E}}(\top\subseteq\Diamond\chi_{w}^n)\vdash\bigvee_{\mathcal{C}\subseteq\mathcal{D}}(\Box\bigvee_{w\in\biguplus \mathcal{C}}\chi_{w}^n\land\bigwedge_{w\in\biguplus \mathcal{C}}(\top\subseteq\Diamond\chi_{w}^n)),
\end{equation*}
for all nonempty $\mathcal{E}\subseteq\mathcal{D}$, which is given by $\lor$I. 

($\dashv$) Let $\mathcal{C}\subseteq\mathcal{D}$. We have:
\begin{align*}
    &&&\Box\bigvee_{w\in\biguplus \mathcal{C}}\chi_{w}^n \land \bigwedge_{w\in\biguplus \mathcal{C}}(\top\subseteq\Diamond\chi_{w}^n) \\
    &\vdash  &&\Box\bigvee_{w\in\biguplus \mathcal{C}}\chi_{w}^n \land\bigwedge_{w\in\biguplus \mathcal{C}}\Box(\top\subseteq\chi_{w}^n)\tag{$\Diamond\Box_\subseteq$Exc}\\
    &\vdash  &&\Box(\bigvee_{w\in\biguplus \mathcal{C}}\chi_{w}^n \land\bigwedge_{w\in\biguplus \mathcal{C}}(\top\subseteq\chi_{w}^n))\tag{$\Box$Mon}\\
    &=&&\Box \theta^n_{\biguplus\mathcal{C}}\\
    &\vdash &&\Box \bigvee_{(M,T)\in\mathcal{C}}\theta_{T}^n\tag{Lemma \ref{lemma:disjoint_union_provability}, $\Box$Mon}\\
    &\vdash &&\Box \bigvee_{(M,T)\in\mathcal{D}}\theta_{T}^n\tag{$\vee$I, $\Box$Mon}
\end{align*}
The result then follows by $\vee$E.\qedhere
\end{enumerate}
\end{proof}

\subsection{$\ML(\triangledown)$ and $\ML(\DotDiamond)$}

By adapting relevant rules from the system for modal inclusion logic $\ML(\subseteq)$, we obtain sound and complete systems for the two expressively equivalent might-operator logics $\ML(\triangledown)$ and $\ML(\DotDiamond)$.

\begin{definition}
The natural deduction system for $\ML(\triangledown)/\ML(\DotDiamond)$ consists of the classical rules in Table \ref{table:classical_connectives} and the $\triangledown/\DotDiamond$-rules in Table \ref{table:might_system}.
\end{definition}

\begin{table}[h]\centering
  \renewcommand*{\arraystretch}{3.8}
\begin{tabular*}{\linewidth}{@{\extracolsep{\fill}}|c c c|}
\hline
\hspace{.5cm}
   \AxiomC{$D_0$}\kern-2em
\noLine
\UnaryInfC{$\triangledown\phi$}
   \AxiomC{$D_1$}
\noLine
\UnaryInfC{$\triangledown\psi$}
\RightLabel{$\triangledown$Join}
\BinaryInfC{$\triangledown((\phi\lor\psi)\land\triangledown\phi\land\triangledown\psi)$}
\DisplayProof
&
\AxiomC{$D$}
\noLine
\UnaryInfC{$\triangledown\triangledown\phi$}
\RightLabel{$\triangledown$\ E}
\UnaryInfC{$\triangledown\phi$}
\DisplayProof 
& 
 \AxiomC{$D$}\kern-3em
\noLine
\UnaryInfC{$\DotDiamond(\DotDiamond\phi\land\psi)$}
\RightLabel{$\DotDiamond\land$Simpl}
\UnaryInfC{$\DotDiamond(\phi\land\psi)$}
\DisplayProof

 \\[3ex] \hline 
 
\AxiomC{$D$}\kern-5em
\noLine
\UnaryInfC{$\bullet\phi$}
\AxiomC{$[\phi]$}
\noLine
\UnaryInfC{$D_0$}
\noLine
\UnaryInfC{$\psi$}
\RightLabel{$\bullet$Mon\scriptsize(1)}
\BinaryInfC{$\bullet\psi$}
\DisplayProof
& 
\AxiomC{$D$}\kern-6em
\noLine
\UnaryInfC{$\bullet(\phi\lor\psi)$}
\RightLabel{$\bullet\lor$Distr}
\UnaryInfC{$\bullet\phi\lor\bullet\psi$}
\DisplayProof
&
\AxiomC{$D$}\kern-2em
\noLine
\UnaryInfC{$\alpha$}
\RightLabel{$\bullet$\ I}
\UnaryInfC{$\bullet\alpha$}
\DisplayProof 

\hspace{.5em}

 \AxiomC{$D_0$}\kern-1.5em
\noLine
\UnaryInfC{$\neg\alpha$}
\AxiomC{$D_1$}
\noLine
\UnaryInfC{$\bullet\alpha$}
\RightLabel{$\bullet_\neg$E}
\BinaryInfC{$\phi$}
\DisplayProof \\     
 
    \multicolumn{3}{|l|}{
\AxiomC{$D$}
\kern-2em\noLine
\UnaryInfC{$\bullet\phi\lor\psi$}

\AxiomC{$[\bullet\phi]$}
\noLine
\UnaryInfC{$D_0$\ \ }
\noLine
\UnaryInfC{$\chi\ \ $}

\AxiomC{$[\psi]$}
\noLine
\UnaryInfC{$D_1$}
\noLine
\UnaryInfC{$\chi$}

\RightLabel{$\lor_{\bullet}$E}
\TrinaryInfC{$\chi$}
\DisplayProof  
    
  \AxiomC{$D_0$}
\noLine
    \UnaryInfC{$\phi\lor\psi$}
\AxiomC{$D_1$}
\noLine    
    \UnaryInfC{$\bullet \chi_1$}
\AxiomC{$\dots$}
\noLine  
\UnaryInfC{}
\noLine  
\UnaryInfC{}
\noLine  
\UnaryInfC{$\dots$}
\AxiomC{$D_n$}
\noLine    
    \UnaryInfC{$ \bullet \chi_n$}
\RightLabel{$\bullet$Distr}
    \QuaternaryInfC{$((\phi\lor \chi_1\lor\dots\lor\chi_n)\land\bullet\chi_1\land\dots\land\bullet \chi_n)\lor\psi$}
    \DisplayProof}\\ 
\multicolumn{3}{|l|}{(1) The undischarged assumptions in $D_0$ are $\ML$ formulas. }\\ 

    \hline
 
\kern-2em\AxiomC{$D_0$}
    \noLine
    \UnaryInfC{$\bullet\Diamond\psi$}
    \AxiomC{$D_1$}
    \noLine
    \UnaryInfC{$\Box\bullet\phi$}
    \RightLabel{$\Box\Diamond_{\bullet}$Exc}
    \BinaryInfC{$\bullet\Diamond\phi$}
    \DisplayProof 
    
&
 \kern-2em\AxiomC{$D$}
\noLine
\UnaryInfC{$\bullet\Diamond\phi$}
\RightLabel{$\Diamond\Box_{\bullet}$Exc}
\UnaryInfC{$\Box\bullet\phi$}
\DisplayProof 

&

\kern-2em\AxiomC{$D$}
\noLine
\UnaryInfC{$\Box(\bullet\phi\lor\psi)$}

\AxiomC{$[\bullet\Diamond\phi]$}
\noLine
\UnaryInfC{$D_0$}
\noLine
\UnaryInfC{$\chi$}

\AxiomC{$[\Box\psi]$}
\noLine
\UnaryInfC{$D_1$}
\noLine
\UnaryInfC{$\chi$}

\RightLabel{$\Box\lor_{\bullet}$E}
\TrinaryInfC{$\chi$}
\DisplayProof \\

\multicolumn{3}{|l|}
{\AxiomC{$D$}
\noLine
\UnaryInfC{$\Diamond\bullet\phi$}
\RightLabel{$\Diamond_{\bullet}$Distr}
\UnaryInfC{$\bullet\Diamond\phi$}
\DisplayProof

\AxiomC{$D_0$}
\noLine
\UnaryInfC{$\Diamond\phi$}
\AxiomC{$D_1$}
\noLine    
    \UnaryInfC{$\bullet\Diamond\psi_1$}
\AxiomC{$\dots$}
\noLine  
\UnaryInfC{}
\noLine  
\UnaryInfC{}
\noLine  
\UnaryInfC{$\dots$}
\AxiomC{$D_n$}
\noLine    
    \UnaryInfC{$ \bullet\Diamond\psi_n$}
  
\RightLabel{$\bullet_\Diamond$Distr}
  \QuaternaryInfC{$\Diamond((\phi\lor\psi_1\lor\dots\lor\psi_n)\land\bullet\psi_1\land\dots\land\bullet\psi_n)$}
\DisplayProof} \\[5ex]
\hline
\end{tabular*}
 \caption{Rules for $\ML(\bullet)$ with $\bullet\in\{\triangledown,\DotDiamond\}$.}
\label{table:might_system}
\end{table}

Most rules in Table \ref{table:might_system} correspond to rules or derivable results for the $\ML(\subseteq)$-system applied to inclusion atoms of the form $\top\subseteq \alpha$ (recall the equivalence $\top\subseteq\alpha\equiv\bullet\alpha$, where $\bullet \in \{\triangledown,\DotDiamond\}$). Given that there is no syntactic restriction on what may appear in the scope of $\bullet$ (unlike with inclusion atoms of the corresponding form), these rules may now be generalized to also apply to formulas $\bullet\phi$ where $\phi$ is non-classical; this has been done whenever the generalization in question is sound.
The rules $\bullet_{\lnot}$E, $\lor_{\bullet}$E, $\bullet$Distr, $\Box\Diamond_\bullet$Exc, $\Diamond \Box_\bullet$Exc, $\Box\vee_\bullet$E, $\Diamond_\bullet$Distr, and $\bullet_\Diamond$Distr correspond to the rules $\subseteq_\neg$E, $\lor_\subseteq$E, $\subseteq$Distr, $\Box\Diamond_\subseteq$E, $\Diamond\Box_\subseteq$E, $\Box\vee_{\subseteq}$E, $\Diamond_\subseteq$Distr, and $\subseteq_\Diamond$Distr, respectively. The rule $\bullet$I corresponds to Lemma \ref{lem:top_atom_derivability_results} \ref{lem:top_atom_derivability_results_i}, and $\bullet$Mon (restricted to classical formulas) corresponds to Lemma \ref{lem:top_atom_derivability_results} \ref{lem:top_atom_derivability_results_ii}.

For both systems, we add a rule $\bullet\vee$Distr asserting the distributivity of $\bullet$ over $\vee$. The rules $\triangledown$Join and $\DotDiamond\land$Simpl reflect the entailments pointed out in Section \ref{section:preliminaries} and serve to differentiate the systems. The $\DotDiamond$-version of the rule $\triangledown$E 
is also sound, and it is derivable using $\DotDiamond\land$Simpl and $\DotDiamond$Mon.
\begin{theorem}[Soundness]\label{soundness_might}
If $\Gamma\vdash\phi$, then $\Gamma\models\phi$.
\end{theorem}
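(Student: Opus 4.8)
The plan is to prove soundness of the natural deduction system for $\ML(\bullet)$ (with $\bullet\in\{\triangledown,\DotDiamond\}$) by induction on the length of derivations, which reduces to checking that each axiom and rule in Tables \ref{table:classical_connectives} and \ref{table:might_system} preserves semantic entailment: for each rule, assuming the premises' derived conclusions entail them, one shows the rule's conclusion is entailed by the (undischarged) assumptions. The rules in Table \ref{table:classical_connectives} were already handled in the soundness proof for $\ML(\subseteq)$ (Theorem \ref{soundness}) and need no new argument, so the work concentrates on Table \ref{table:might_system}.

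For the new rules, the key observation is the translation $\top\subseteq\alpha\equiv\bullet\alpha$ (Fact \ref{equi_might}): the rules $\bullet_\lnot$E, $\lor_\bullet$E, $\bullet$Distr, $\Box\Diamond_\bullet$Exc, $\Diamond\Box_\bullet$Exc, $\Box\lor_\bullet$E, $\Diamond_\bullet$Distr, $\bullet_\Diamond$Distr, $\bullet$I are soundness-equivalent (for classical scope) to the already-verified $\ML(\subseteq)$-rules $\subseteq_\lnot$E, $\lor_\subseteq$E, $\subseteq$Distr, $\Box\Diamond_\subseteq$Exc, $\Diamond\Box_\subseteq$Exc, $\Box\lor_\subseteq$E, $\Diamond_\subseteq$Distr, $\subseteq_\Diamond$Distr, and Lemma \ref{lem:top_atom_derivability_results}\ref{lem:top_atom_derivability_results_i}; for these I would note that the soundness argument carries over \emph{mutatis mutandis}, except that where the scope is now an arbitrary (possibly non-classical) $\phi$ rather than a classical formula, one re-runs the relevant team-semantic computation directly. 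For instance, for $\bullet$Mon one uses that $T\models\bullet\phi$ means $T=\emptyset$ or some (nonempty, resp. singleton) $S\subseteq T$ has $S\models\phi$; since the subderivation $D_0$ from assumption $[\phi]$ has only $\ML$-formulas as other undischarged assumptions, those are downward closed, so $\gamma\in\Gamma$ true in $T$ are true in $S$, whence $S\models\psi$ and $T\models\bullet\psi$. The distributivity rules $\triangledown\lor$Distr and $\DotDiamond\lor$Distr, and $\Diamond_\bullet$Distr, follow from direct unfolding of the truth conditions for $\lor$, $\Diamond$, $\triangledown$, $\DotDiamond$. The genuinely new content is in $\triangledown$Join, $\triangledown$E, and $\DotDiamond\land$Simpl, which encode the iteration identities flagged in Section \ref{section:preliminaries}: $\triangledown$Join is validated by $\triangledown(\triangledown\phi\land\triangledown\psi)\equiv\triangledown\phi\land\triangledown\psi$ together with $\triangledown\phi,\triangledown\psi\models\triangledown((\phi\lor\psi)\land\triangledown\phi\land\triangledown\psi)$ — given nonempty $S_\phi,S_\psi\subseteq T$ witnessing $\triangledown\phi,\triangledown\psi$, the subteam $S_\phi\cup S_\psi$ works; $\triangledown$E is immediate from $\triangledown\triangledown\phi\models\triangledown\phi$ (any nonempty witness of the inner $\triangledown\phi$ inside a witness of the outer one witnesses $\triangledown\phi$ for $T$, using union/downward considerations appropriately — actually directly: $T\models\triangledown\triangledown\phi$ gives nonempty $S\subseteq T$ with $S\models\triangledown\phi$, hence nonempty $S'\subseteq S\subseteq T$ with $S'\models\phi$, so $T\models\triangledown\phi$); and $\DotDiamond\land$Simpl uses $\DotDiamond(\phi\land\DotDiamond\psi)\equiv\DotDiamond(\phi\land\psi)$, which holds because the singular might re-evaluates at a single world $w$ where $\DotDiamond\psi$ collapses to $\psi$ at that same $w$.

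The main obstacle is organizational rather than deep: one must be careful with the empty-team cases throughout (all $\bullet\phi$ are vacuously true in $\emptyset$, so each rule's conclusion is trivially satisfied there, but the premises may be used nontrivially), and one must track the role of the side condition in $\bullet$Mon that the other undischarged assumptions of $D_0$ be $\ML$-formulas — this downward-closure requirement is exactly what makes passing to a subteam legitimate, and an analogous care applies to $\lor_\bullet$E and $\Box\lor_\bullet$E. The subtlest single verification is $\bullet_\Diamond$Distr in the $\DotDiamond$ case (and, symmetrically, the $\triangledown$ case), where one must produce a successor team $S'$ of $T$ extending a given successor team $S$ by, for each $i$, a world $v_i$ accessible from $T$ satisfying $\psi_i$ — obtained either from a witness of $\DotDiamond\Diamond\psi_i$ directly, or, when $\neg\Diamond\psi_i$ holds at some $w_i\in T$, by taking any $R$-successor of $w_i$ in the lax successor team, exactly mirroring the $\subseteq_\Diamond$Distr computation in Theorem \ref{soundness}. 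I would write the proof by stating "the soundness of the rules corresponding to $\ML(\subseteq)$-rules via $\top\subseteq\alpha\equiv\bullet\alpha$ follows as in Theorem \ref{soundness}" and then give explicit short arguments only for $\triangledown$Join, $\triangledown$E, $\DotDiamond\land$Simpl, and $\bullet\lor$Distr, plus a remark on the empty-team and downward-closure bookkeeping.
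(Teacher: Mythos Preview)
Your proposal is correct and matches the paper's approach closely: both defer most rules to the $\ML(\subseteq)$ soundness proof via the correspondence $\top\subseteq\alpha\equiv\bullet\alpha$, reduce to nonempty teams via the empty team property, and give explicit short arguments for $\bullet$Mon (using downward closure of the $\ML$-side-assumptions), $\bullet\lor$Distr, $\triangledown$E, $\triangledown$Join (via the union $S_\phi\cup S_\psi$), and $\DotDiamond\land$Simpl (via collapse at a singleton). One small blemish: in your sketch of $\bullet_\Diamond$Distr you import the ``$\neg\Diamond\psi_i$ holds at some $w_i$'' case from the $\subseteq_\Diamond$Distr argument, but that case does not arise here---the premises $\bullet\Diamond\psi_i$ only ever supply a positive witness $w_i\models\Diamond\psi_i$, so the argument is actually simpler than you suggest.
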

\begin{proof}
Most cases are analogous to those for $\ML(\subseteq)$. We only prove some of the more interesting cases. By the empty team property, it suffices to check soundness for an arbitrary nonempty team $T$.
\begin{enumerate}[align=left]
    \item[($\bullet$Mon)] Let $T\models\bullet\phi$ and $T\models\gamma$ for all $\gamma\in \Gamma$, and assume that $\Gamma,\phi\models\psi$, where $\Gamma$ consists of $\ML$-formulas. Then there is a nonempty (singleton) subteam $T^\prime\subseteq T$ such that $T^\prime\models\phi$. By downward closure of the formulas in $\Gamma$, we have $T^\prime\models\gamma$ for all $\gamma\in \Gamma$. It follows that $T^\prime\models\psi$, and hence $T\models\bullet\psi$.
            
    \item[($\bullet\lor$Distr)] Let $T\models\bullet(\phi\lor\psi)$. Then there is a nonempty (singleton) subteam $T^\prime\subseteq T$ such that $T^\prime\models\phi\lor\psi$. Then there are $T_1,T_2\subseteq T^\prime$ such that $T_1\cup T_2= T^\prime$, $T_1\models\phi$ and $T_2\models\psi$. W.l.o.g. suppose that $T_1$ is nonempty. Then $T\models\bullet\phi$ so that $T\models\bullet\phi\lor\bullet\psi$.
    
    \item[($\triangledown$E)] Let $T\models\triangledown\triangledown\phi$. Then there are nonempty subteams  $T_1, T_2\subseteq T$ such that $T_1\subseteq T_2\subseteq T$ and $T_1\models\phi$. Thus $T\models\triangledown\phi$.
    
    \item[($\triangledown$Join)] Let $T\models\triangledown\phi\land\triangledown\psi$. Then there are nonempty subteams $T_1,T_2\subseteq T$ such that $T_1\models\phi$ and $T_2\models \psi$. Then $T^\prime=T_1\cup T_2$ is a nonempty subteam of $T$ such that $T^\prime\models \phi\lor\psi$, $T^\prime\models\triangledown\phi$ and $T^\prime\models\triangledown\psi$. Thus $T\models\triangledown((\phi\lor\psi)\land\triangledown\phi\land\triangledown\psi)$.

    \item[($\DotDiamond\land$Simpl)] Let $T\models\DotDiamond(\DotDiamond\phi\land\psi)$. Then there is a $w\in T$ such that $\{w\}\models\psi$ and $\{w\}\models\DotDiamond\phi$, from which it follows that $\{w\}\models\phi$. Hence $T\models\DotDiamond(\phi\land\psi)$.\qedhere
\end{enumerate}
\end{proof}

\begin{proposition} \label{lemma:der_MLmight}\
\begin{enumerate}[label=(\roman*)]
    \item \label{lemma:der_MLmight_i} $\bullet\phi\lor\bullet\psi\dashv\vdash\bullet(\phi\lor\psi)$.
    \item \label{lemma:der_MLmight_ii} $\bullet(\phi\land\psi)\vdash
\bullet\phi\land\bullet\psi$.
\end{enumerate}
\end{proposition}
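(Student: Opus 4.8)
The plan is to derive both parts directly in the natural deduction system for $\ML(\bullet)$ (Table~\ref{table:might_system}), using essentially only the monotonicity rule $\bullet$Mon together with the standard rules for $\vee$ and $\land$ from Table~\ref{table:classical_connectives}; the one might-specific rule that is genuinely needed is $\bullet\vee$Distr.

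For item~\ref{lemma:der_MLmight_i}: the direction $\bullet(\phi\lor\psi)\vdash\bullet\phi\lor\bullet\psi$ is immediate, being exactly the rule $\bullet\vee$Distr. For the converse direction, I would start from the assumption $\bullet\phi\lor\bullet\psi$ and argue by $\lor$E: it suffices to derive $\bullet(\phi\lor\psi)$ from $[\bullet\phi]$ and, symmetrically, from $[\bullet\psi]$. In the first case, a single application of $\lor$I gives the subderivation $\phi\vdash\phi\lor\psi$, and applying $\bullet$Mon to this subderivation together with $\bullet\phi$ yields $\bullet(\phi\lor\psi)$; the case from $[\bullet\psi]$ is identical. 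Then $\lor$E closes the argument.

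For item~\ref{lemma:der_MLmight_ii}: starting from $\bullet(\phi\land\psi)$, I would apply $\bullet$Mon twice — once with the one-step subderivation $\phi\land\psi\vdash\phi$ (by $\land$E) to obtain $\bullet\phi$, and once with $\phi\land\psi\vdash\psi$ (again by $\land$E) to obtain $\bullet\psi$ — and then conclude $\bullet\phi\land\bullet\psi$ by $\land$I.

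The only point requiring care — and the nearest thing to an obstacle — is the side conditions demanding that the undischarged assumptions in the relevant subderivations ($D_0$ in $\bullet$Mon, and $D_0,D_1$ in $\lor$E) be $\ML$-formulas. In every application above the subderivation in question is a single inference step ($\lor$I or $\land$E) whose sole assumption is precisely the formula discharged by the rule, so after discharging there are no undischarged assumptions and the side condition holds vacuously. Hence the restrictions never bite, and both derivations go through uniformly for $\bullet=\triangledown$ and for $\bullet=\DotDiamond$.
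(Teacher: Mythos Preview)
Your proposal is correct and matches the paper's own proof essentially line for line: the paper derives the $\vdash$ direction of \ref{lemma:der_MLmight_i} by $\lor$E, $\bullet$Mon and $\lor$I, the $\dashv$ direction by $\bullet\lor$Distr, and item \ref{lemma:der_MLmight_ii} by $\bullet$Mon, $\land$E and $\land$I. Your explicit check that the side conditions on $\lor$E and $\bullet$Mon are vacuously satisfied is a welcome addition that the paper leaves implicit.
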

\begin{proof}
   The direction $\vdash$ in item \ref{lemma:der_MLmight_i} follows by $\lor$E, $\bullet$Mon and $\lor$I, the other direction is by $\bullet\lor$Distr. Item \ref{lemma:der_MLmight_ii} follows by $\bullet$Mon, $\land$E and $\land$I.
\end{proof}

We show completeness using the same strategy as used for $\ML(\subseteq)$. Many of the details are analogous to parts of the $\ML(\subseteq)$-proof; we omit most of these details and focus on the steps that specifically concern the might-operators. 


\begin{lemma}[Provable equivalence of the normal form]\label{nfproveeqMLDD}
For any formula $\phi$ in $\ML(\bullet)$ and $k\geq md(\phi)$, there is a (finite, nonempty) property $\mathcal{C}$ such that
$$\phi \dashv\vdash\bigvee_{(M,T)\in\mathcal{C}}(\bigvee_{w\in T}\chi^k_{w}\land\bigwedge_{w\in T}\bullet\chi^k_{w}) $$
\end{lemma}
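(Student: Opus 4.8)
The plan is to prove the lemma by induction on the structure of $\phi$, following the proof of Lemma~\ref{nfproveeq} as closely as possible. For $\phi$ a classical formula, a disjunction, a conjunction, a formula $\Diamond\psi$, or a formula $\Box\psi$, the arguments are the ones already given for $\ML(\subseteq)$, with every unary top inclusion atom $\top\subseteq\chi^k_w$ replaced by $\bullet\chi^k_w$ and with the inclusion rules of Tables~\ref{table:inclusion_rules}--\ref{table:modal_inclusion} replaced by their $\ML(\bullet)$-counterparts from Table~\ref{table:might_system}. Concretely: the classical case uses the standard $\ML$ normal form, Proposition~\ref{prop:classical_completeness}, and $\bullet$I in place of Lemma~\ref{lem:top_atom_derivability_results}~\ref{lem:top_atom_derivability_results_i} (so that $\chi^k_w\vdash\chi^k_w\wedge\bullet\chi^k_w$); the disjunction case takes the union of the two properties given by the induction hypothesis; the conjunction case uses the set $\mathcal{Y}$ of disjoint unions from $\mathcal{C}$ that are $k$-bisimilar to disjoint unions from $\mathcal{D}$, together with the $\bullet$-analogues of Lemma~\ref{lemma:inclusion_disjunction_elimination_generalization} (proved the same way from $\lor_\bullet$E and $\Box\lor_\bullet$E), of Lemma~\ref{lemma:disjoint_union_provability} (proved from $\bullet$Distr), and of Lemma~\ref{lemma:non_bisimilar_hintikka_formulas_contradictory}; and the $\Diamond$ and $\Box$ cases use $\Diamond$Mon/$\Box$Mon with $\Diamond_\bullet$Distr, $\bullet_\Diamond$Distr, $\Diamond\Box_\bullet$Exc, $\Box\Diamond_\bullet$Exc to derive $\Diamond\theta^n_T\dashv\vdash\Diamond\bigvee_{w\in T}\chi^n_w\wedge\bigwedge_{w\in T}\bullet\Diamond\chi^n_w$ and the analogous $\Box$-equivalence, after which one invokes the already-settled cases for classical formulas, for conjunction, for disjunction, and for $\bullet$ applied to the classical formulas $\Diamond\chi^n_w$.

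The genuinely new case is $\phi=\bullet\psi$, which has no analogue in $\ML(\subseteq)$ since inclusion atoms admit only classical subformulas. As $k\geq md(\bullet\psi)\geq md(\psi)$, the induction hypothesis supplies a finite nonempty $\mathcal{D}$ with $\psi\dashv\vdash\bigvee_{(M,T)\in\mathcal{D}}\theta^k_T$, whence $\bullet\psi\dashv\vdash\bigvee_{(M,T)\in\mathcal{D}}\bullet\theta^k_T$ by $\bullet$Mon and Proposition~\ref{lemma:der_MLmight}~\ref{lemma:der_MLmight_i}. It then suffices to normalize each $\bullet\theta^k_T$, where $\theta^k_T=\bigvee_{w\in T}\chi^k_w\wedge\bigwedge_{w\in T}\bullet\chi^k_w$. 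For $\bullet=\DotDiamond$: repeated application of $\DotDiamond\land$Simpl pulls every conjunct $\chi^k_w$ out of the scope of the inner diamonds, giving $\DotDiamond\theta^k_T\vdash\DotDiamond\bigwedge_{w\in T}\chi^k_w$; if $T$ contains two worlds that are not $k$-bisimilar then $\bigwedge_{w\in T}\chi^k_w\vdash\bot$ by Lemma~\ref{lemma:non_bisimilar_hintikka_formulas_contradictory}~\ref{lemma:non_bisimilar_hintikka_formulas_contradictory_world}, so $\DotDiamond\theta^k_T\dashv\vdash\bot\dashv\vdash\theta^k_{(M^*,\emptyset)}$; otherwise all worlds of $T$ are $k$-bisimilar to one $w$ and $\DotDiamond\theta^k_T\dashv\vdash\DotDiamond\chi^k_w$. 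For $\bullet=\triangledown$: one shows $\triangledown\theta^k_T\dashv\vdash\bigwedge_{w\in T}\triangledown\chi^k_w$, the direction $\vdash$ from Proposition~\ref{lemma:der_MLmight}~\ref{lemma:der_MLmight_ii}, $\triangledown$Mon, and $\triangledown$E, and the direction $\dashv$ from repeated application of $\triangledown$Join followed by $\triangledown$Mon. In both cases one is left having to normalize $\bullet\chi^k_w$ (for $\DotDiamond$) or $\bigwedge_{w\in T}\triangledown\chi^k_w$ (for $\triangledown$); note that $\bullet\chi^k_w$ is \emph{not} itself a normal-form disjunct, being strictly weaker than $\theta^k_{\{w\}}$. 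One checks $\bullet\chi^k_w\dashv\vdash\bigvee_{(M,S)\in\mathcal{E}}\theta^k_S$, where $\mathcal{E}$ collects $(M^*,\emptyset)$ together with all $(M,S)$ in which some world is $k$-bisimilar to $w$, and $\bigwedge_{w\in T}\triangledown\chi^k_w\dashv\vdash\bigvee_{(M,S)\in\mathcal{C}_T}\theta^k_S$, where $\mathcal{C}_T$ collects $(M^*,\emptyset)$ together with all $(M,S)$ in which every world of $T$ is $k$-bisimilar to some world of $S$; both equivalences are established along exactly the lines of the primitive-inclusion-atom case of Lemma~\ref{nfproveeq}, starting from the provable tautology $\vdash\bigvee_{(M',u)}(\chi^k_u\wedge\bullet\chi^k_u)$ over a finite representative set of worlds and then applying the generalized $\lor_\bullet$E together with Lemma~\ref{lemma:non_bisimilar_hintikka_formulas_contradictory}, $\bullet_\neg$E, $\bullet$Mon, and Lemma~\ref{lemma:world_satisfaction_to_hintikka_provability}; finiteness (up to provable equivalence) of Hintikka formulas keeps all disjunctions finite.

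The main obstacle is precisely this case $\phi=\bullet\psi$: producing a normal form for $\bullet\theta^k_T$ and verifying the two displayed provable equivalences entirely inside the proof system. The distinguishing rules $\triangledown$Join, $\triangledown$E and $\DotDiamond\land$Simpl were added to the respective systems exactly so that these manipulations go through; beyond invoking them, the work is the same careful bookkeeping with nested disjunctions and the generalized $\lor_\bullet$E rule already carried out, in the inclusion setting, in the proofs of Lemmas~\ref{lemma:inclusion_disjunction_elimination_generalization} and~\ref{nfproveeq}. The remaining cases are routine adaptations and present no new difficulties; once Lemma~\ref{nfproveeqMLDD} is in hand, completeness of the $\ML(\triangledown)$- and $\ML(\DotDiamond)$-systems follows by the argument already used for $\ML(\subseteq)$ in Theorem~\ref{Completeness}.
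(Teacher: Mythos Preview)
Your proposal is correct and follows essentially the same route as the paper: induction on $\phi$, with all cases other than $\bullet$ carried over from Lemma~\ref{nfproveeq} by swapping inclusion rules for their $\bullet$-analogues, and the new $\bullet\psi$ case handled by first distributing $\bullet$ over the outer disjunction and then simplifying $\bullet\theta^k_T$ using $\triangledown$Join/$\triangledown$E (for $\triangledown$) or $\DotDiamond\land$Simpl (for $\DotDiamond$). The only presentational difference is that after reaching $\bigwedge_{w\in T}\triangledown\chi^k_w$ (respectively $\DotDiamond\bigwedge_{w\in T}\chi^k_w$), the paper simply invokes a single sub-case ``(c): every $\bullet\alpha$ with $\alpha$ classical is provably equivalent to a normal form'' together with the already-established conjunction case, whereas you spell out explicit target properties $\mathcal{E}$ and $\mathcal{C}_T$ and, for $\DotDiamond$, further collapse $\DotDiamond\bigwedge_{w\in T}\chi^k_w$ to $\bot$ or $\DotDiamond\chi^k_w$ depending on whether $T$ contains non-bisimilar worlds; this is a harmless refinement, not a different idea.
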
 
\begin{proof}
By induction on $\phi$. All inductive cases except the one for $\bullet$ are similar to the corresponding proofs for $\ML(\subseteq)$ (see Lemma \ref{nfproveeq}). Inspecting the proof of Lemma \ref{nfproveeq}, one sees that with the exception of the case for inclusion atoms, the lemma is derivable using rules which have analogues in the $\ML(\bullet)$-systems, together with the proof-theoretic results in Proposition \ref{prop:classical_completeness} and Lemmas \ref{lemma:hintikka_provable_equivalence}, \ref{lemma:team_hintikka_provable_equivalence}, \ref{lem:top_atom_derivability_results} \ref{lem:top_atom_derivability_results_i}, \ref{lem:top_atom_derivability_results} \ref{lem:top_atom_derivability_results_ii}, \ref{lemma:disjoint_union_provability}, \ref{lemma:inclusion_disjunction_elimination_generalization}, \ref{lemma:non_bisimilar_hintikka_formulas_contradictory}, and \ref{lemma:world_satisfaction_to_hintikka_provability}. These results, in turn, are also (with the exception of Lemmas \ref{lem:top_atom_derivability_results} \ref{lem:top_atom_derivability_results_i}, \ref{lem:top_atom_derivability_results} \ref{lem:top_atom_derivability_results_ii}) derivable using rules which have analogues in the $\ML(\bullet)$-system. Any steps making use of Lemma \ref{lem:top_atom_derivability_results} \ref{lem:top_atom_derivability_results_i} can be replaced by applications of $\bullet$I; similarly with Lemma \ref{lem:top_atom_derivability_results} \ref{lem:top_atom_derivability_results_ii} and $\bullet$Mon. We add the cases for $\bullet$.

Let $\phi=\bullet\psi$. We derive 
\begin{align*}
    \bullet\psi &\dashv\vdash \bullet\bigvee_{(M,T)\in \mathcal{C}}(\bigvee_{w\in T}\chi_{w}^k\land\bigwedge_{w\in T}\bullet\chi_{w}^k) \tag{Induction hypothesis, $\bullet$ Mon} \\
    &\dashv\vdash \bigvee_{(M,T)\in \mathcal{C}}\bullet(\bigvee_{w\in T}\chi_{w}^k\land\bigwedge_{w\in T}\bullet\chi_{w}^k) \tag{Prop. \ref{lemma:der_MLmight} \ref{lemma:der_MLmight_i}}
    \end{align*}
    We will now show:
    \begin{enumerate}[label=(\alph*)]
        \item $\bigvee_{(M,T)\in \mathcal{C}}\triangledown(\bigvee_{w\in T}\chi_{w}^k\land\bigwedge_{w\in T}\triangledown\chi_{w}^k) \dashv\vdash \bigvee_{(M,T)\in \mathcal{C}}\bigwedge_{w\in T}\triangledown\chi_{w}^k$ and
        \item $\bigvee_{(M,T)\in \mathcal{C}}\DotDiamond(\bigvee_{w\in T}\chi_{w}^k\land\bigwedge_{w\in T}\DotDiamond\chi_{w}^k) \dashv\vdash \bigvee_{(M,T)\in \mathcal{C}}\DotDiamond\bigwedge_{w\in T}\chi_{w}^k$ and
        \item every formula in the form  $\bullet\alpha$ is provably equivalent to a formula in normal form.
    \end{enumerate}
    The result will then follow by the induction cases for conjunction and disjunction.

We first show (a). By $\vee$E and $\vee$I, it suffices to show that $\triangledown(\bigvee_{w\in T}\chi_{w}^k\land\bigwedge_{w\in T}\triangledown\chi_{w}^k) \dashv\vdash \bigwedge_{w\in T}\triangledown\chi_{w}^k$ for an arbitrary $(M,T)\in \mathcal{C}$. For the direction $\vdash$, we derive $\triangledown(\bigvee_{w\in T}\chi_{w}^k\land\bigwedge_{w\in T}\triangledown\chi_{w}^k) \vdash \triangledown\bigwedge_{w\in T}\triangledown\chi_{w}^k\vdash\bigwedge_{w\in T}\triangledown\triangledown\chi_{w}^k\vdash\bigwedge_{w\in T}\triangledown\chi_{w}^k$ by $\triangledown$Mon, $\land$E, Proposition \ref{lemma:der_MLmight} \ref{lemma:der_MLmight_ii}, and $\triangledown$E. The direction $\dashv$ follows
by $\triangledown$Join. 

We now show (b). As above, it suffices to show that $\DotDiamond(\bigvee_{w\in T}\chi_{w}^k\land\bigwedge_{w\in T}\DotDiamond\chi_{w}^k) \dashv\vdash \DotDiamond\bigwedge_{w\in T}\chi_{w}^k$ for an arbitrary $(M,T)\in \mathcal{C}$. For the direction $\vdash$, we derive $\DotDiamond(\bigvee_{w\in T}\chi_{w}^k\land\bigwedge_{w\in T}\DotDiamond\chi_{w}^k) \vdash \DotDiamond\bigwedge_{w\in T}\DotDiamond\chi_{w}^k\vdash\DotDiamond\bigwedge_{w\in T}\chi_{w}^k$ by $\DotDiamond$Mon, $\land$E and $\DotDiamond\land$Simpl. For the direction $\dashv$, we note that $\bigwedge_{w\in T}\chi_{w}^k\vdash\bigvee_{w\in T}\chi_{w}^k\land \bigwedge_{w\in T}\DotDiamond\chi_{w}^k$ by $\land$E, $\lor$I, and $\DotDiamond$I. Therefore $\DotDiamond\bigwedge_{w\in T}\chi_{w}^k\vdash\DotDiamond(\bigvee_{w\in T}\chi_{w}^k\land\bigwedge_{w\in T}\DotDiamond\chi_{w}^k)$
by $\DotDiamond$Mon.

The proof of (c) is analogous to the proof of the primitive inclusion atom case in Lemma \ref{nfproveeq} (with an application of $\bullet$Mon replacing the appeal to Lemma \ref{lem:top_atom_derivability_results} \ref{lem:top_atom_derivability_results_ii}).\qedhere

\end{proof}


\begin{theorem}[Completeness]\label{CompletenessMLDD}
If $\Gamma\models\psi$, then $\Gamma\vdash\psi$.
\end{theorem}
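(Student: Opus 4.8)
The plan is to follow the completeness proof for $\ML(\subseteq)$ (Theorem \ref{Completeness}) step for step, using the $\bullet$-normal form ($\bullet\in\{\triangledown,\DotDiamond\}$) in place of the $\ML(\subseteq)$-normal form. First I would invoke compactness of $\ML(\triangledown)$ and $\ML(\DotDiamond)$ (noted at the end of Section \ref{section:expressive_power}) to replace $\Gamma$ by a finite $\Gamma_0\subseteq\Gamma$ with $\Gamma_0\models\psi$, reducing the goal to $\phi\vdash\psi$ with $\phi=\bigwedge_{\gamma\in\Gamma_0}\gamma$. Fix $k\geq\max\{md(\phi),md(\psi)\}$ and write $\nu^k_{M,T}$ for the normal-form disjunct $\bigvee_{w\in T}\chi^k_{M,w}\land\bigwedge_{w\in T}\bullet\chi^k_{M,w}$. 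By Lemma \ref{nfproveeqMLDD} there are finite nonempty properties $\mathcal{C},\mathcal{D}$ with $\phi\dashv\vdash\bigvee_{(M,T)\in\mathcal{C}}\nu^k_{M,T}$ and $\psi\dashv\vdash\bigvee_{(M,S)\in\mathcal{D}}\nu^k_{M,S}$; by Soundness (Theorem \ref{soundness_might}) and $\phi\models\psi$ the corresponding entailment between the two normal forms holds.

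The semantic half then carries over for free: by Fact \ref{equi_might} each $\bullet\chi^k_{M,w}$ is equivalent to $\top\subseteq\chi^k_{M,w}$, so $\nu^k_{M,T}\equiv\theta^k_{M,T}$, and replacing every $\nu$ by the equivalent $\theta$ turns the entailment above into exactly the hypothesis of Corollary \ref{coro:disjoint_union_entailment}. That corollary then yields, for each $(M,T)\in\mathcal{C}$, some $\mathcal{D}_T\subseteq\mathcal{D}$ with $M,T\leftrightarroweq_k\biguplus\mathcal{D}_T$.

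For the proof-theoretic half I would establish the two $\bullet$-analogues used in the $\ML(\subseteq)$-argument. The analogue of Lemma \ref{lemma:team_hintikka_provable_equivalence}, namely ``$M,T\leftrightarroweq_k M',S$ implies $\nu^k_{M,T}\dashv\vdash\nu^k_{M',S}$'', is obtained by repeating the proof of Lemma \ref{lemma:team_hintikka_provable_equivalence} with $\bullet$Mon in place of Lemma \ref{lem:top_atom_derivability_results} \ref{lem:top_atom_derivability_results_ii} (the rest of that proof uses only $\land$- and $\lor$-rules and Lemma \ref{lemma:hintikka_provable_equivalence}, all available here). The analogue of Lemma \ref{lemma:disjoint_union_provability}, namely ``$\nu^k_{\biguplus\mathcal{D}'}\vdash\bigvee_{(M,S)\in\mathcal{D}'}\nu^k_{M,S}$'', is obtained by repeating the proof of Lemma \ref{lemma:disjoint_union_provability} with $\bullet$Distr in place of $\subseteq$Distr. (Both analogues are in fact already used inside the proof of Lemma \ref{nfproveeqMLDD}, so nothing new is needed.) Then, for each $(M,T)\in\mathcal{C}$,
\[\nu^k_{M,T}\;\vdash\;\nu^k_{\biguplus\mathcal{D}_T}\;\vdash\;\bigvee_{(M',S)\in\mathcal{D}_T}\nu^k_{M',S}\;\vdash\;\bigvee_{(M',S)\in\mathcal{D}}\nu^k_{M',S}\]
by the first analogue (using $M,T\leftrightarroweq_k\biguplus\mathcal{D}_T$), the second analogue, and $\lor$I respectively; an application of $\lor$E over $\mathcal{C}$ gives $\bigvee_{(M,T)\in\mathcal{C}}\nu^k_{M,T}\vdash\bigvee_{(M',S)\in\mathcal{D}}\nu^k_{M',S}$, hence $\phi\vdash\psi$.

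The main obstacle is purely one of bookkeeping: one must check that every auxiliary result of Section \ref{section:axiomatizations} invoked in the $\ML(\subseteq)$-completeness proof transfers. The semantic ones transfer because $\nu^k_{M,T}$ and $\theta^k_{M,T}$ define the same team property (Fact \ref{equi_might}), and the proof-theoretic ones transfer because Table \ref{table:might_system} was designed to provide faithful $\bullet$-analogues of exactly the inclusion-atom rules those results rely on---a correspondence already spelled out in the proof of Lemma \ref{nfproveeqMLDD}. No step presents a new conceptual difficulty.
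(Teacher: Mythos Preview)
Your proposal is correct and matches the paper's own proof, which simply states that the argument is ``similar to the proof of Theorem~\ref{Completeness}'' and that the required auxiliary lemmas (in particular the $\bullet$-analogues of Lemmas~\ref{lemma:team_hintikka_provable_equivalence} and~\ref{lemma:disjoint_union_provability}) are available in the $\ML(\bullet)$-systems. You have spelled out in more detail how the semantic results transfer via Fact~\ref{equi_might} and how the proof-theoretic analogues are obtained, but the strategy is identical.
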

\begin{proof}
    Similar to the proof of Theorem \ref{Completeness}. Inspecting this proof, one sees that it can be conducted using only rules which have analogues in the $\ML(\bullet)$-systems, together with the proof-theoretic results in Proposition \ref{prop:classical_completeness} and Lemmas \ref{lemma:hintikka_provable_equivalence}, \ref{lemma:team_hintikka_provable_equivalence}, \ref{lem:top_atom_derivability_results} \ref{lem:top_atom_derivability_results_i}, \ref{lem:top_atom_derivability_results} \ref{lem:top_atom_derivability_results_ii}, \ref{lemma:disjoint_union_provability} and \ref{nfproveeq}. We have proved an analogue to Lemma \ref{nfproveeq} in Lemma \ref{nfproveeqMLDD}, and as noted above, analogues to the other required results can be obtained in $\ML(\bullet)$.
\end{proof}

\section{Concluding remarks and directions for further research} \label{section:conclusion}

In this article, we have addressed a recognized gap in the literature on team-based modal logics by presenting an axiomatization for modal inclusion logic $\ML(\subseteq)$. This logic, together with modal dependence logic and modal independence logic, are commonly considered to be the core team-based modal logics.
While modal dependence logic has already been axiomatized in previous work \cite{yang2017}, modal independence logic (see \cite{Kont}) as well as propositional independence logic are still missing an axiomatization.

We also studied two other union-closed extensions of modal logic---the two might-operator logics $\ML(\triangledown)$ and $\ML(\DotDiamond)$. The logics $\ML(\subseteq)$ and $\ML(\triangledown)$ were shown to be expressively complete for the same class of properties and hence expressively equivalent in \cite{hella2015}; we reviewed and refined this result and showed that the new variant $\ML(\DotDiamond)$ with the singular might operator $\DotDiamond$ is likewise expressively complete for this class of properties. We also provided axiomatizations for $\ML(\triangledown)$ and $\ML(\DotDiamond)$. Note that one can obtain expressive completeness results, axiomatizations, and completeness proofs for the propositional variants of the might-operator logics via a straightforward adaptation of the results in this article.

All our axiomatizations are presented in natural deduction style---as opposed to the Hilbert style commonly used for modal logics---mainly because we do not have an {\em implication} connective in the languages of the logics we consider. To turn our natural deduction rules into Hilbert-style axioms, one could consider extending the languages with an implication. The implication would have to preserve union closure and the empty team property, and presumably also satisfy other desiderata for an implication such as the deduction theorem. Whether such a team-based implication exists is currently unknown. 
To better study the proof-theoretic properties of these logics, it would also be desirable to introduce sequent calculi for them. To this end, finding more elegant versions of the proof systems presented in this article might be useful. A point of difficulty could be that the logics do not admit uniform substitution, although sequent calculi have been developed for other team-based logics---see, e.g., \cite{frittella2016,ChenMa,Muller}.  

We observed in Section \ref{section:expressive_power} that with the strict semantics for the diamond, the three logics are not invariant under the notion of team bisimulation established in the literature. This raises the question whether one can formulate a relation between models with teams---a \emph{strict} team bisimulation---that would be strong enough to ensure invariance with respect to strict semantics but that would still respect the local character of modal logic and not imply first-order invariance.

As we now have a better understanding of the logical properties of these three union closed team-based modal logics, it is natural to ask about their possible applications in other fields. In connection with this, we note that certain closely related logics have recently found application in formal semantics. The two-sorted first-order team semantics framework in \cite{alonidegano} employs first-order inclusion atoms together with existential quantification to represent epistemic modalities in a manner similar to how the might operators function. In \cite{aloni2022}, the usual modal logic is extended with a \emph{nonemptiness atom} $\normalfont\textsc{ne}$ satisfied by all but the empty team; this logic (which is union closed but does not have the empty team property, and is in some ways very similar to the might-operator logics---see footnote \ref{footnote:NE}) is then used to account for \emph{free choice inferences} and related natural-language phenomena.


\begin{appendices}

\section*{Appendix
}\label{secA1}



In this appendix, we provide a translation of $\ML(\subseteq)$ into first-order inclusion logic ($FO(\subseteq)$). Before we do so, however, let us issue a note of caution. One way to utilize the standard translation of $\ML$ into classical first-order logic is to derive the compactness of $\ML$ from that of first-order logic. The third author's \cite{yang2017}, similarly, provides a translation from modal dependence logic into first-order dependence logic, and uses this translation and the fact that first-order dependence logic is compact \cite{puljujärvi2022compactness} to conclude that modal dependence logic is likewise compact. The presentation in \cite{yang2017} is erroneous because it conflates compactness formulated in terms of satisfiability (if each finite subset of $\Gamma$ is satisfiable, then $\Gamma$ is satisfiable) with compactness formulated in terms of entailment (if $\Gamma\models \phi$, then there is a finite $\Gamma_0\subseteq \Gamma$ such that $\Gamma_0\models \phi$). These notions need not coincide for a logic that is not closed under classical negation, and, indeed, while first-order dependence logic is satisfiability-compact, it is not entailment-compact. The same applies to $FO(\subseteq)$, which is shown to be satisfiability-compact in  \cite{puljujärvi2022compactness}, but which is not entailment-compact. 
\footnote{To see why, consider the $FO(\subseteq)$-sentence $\phi_{\textsf{nwf}}:=\exists x\exists y (x\subseteq y \land x <y)$ where $<$ is a binary relation symbol. Now $\phi_{\textsf{nwf}}$ expresses that there is an infinite descending $<$-chain, i.e., $<$ is not well-founded (see, e.g., \cite{yang2020} for details). Let $\phi_{\textsf{lo}}$ be a first-order sentence expressing that  $<$ is a strict linear order with a (unique) greatest element. For each $n\in \mathbb{N}$, define the first-order sentence $\phi_n:= \exists! z\forall x((x= z \lor x<z)\wedge \exists y_1\dots \exists y_n(y_n<\dots <y_1<z))$, expressing that there is a $<$-chain of length $n$ below the greatest element.
Thus $\{\phi_{\textsf{lo}},\phi_n\mid n \in \mathbb{N}\}\models \phi_{\textsf{nwf}}$, but the entailment does not hold for any finite subset of $\{\phi_{\textsf{lo}},\phi_n\mid n \in \mathbb{N}\}$.
}
 One ought not, therefore, use the translation in this section to argue that $\ML(\subseteq)$ is entailment-compact. As noted in Section \ref{section:expressive_power}, however, we can derive both the satisfiability- and the entailment-compactness of $\ML(\subseteq)$ (as well as those of its variants, and those of modal dependence logic) from the fact, proved in \cite{kontinen2015}, that team properties invariant under bounded bisimulation can be expressed in classical first-order logic.

We briefly recall the syntax and semantics of $FO(\subseteq)$; for detailed discussion of the logic, see., e.g., \cite{galliani2012}. Fix an infinite set $\mathsf{Var}$ of first-order variables, and a first-order vocabulary $\tau$. The set of first-order $\tau$-terms is defined as usual.
%
%
 The syntax for $FO(\subseteq)$  over 
 $\tau$ is given by:
\begin{align*}
   \phi ::= &t_1=t_2\mid R(t_1,\dots,t_n)\mid \vec{x}\subseteq \vec{y}\mid\neg t_1=t_2\mid \neg R(t_1,\dots,t_n)\mid\\
   &(\phi \lor \phi) \mid(\phi \land \phi) \mid 	\exists x \phi\mid \forall x\phi,
\end{align*}
where each $t_i$ is a $\tau$-term, 
 $R$ is an $n$-ary relation symbol in $\tau$, and $\vec{x}$ and $\vec{y}$ are two finite sequences of variables of the same length. 
Define, as usual, $\bot:=\forall x (\neg x=x)$ and $\top:=\forall x (x=x)$.

Let $\mathcal{M}$ be a $\tau$-model with domain $W$. An assignment over $W$ with domain $\mathsf{V}\subseteq \mathsf{Var}$ is a function $s: \mathsf{V}\to W$. We abbreviate $s(\vec{x}):=\langle s(x_1),\ldots, s(x_n)\rangle$, where $\vec{x}=\langle x_1,\ldots, x_n\rangle$. Given $a\in W$, the modified assignment 
$s(a/x)$ is defined as 
$s(a/x)(y):=a$ for $y=x$, and $s(a/x)(y):=s(y)$ otherwise. Given $\mathsf{V}\subseteq{Var}$, a (first-order) team $X$ of $\mathcal{M}$ with domain $\mathsf{V}$ is a set of assignments $s: \mathsf{V}\to W$. Given $A\subseteq W$ and a team $X$ of $\mathcal{M}$, we let $X(A/x):=\{s(a/x)\mid s\in X,a\in A\}$.

%

The team semantics of $FO(\subseteq)$ is given by (here we follow the convention for first-order team-based logics and write the team on the right-hand side of the satisfaction symbol):
\begin{align*}
%
\mathcal{M}\models_X \alpha \iff  &\text{ for all }s\in X, \mathcal{M}\models_s \alpha, \text{ if  $\alpha$  is a first-order literal;} \\
\mathcal{M}\models_X \vec{x}\subseteq \vec{y} \iff &\text{ for all } s\in X \text{ there exists } s'\in X \text{ such that }s({\vec{x}})=s'({\vec{y}});\\
\mathcal{M}\models_X \phi\lor\psi\iff &\text{ there exist }Y,Z\subseteq X\text{ such that }X = Y\cup Z, \mathcal{M}\models_Y \phi\text{  and }\mathcal{M}\models_Z \psi;\\
\mathcal{M}\models_X \phi\land\psi \iff  &\mathcal{M}\models_X \phi\text{ and }\mathcal{M}\models_X \psi;\\
\mathcal{M}\models_X \exists x \phi \iff &\text{  there exists a function }F : X \to \mathcal{P}(W)\setminus\{\emptyset\}\text{ such that }\\
&\mathcal{M}\models_{X(F/x)}\phi,\text{ where }X(F/x): = \{s(a/x) \mid s\in X, a\in  F(s)\};\\
\mathcal{M}\models_X \forall x \phi \iff  &\mathcal{M}\models_{X(W/x)}\phi
\end{align*}

It is easy to check that all formulas of $FO(\subseteq)$ are union closed and have the empty team property, and that formulas without inclusion atoms (formulas of $FO$) are additionally downward closed and flat (the first-order versions of these notions are defined analogously to the modal versions).

\begin{definition}\label{standard_translation}
For any formula $\phi$ in $\ML(\subseteq)$, its standard translation $ST_x(\phi)$ into first-order inclusion logic $FO(\subseteq)$ (with respect to the first-order variable $x$) is defined inductively as follows:
\begin{align*}
    ST_x(p)&:=Px \\
     ST_x(\bot)&:=\bot \\
      ST_x(\neg\alpha)&:= \neg ST_x(\alpha)\\
       ST_x(\phi\land\psi)&:= ST_x(\phi)\land ST_x(\psi)\\
ST_x(\phi\lor\psi)&:= ST_x(\phi)\lor ST_x(\psi) \\
    ST_x(\Diamond\phi)&:=\exists y(xRy \land ST_y(\phi)) \\
        ST_x(\Box\phi)&:=\forall y(\neg xRy \lor (xRy\land ST_y(\phi))) \\
    ST_x(\mathsf{a} \subseteq \mathsf{b})&:= \bigwedge_{\mathsf{z}\in \{\top,\bot\}^{|\mathsf{a}|}}(ST_x(\lnot \mathsf{a}^\mathsf{z})\lor \exists y(y\subseteq x \land ST_y(\mathsf{b}^\mathsf{z}))
\end{align*}
\end{definition}

For $\mathsf{X}\subseteq \mathsf{Prop}$, let $\sigma_\mathsf{X}$ be the first-order signature containing a binary relation symbol $R_0$ and a unary relation symbol $P$ for each $p\in \mathsf{X}$. There is a one-to-one correspondence between Kripke models over $\mathsf{X}$ and first-order $\sigma_\mathsf{X}$-structures: $M=(W,R,V)$ over $\mathsf{X}$ corresponds to the $\sigma_\mathsf{X}$-structure $\mathcal{M}=(W,R^{\mathcal{M}}_0,\{P^{\mathcal{M}}\}_{p\in\mathsf{X}})$, where $P^{\mathcal{M}}=V(p)$ is the interpretation of each unary $P$, and where $R^{\mathcal{M}}_0=R$.

We conclude with a lemma establishing that the translation functions as desired.

\begin{lemma}\label{lemma:compactnesslemma}
Let $\phi$ be a formula in modal inclusion logic $\ML(\subseteq)$. For any Kripke model $M$ and team $T$ of $M$, and any variable $x$, 
\begin{equation*}
M,T \models\phi\iff \mathcal{M} \models_{T_x}ST_x(\phi),    
\end{equation*}
where $T_x=\{\{(x,w)\} | w \in T\}$ is a first-order team with domain $\{x\}$.
\end{lemma}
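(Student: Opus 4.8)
The plan is to prove the equivalence by induction on the structure of $\phi$, with inclusion atoms treated as atomic. Three preliminary observations make the induction run smoothly. (1) The map $T\mapsto T_x=\{\{(x,w)\}\mid w\in T\}$ is a bijection between teams of $M$ and first-order teams of $\mathcal{M}$ with domain $\{x\}$; under it subteams correspond to subteams and $T_1\cup T_2=T$ corresponds to $(T_1)_x\cup(T_2)_x=T_x$. (2) Each $ST_x(\phi)$ has $x$ as its only free variable (the variable $y$ introduced in the $\Diamond$/$\Box$-clauses may always be chosen fresh and is bound there), so by the Locality property of $FO(\subseteq)$ the satisfaction of $ST_x(\phi)$ at a first-order team depends only on that team's restriction to $\{x\}$, and likewise $ST_y(\psi)$ depends only on the restriction to $\{y\}$. (3) For classical $\alpha$, $ST_x(\alpha)$ is a first-order formula (we tacitly put it in negation normal form to fit the official syntax of $FO(\subseteq)$, which is harmless since negation distributes over the classical connectives); by the correctness of the ordinary standard translation for $\ML$ and the flatness of first-order formulas, $M,T\models\alpha\iff\mathcal{M}\models_{T_x}ST_x(\alpha)$. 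Observation (3) settles $\phi=p$, $\phi=\bot$ and $\phi=\neg\alpha$ at once.

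For the connectives, $\phi=\psi_1\wedge\psi_2$ is immediate from the induction hypothesis, and $\phi=\psi_1\vee\psi_2$ follows because by (1) team-splittings of $T$ correspond exactly to team-splittings of $T_x$. The two modal cases reflect the lax semantics of the diamond. For $\phi=\Diamond\psi$: given $TRS$ with $M,S\models\psi$, put $F(\{(x,w)\})=\{v\in S\mid wRv\}$, which is nonempty since $T\subseteq R^{-1}[S]$; then $T_x(F/y)$ satisfies $xRy$, its restriction to $\{y\}$ equals $S_y$ (using $S\subseteq R[T]$), and it satisfies $ST_y(\psi)$ by (2) and the induction hypothesis, so $\mathcal{M}\models_{T_x}\exists y(xRy\wedge ST_y(\psi))$; conversely, from any witnessing $F$ the team $S:=\bigcup_{w\in T}F(\{(x,w)\})$ satisfies $S\subseteq R[T]$ and $T\subseteq R^{-1}[S]$, hence $TRS$, and $M,S\models\psi$ by (2) and the induction hypothesis. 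For $\phi=\Box\psi$: in $\mathcal{M}\models_{T_x(W/y)}\neg xRy\vee(xRy\wedge ST_y(\psi))$ the literal constraints $\neg xRy$ and $xRy$ together with coverage force the splitting to be the unique one separating the pairs $(w,v)$ with $\neg\, wRv$ from those with $wRv$, so the condition reduces to $\mathcal{M}\models_Z ST_y(\psi)$ where $Z$ consists of the latter pairs; its restriction to $\{y\}$ is $(R[T])_y$, so by (2) and the induction hypothesis this is $M,R[T]\models\psi$, i.e.\ $M,T\models\Box\psi$. (The degenerate subcases $W=\emptyset$ and $R[T]=\emptyset$ are absorbed by the empty team property.)

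The remaining case, the inclusion atom $\phi=\mathsf{a}\subseteq\mathsf{b}$, is the one I expect to require the most care, and is the main obstacle. The strategy is to reduce it to primitive atoms exactly along the lines the translation is built: by Definition \ref{standard_translation}, $ST_x(\mathsf{a}\subseteq\mathsf{b})$ is the conjunction over $\mathsf{z}\in\{\top,\bot\}^{|\mathsf{a}|}$ of $ST_x(\neg\mathsf{a}^\mathsf{z})\vee\exists y(y\subseteq x\wedge ST_y(\mathsf{b}^\mathsf{z}))$, while by Lemma \ref{lemma:inclusion_atom_reduction} \ref{lemma:inclusion_atom_reduction_i} we have $\mathsf{a}\subseteq\mathsf{b}\equiv\bigwedge_{\mathsf{z}}(\neg\mathsf{a}^\mathsf{z}\vee\mathsf{z}\subseteq\mathsf{b})$. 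Since the $\wedge$-, $\vee$- and classical cases are already in hand (the last applying to each $\neg\mathsf{a}^\mathsf{z}$) and using (1), the whole case reduces to one new identity: for every team $U$ of $M$ and every $\mathsf{z}$, $M,U\models\mathsf{z}\subseteq\mathsf{b}$ iff $\mathcal{M}\models_{U_x}\exists y(y\subseteq x\wedge ST_y(\mathsf{b}^\mathsf{z}))$. To establish it, unwind the $\exists$- and $\subseteq$-clauses: a choice function $F$ on $U_x$ witnessing the right-hand side must take all its values inside $U$ — this is exactly what $y\subseteq x$ demands, since the $x$-values occurring in $U_x(F/y)$ are precisely the elements of $U$ — and, since $\mathsf{b}^\mathsf{z}=\beta_1^{z_1}\wedge\cdots\wedge\beta_n^{z_n}$ is classical, $\mathcal{M}\models_{U_x(F/y)}ST_y(\mathsf{b}^\mathsf{z})$ says (by flatness and the classical standard translation) that every value of $F$ satisfies $\mathsf{b}^\mathsf{z}$; hence such an $F$ exists iff $U=\emptyset$, or $U\neq\emptyset$ and some $v\in U$ has $M,\{v\}\models\mathsf{b}^\mathsf{z}$ (take $F$ constantly $\{v\}$) — which by Proposition \ref{prop:inclfact} is exactly $M,U\models\mathsf{z}\subseteq\mathsf{b}$. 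Putting this together with the already-proved connective cases yields the inclusion atom case and completes the induction. The genuine difficulty is concentrated in this last identity: one has to match the ``true somewhere in the team'' reading of a primitive inclusion atom (Proposition \ref{prop:inclfact}) against the joint behaviour of the existential function quantifier and the first-order inclusion atom $y\subseteq x$, all while tracking non-emptiness and which variables each subformula genuinely depends on.
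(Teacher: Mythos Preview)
Your proof is correct and follows essentially the same approach as the paper's: an induction on $\phi$, with the inclusion atom case reduced via Lemma~\ref{lemma:inclusion_atom_reduction}~\ref{lemma:inclusion_atom_reduction_i} to the primitive-atom identity $M,U\models\mathsf{z}\subseteq\mathsf{b}\iff\mathcal{M}\models_{U_x}\exists y(y\subseteq x\wedge ST_y(\mathsf{b}^\mathsf{z}))$, which is then established using Proposition~\ref{prop:inclfact} and the constant choice function $F\equiv\{v\}$. The only difference is that you spell out the $\Diamond$ and $\Box$ cases explicitly (and correctly), whereas the paper defers these to \cite{luck}; your treatment of the $\Box$ case via the forced splitting is slightly cleaner than a naive verification.
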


\begin{proof}
The proof is by induction on $\phi$; we refer to \cite{luck} and add the case for $\mathsf{a}\subseteq\mathsf{b}$.

We want to show $M,T\models \mathsf{a}\subseteq \mathsf{b}\iff \mathcal{M},T_x\models \bigwedge_{\mathsf{z}\in \{\top,\bot\}^{|\mathsf{a}|}}(ST_x(\lnot \mathsf{a}^\mathsf{z})\lor \exists y(y\subseteq x \land ST_y(\mathsf{b}^\mathsf{z}))$. By Lemma \ref{lemma:inclusion_atom_reduction} \ref{lemma:inclusion_atom_reduction_i}, we have $\mathsf{a}\subseteq\mathsf{b}\equiv \bigwedge_{\mathsf{z}\in\{\top,\bot\}^{|\mathsf{a}|}}(\neg\mathsf{a}^\mathsf{z}\lor \mathsf{z}\subseteq\mathsf{b})$. It therefore suffices to show that for any $\mathsf{z}\in \{\top,\bot\}^{|\mathsf{a}|}$, $M,T\models \mathsf{z}\subseteq \mathsf{b}\iff \mathcal{M}\models_{T_x}\exists y(y\subseteq x \land ST_y(\mathsf{b}^\mathsf{z}))$, for then the result follows by the other induction cases. Let $\mathsf{z}\in \{\top,\bot\}^{|\mathsf{a}|}$.

The case in which $T=\emptyset$ is trivial. Suppose that $T\neq\emptyset$ whence also $T_x\neq \emptyset$. For the left-to-right direction, by Proposition \ref{prop:inclfact} there is a $w\in T$ such that $\{w\}\models\mathsf{b}^\mathsf{z}$. By the induction hypothesis, $\mathcal{M}\models_{\{w\}_x}ST_x(\mathsf{b}^\mathsf{z})$. It is easy to verify that this is equivalent to $\mathcal{M}\models_{T_x (\{w\}/y)}ST_y(\mathsf{b}^\mathsf{z})$. 
Define a function $F:T_x\rightarrow \mathcal{P}(W)\setminus\{\emptyset\}$ by $F(s)=\{w\}$  for all $s\in T_x$. Clearly $\mathcal{M}\models_{T_x (F/y)}ST_y(\mathsf{b}^\mathsf{z})$. Since $w\in T$, $\mathcal{M}\models_{T_x (F/y)}y\subseteq x$, whence $\mathcal{M}\models_{{T_x}}\exists y(y\subseteq x\land ST_y(\mathsf{b}^\mathsf{z}))$.

For the other direction, we have that there is a function $F:T_x\rightarrow \mathcal{P}(W)\setminus\{\emptyset\}$, such that $\mathcal{M}\models_{T_x (F/y)}(y\subseteq x\land ST_y(\mathsf{b}^\mathsf{z}))$. We have that $T_x\neq \emptyset$ whence also $T_x (F/y)\neq \emptyset$ so let $s\in T_x (F/y)$. By $\mathcal{M}\models_{T_x (F/y)}y\subseteq x$, there is a $s'\in T_x (F/y)$ with $s(y)=s'(x)$. We must have $s'(x)=w \in T$; then $s=s''(w/y)$ for some $s''\in T_x$. By downward closure, $\mathcal{M}\models_{\{s''(w/y)\}}ST_y(\mathsf{b}^\mathsf{z})$. It follows that 
$\mathcal{M}\models_{\{w\}_y}ST_y(\mathsf{b}^\mathsf{z})$. By the induction hypothesis, $M,\{w\}\models \mathsf{b}^\mathsf{z}$, so that by Proposition \ref{prop:inclfact}, $M,T\models \mathsf{z}\subseteq\mathsf{b}$.
\qedhere


\end{proof}




\end{appendices}



\bibliographystyle{sn-basic} 

\section*{Declarations}
\textbf{Competing interests} The authors have no competing interests to declare that are relevant to the content of this article.

\bibliography{sn-bibliography}

\newpage

\backmatter




\end{document}